\theoremstyle{plain} 
\newtheorem{theorem}{\indent\sc Theorem}[section]
\newtheorem{lemma}[theorem]{\indent\sc Lemma}
\newtheorem{corollary}[theorem]{\indent\sc Corollary}
\newtheorem{proposition}[theorem]{\indent\sc Proposition}
\theoremstyle{definition} 
\newtheorem{definition}[theorem]{\indent\sc Definition}
\newtheorem{remark}[theorem]{\indent\sc Remark}
\newcommand{\vp}{\varphi}
\newcommand{\ve}{\varepsilon}
\newcommand{\del}{\partial}
\newcommand{\delbar}{\overline{\partial}}
\newcommand{\ddbar}{\partial\overline{\partial}}
\begin{document}

\pagestyle{plain}
\thispagestyle{plain}

\title[Pluripotential geometry on semi-positive effective divisors]
{Pluripotential geometry on semi-positive effective divisors of numerical dimension one}

\author[Takayuki KOIKE]{Takayuki KOIKE$^{1}$}
\address{ 
$^{1}$ Department of Mathematics \\
Graduate School of Science \\
Osaka Metropolitan University \\
3-3-138 Sugimoto \\
Osaka 558-8585 \\
Japan 
}
\email{tkoike@omu.ac.jp}

\renewcommand{\thefootnote}{\fnsymbol{footnote}}
\footnote[0]{ 
2020 \textit{Mathematics Subject Classification}.
Primary 32J25; Secondary 32U05, 14C20, 32D15. 
}
\footnote[0]{ 
\textit{Key words and phrases}.
Semi-positive line bundles; Numerical dimension one; 
Ueda theory; Demailly--Nemirovski--Fu--Shaw type functions; Levi-flat hypersurfaces; Hartogs extension phenomenon.
}
\renewcommand{\thefootnote}{\arabic{footnote}}

\begin{abstract}
We study the complex-analytic geometry of semi-positive holomorphic line bundles on compact Kähler manifolds. 
In one of our main results, for a $\mathbb{Q}$-effective line bundle satisfying a natural torsion-type assumption, 
we show the equivalence between semi-positivity and semi-ampleness. 
More generally, for an effective nef divisor of numerical dimension one, 
we characterize the semi-positivity of the associated line bundle 
in terms of the existence of a certain type of pseudoflat fundamental system of neighborhoods of the support. 
Furthermore, for an effective semi-positive divisor, 
we prove a dichotomy: either the divisor is the pull-back of a $\mathbb{Q}$-divisor by a fibration onto a Riemann surface, 
or the Hartogs extension phenomenon holds on the complement of its support. 
Our proof is based on a pluripotential method that has previously been used for studying the boundaries of pseudoconvex domains, 
which allows us to investigate the complex-analytic structure of neighborhoods of the support of the divisor even when the manifold is non-compact.
\end{abstract}

\maketitle


\section{Introduction}

Let $X$ be a compact Kähler manifold of complex dimension $n$ and $L$ be a holomorphic line bundle on $X$. 
The purpose of this paper is to investigate the complex-analytic geometric structure of $X$ that is governed by $L$ when it is \emph{semi-positive}, namely when it admits a $C^\infty$ Hermitian (fiber) metric whose Chern curvature form is positive semi-definite at every point of $X$. 

When $L$ is the determinant bundle $K_X$ of the holomorphic cotangent bundle of $X$ (the \emph{canonical bundle}) 
or its dual $K_X^{-1}$ (the \emph{anti-canonical bundle}), 
this type of problem has long been a guiding theme in the theory of compact Kähler manifolds. 
Indeed, 
a wide range of studies has been actively pursued toward the classification of manifolds under the assumptions that $K_X$ or $K_X^{-1}$ is semi-positive, or satisfies its numerical weak analogue, namely the condition of being \emph{nef}. 
For the canonical bundle $K_X$, such investigations are developed in connection with the \emph{abundance conjecture} (see e.g., \cite[Chapter V]{Nakayama04} and \cite[\S 19]{Demailly}; for the recent progress we refer to Remark \ref{rmk:qeffopen}, see also the introduction of \cite{La}), while for the anti-canonical bundle $K_X^{-1}$, related results have been obtained in \cite{HL} for surfaces and \cite{LMPTX} for threefolds, see also \cite{DPS94, DPS, De2015, C, CH, CCM, MWa, GKP, MW, MWWZ} for the results concerning the structure theorems for manifolds with nef $K_X^{-1}$. For the case where $K_X$ is flat (i.e., when $X$ is Calabi--Yau), we refer the reader to \cite{T} for related developments. 

On the other hand, from a broader viewpoint, one may regard these types of problems as part of a long-standing theme in complex-analytic geometry:
understanding how the plurisubharmonicity of (local) weight functions determines the geometry of the underlying manifold $X$. 
In this direction, Kodaira's embedding theorem---asserting that positivity of $L$ implies its ampleness---should be regarded as a milestone in the study of compact $X$. 
Thus, beyond the canonical or anti-canonical setting, 
it seems natural to ask what kind of complex-analytic geometric structures arise from a generally given holomorphic line bundle $L$ when it is semi-positive. 

Motivated by the recent strategy developed in the study of the abundance conjecture, 
we study how semi-positivity of $L$ governs the geometry
according to the \emph{numerical dimension}
\[
 {\rm nd}(L)
 := \max \bigl\{ k\in \{0, 1, 2, \dots, n\} \;\big|\;
   (c_1^{\mathbb{R}}(L))^{\wedge k} \not= 0 \text{ in } H^{2k}(X,\mathbb{R}) \bigr\},  
\]
where $c_1^{\mathbb{R}}(L)$ denotes the image of the first Chern class $c_1(L)$ of $L$ 
by the natural map $H^2(X, \mathbb{Z}) \to H^2(X, \mathbb{R})$ between the cohomology groups. 
Classical theorems correspond to the two extreme cases:
for ${\rm nd}(L)=n$, the situation corresponds to the characterization of the \emph{bigness} \cite{De1985} (see also \cite{DP} and \cite[(6.19)]{Demailly}), 
while for ${\rm nd}(L)=0$, a theorem of Kashiwara on the characterization of the \emph{unitary flatness} (see \cite[\S 1]{U}). 
In the context of the abundance conjecture, the cases ${\rm nd}(K_X)=0$ and ${\rm nd}(K_X)=n$ are completely settled \cite{Ka2, S, Nakayama04}, 
and the case ${\rm nd}(K_X)=1$ is currently under intensive study. 
Even under the additional assumption that $L$ admits non-zero holomorphic sections (after some tensor power), 
the problem seems to still remain unsolved (see Remark \ref{rmk:qeffopen}). 
In this paper, in view of the above circumstances on the abundance conjecture, we focus on the case where $L$ coincides with the holomorphic line bundle $[D]$ associated with an effective divisor $D$ and satisfies ${\rm nd}(L)=1$. 

In this setting, inspired by Brunella's work \cite{B} on certain rational surfaces as a model case, 
we previously investigated the complex-analytic geometric structure of a neighborhood of a (reduced) complex hypersurface $Y$ in $X$ when $L=[Y]$ in \cite{Ko2015, Ko2017, Ko2020, Ko2021, Ko2022, Ko2023, Ko2024, Ko2024delbar, Ko20242}, 
in connection with \emph{Ueda theory}~\cite{U}. 
In that framework, the problems were formulated in relation to the conjectures proposed in \cite[Conjecture 1.1]{Ko2021} and \cite[Conjecture 2.1]{Ko2023}, 
where the study was mainly based on pluripotential methods on compact Kähler manifolds and complex dynamical methods. 
When $Y$ is smooth, a positive result confirming these conjectures was obtained in \cite[Corollary 1.5]{Ko2024}. 

However, these previous results were restricted to the case where $Y$ is a smooth hypersurface or a curve with simple singularities (such as nodes) in a surface, 
as the dynamical approach based on Ueda theory is difficult to extend beyond the non-singular setting. 
In the present paper, we introduce a pluripotential-geometric approach inspired by \cite{N} and \cite{FS}, 
which allows us to treat general effective divisors. 
The following theorem presents our first main result in the present paper, which is a generalization of \cite[Corollary 1.5]{Ko2024}.

\begin{theorem}\label{thm:main_gen}
Let $X$ be a compact Kähler manifold and $D$ be an effective nef divisor on $X$ such that ${\rm nd}([D])=1$. Denote by $Y$ the support of $D$. Then the following conditions are equivalent: \\
$(i)$ The line bundle $[D]$ is semi-positive. \\
$(ii)$ There exists a neighborhood $V$ of $Y$ such that the restriction $[D]|_V$ is unitary flat. \\
$(iii)$ There exist a covering $\{V_j\}$ of $Y$ by open subsets of $X$ and holomorphic functions $s_j\colon V_j \to \mathbb{C}$ whose zero sets coincide with $V_j\cap Y$, such that $|s_j/s_k|\equiv 1$ on each intersection $V_j\cap V_k$. 
\end{theorem}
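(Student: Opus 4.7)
The plan is to prove $(ii)\Leftrightarrow(iii)$ tautologically, $(iii)\Rightarrow(i)$ by a gluing construction, and to invest the bulk of the argument in $(i)\Rightarrow(iii)$, where the pluripotential-geometric method highlighted in the introduction is the key tool. The equivalence $(ii)\Leftrightarrow(iii)$ is a rephrasing: a trivializing frame $e_j$ of $[D]|_{V_j}$ is related to the canonical section $\sigma$ of $[D]$ by $\sigma=s_j\cdot e_j$, where $s_j$ is a holomorphic function cutting out $Y\cap V_j$, and the transition functions of $[D]|_V$ in these frames have unit modulus precisely when $|s_j/s_k|\equiv 1$.

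For $(iii)\Rightarrow(i)$, the data of $(iii)$ define a smooth flat Hermitian metric $h_0$ on $[D]|_V$, where $V:=\bigcup_j V_j$, by the consistent prescription $|\sigma|_{h_0}:=|s_j|$ on $V_j$. Since $[D]$ is nef, Demailly's regularization yields, for every $\varepsilon>0$, a smooth metric $h_\varepsilon$ on $[D]$ with $\Theta_{h_\varepsilon}\geq -\varepsilon\omega$; writing $h_\varepsilon = h_0\cdot e^{-\psi}$ on $V$ for a smooth $\psi$ and applying a regularized-max gluing between $h_0$ and a suitably shifted $h_\varepsilon$ on a collar inside $V$ produces a smooth Hermitian metric on $[D]$ with curvature $\geq 0$ everywhere.

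For the substantive direction $(i)\Rightarrow(iii)$, fix a smooth metric $h$ on $[D]$ with $\Theta_h\geq 0$. The hypothesis ${\rm nd}([D])=1$ gives
\[
\int_X \Theta_h\wedge\Theta_h\wedge\omega^{n-2}=c_1([D])^2\cdot[\omega]^{n-2}=0,
\]
and since $\Theta_h\wedge\Theta_h$ is a positive $(2,2)$-form and $\omega^{n-2}$ is strictly positive, the integrand vanishes pointwise. Equivalently, $\Theta_h$ has rank $\leq 1$ at every point of $X$. The function $u:=-\log|\sigma|_h$ is smooth and plurisubharmonic on $X\setminus Y$ with $dd^c u=\Theta_h$ and $u\to+\infty$ along $Y$; for $c$ large, the superlevel sets $\Omega_c:=\{u>c\}\cup Y$ form a pseudoconcave fundamental system of neighborhoods of $Y$, bounded by smooth real hypersurfaces $M_c:=\{u=c\}$ whose Levi form coincides, up to sign, with the restriction of $\Theta_h$ to $T^{1,0}M_c$, hence has rank $\leq 1$.

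The main obstacle is to upgrade the pointwise rank bound to the full Levi-flatness of each $M_c$ for $c$ sufficiently large---a strictly stronger statement, since the rank-one direction of $\Theta_h$ need not a priori be normal to the level sets of $u$. This is precisely where the pluripotential machinery inspired by Nemirovski \cite{N} and Fu--Shaw \cite{FS} enters: the family $\{M_c\}$ is a weakly pseudoconcave foliation of a one-sided neighborhood of the compact analytic set $Y$, and a rigidity argument in this context, combined with the global constraint $\Theta_h\wedge\Theta_h=0$ and the logarithmic asymptotics of $u$ along $Y$, forces the Levi form of $M_c$ with $c\gg 0$ to vanish identically. Once each such $M_c$ is Levi-flat, a neighborhood $V\supset Y$ admits a Levi-flat foliation by the level sets of $|\sigma|_h$; a local Ueda-type analysis (straightening the foliation via a holomorphic submersion onto a small disc transverse to $Y$ and using that any line bundle on a disc is holomorphically trivial) then produces, on each $V_j$, a holomorphic defining function $\tilde s_j$ of $Y\cap V_j$ with $|\tilde s_j|=|\sigma|_h$, whence $|\tilde s_j/\tilde s_k|\equiv 1$ on overlaps, giving $(iii)$.
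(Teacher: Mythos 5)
Your outline of the hard direction $(i)\Rightarrow(iii)$ stops exactly where the real work begins, and the step you do spell out at the end is incorrect as stated. First, the upgrade from ``$\Theta_h$ has rank $\le 1$'' to Levi-flatness of the level sets is not a soft ``rigidity argument'': the paper's mechanism is the Demailly--Nemirovski--Fu--Shaw function $F_{1,\omega}(s)=\int_{\{\psi=s\}}d^c\psi\wedge dd^c\psi\wedge\omega^{n-2}$, which is shown to be nonnegative and nondecreasing by Stokes, and whose limit as $s\to\infty$ is computed to be the intersection number $(D^2.\{\omega\}^{n-2})=0$; only then does a pointwise linear-algebra argument kill the Levi form. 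You invoke the right references but do not carry out (or even sketch) this monotonicity-plus-limit argument, which is the heart of the proof. Second, and more seriously, Levi-flatness of the hypersurfaces $\{\psi=s\}$ does \emph{not} yield local holomorphic functions $\tilde s_j$ with $|\tilde s_j|=|\sigma_D|_h$: that identity would force $dd^c\psi\equiv 0$ off $Y$, i.e.\ $\Theta_h\equiv 0$ near $Y$, which is not implied (Levi-flatness only kills $dd^c\psi$ on the complex tangent spaces of the level sets; the normal rank-one part survives). The paper must at this point run a dichotomy (Theorem \ref{thm:general}): either every leafwise-constant function on the level sets is constant, in which case the coefficient $f$ in $\sqrt{-1}\ddbar\psi=f\,\sqrt{-1}\del\psi\wedge\delbar\psi$ is a function of $\psi$ and one solves an ODE to reparametrize $\psi$ into a weight that is pluriharmonic off $Y$ (and controls its boundedness to get flatness of $[D]$ across $Y$); or a non-constant leafwise-constant function exists, in which case condition $(iii)$ is obtained not from flatness but from a global fibration $X\to R$ with $Y$ a fiber, which uses compactness of $X$ and \cite[Lemma 3.6]{Ko2024}. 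Your single-track ``straighten the foliation'' ending skips both branches.

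The easy directions also have gaps. $(iii)\Rightarrow(ii)$ is not a tautology: the $s_j$ are only required to have zero \emph{set} $V_j\cap Y$, so they define some effective divisor $D'$ with ${\rm Supp}\,D'=Y$ whose multiplicities may differ from those of $D$ (e.g.\ $D=2Y_1+3Y_2$ while $s_j$ cut out $Y_1+Y_2$); flatness of $[D']|_V$ gives flatness of $[D]|_V$ only after one proves $aD=bD'$, which is exactly Lemma \ref{lem:from_linalg}\,$(ii)$ and uses the numerical triviality of both $D$ and $D'$ along $Y$ (hence the hypotheses nef, ${\rm nd}=1$, and compact Kähler). For the same reason your prescription $|\sigma_D|_{h_0}:=|s_j|$ need not define a smooth metric on $[D]|_V$. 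Finally, your $(iii)\Rightarrow(i)$ gluing fails: a regularized maximum of the flat weight with the weight of a Demailly-regularized metric $h_\varepsilon$ only has curvature $\ge-\varepsilon\omega$ wherever $h_\varepsilon$ is active, so you do not get a semi-positive metric (indeed nef does not imply semi-positive, so no argument of this shape can work). The correct extension, as in Lemma \ref{lem:sp_local}, uses only that the flat weight tends to $+\infty$ along $Y$: compose with a convex increasing function that is constant below a level and extend by that constant; no nefness or regularization is needed.
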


It should be noted that, when condition $(iii)$ of Theorem \ref{thm:main_gen} holds, the real hypersurfaces 
$\{x\in V_j \mid |s_j(x)| = \varepsilon\}$ 
glue together, for sufficiently small positive numbers $\varepsilon$, 
to form a compact Levi-flat hypersurface in $X\setminus Y$. 
Consequently, the support $Y$ then admits a certain type of fundamental system of pseudoflat neighborhoods, 
which can be regarded as a functional-analytic feature of the neighborhood structure of $Y$ 
characterizing the existence of a semi-positive effective divisor $D$ of numerical dimension one with support $Y$.

To clarify the connection with the abundance conjecture, 
let us now consider the situation where $D=K_X$ (or more generally, a divisor of a similar nature appearing in the context of the generalized variant of this conjecture, see \cite[(19.16)]{Demailly} for example). 
In this case, when the numerical dimension is one, 
it can be naturally expected that the restriction $[mD]|_Y$ becomes holomorphically trivial for some positive integer $m$ (see Remark \ref{rmk_torsion_normalbundle} and \S \ref{section:obs_abundance}), 
and it is conjectured that the complete linear system $|mD|$ should yield a fibration on $X$ (\emph{semi-ampleness}, see also \cite[Theorem 1.1]{Kawamata1985}). 
In view of this, as a precise variant of Theorem \ref{thm:main_gen}, we next establish an equivalence between semi-positivity and semi-ampleness 
under the above torsion-type assumption, 
which generalizes our previous result \cite[Theorem 1.1 $(i)$]{Ko2022} for the case where $D=Y$ for a non-singular hypersurface $Y\subset X$. 

\begin{theorem}\label{thm:main_tor}
Let $X$ be a connected compact Kähler manifold and $D\not=0$ be an effective divisor on $X$ such that $[mD]|_Y$ is holomorphically trivial for some positive integer $m$, where $Y$ is the support of $D$. 
Then the following conditions are equivalent: \\
$(i)$ The line bundle $[D]$ is semi-positive. \\
$(ii)$ There exist a fibration (that is, a proper holomorphic surjection with connected fibers) $\pi\colon X\to R$ onto a compact Riemann surface $R$ and an effective divisor $D_R$ such that $\pi^*D_R = m' D$ holds as divisors on $X$ for some positive integer $m'$. 
\end{theorem}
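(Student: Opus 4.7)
The direction $(ii)\Rightarrow(i)$ is immediate: if $\pi^*D_R=m'D$ for a fibration $\pi\colon X\to R$ onto a compact Riemann surface with $D_R\not=0$ effective (which is forced by $D\not=0$), then $[D_R]$ has positive degree and is therefore ample; pulling back a positively curved Hermitian metric and taking the $m'$-th root gives a semi-positive metric on $[D]$.

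For $(i)\Rightarrow(ii)$, I first verify that ${\rm nd}([D])=1$ so that Theorem~\ref{thm:main_gen} applies. Writing $D=\sum a_iD_i$ with irreducible components $D_i$ and inclusions $\iota_i\colon D_i\hookrightarrow X$, the torsion hypothesis $[mD]|_Y\cong\mathcal{O}_Y$ yields $[mD]|_{D_i}\cong\mathcal{O}_{D_i}$ and hence $c_1([D]|_{D_i})=0$ in $H^2(D_i,\mathbb{R})$; by the projection formula $c_1([D])^2=\sum a_i\,(\iota_i)_*c_1([D]|_{D_i})=0$ in $H^4(X,\mathbb{R})$, so ${\rm nd}([D])\le 1$. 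The case ${\rm nd}([D])=0$ is excluded: it would force $[D]$ to be unitary flat by Kashiwara's theorem, so by the Lelong--Poincar\'e formula $\log|s_D|_h^2$ would be globally plurisubharmonic on $X$, hence constant by the maximum principle on the connected compact manifold $X$---contradicting that the zero locus of the canonical section $s_D$ is the nonempty divisor $D$. Therefore Theorem~\ref{thm:main_gen} yields a neighborhood $V$ of $Y$ on which $[D]|_V$, and so $[mD]|_V$, is unitary flat.

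I next upgrade unitary flatness of $[mD]|_V$ to triviality. The bundle is encoded by a character $\rho\colon\pi_1(V)\to U(1)$ whose restriction to $\pi_1(Y)$ vanishes by the torsion hypothesis. Shrinking $V$ to a regular neighborhood for which $Y$ is a deformation retract---possible by \L{}ojasiewicz's triangulability of analytic subvarieties---the inclusion-induced map $\pi_1(Y)\to\pi_1(V)$ is surjective, forcing $\rho=1$. Choosing a nowhere vanishing holomorphic frame $e$ of $[mD]|_V$, the canonical section satisfies $s_D^m=F\cdot e$ for a holomorphic function $F\colon V\to\mathbb{C}$ with $\mathrm{div}(F)=mD|_V$.

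The main obstacle is to globalize $F$ into a fibration $\pi\colon X\to R$. By Theorem~\ref{thm:main_gen}~$(iii)$, for all sufficiently small $\varepsilon>0$ the locus $\{|F|=\varepsilon\}$ is a compact Levi-flat hypersurface in $X$, whose complex leaves are precisely the level sets $F^{-1}(c)$ with $|c|=\varepsilon$; each such leaf is therefore a compact complex analytic hypersurface of $X$. The assignment $c\mapsto F^{-1}(c)$ then defines a holomorphic map from a small disk $\Delta_\varepsilon\subset\mathbb{C}$ into the Barlet space of effective cycles of $X$, and I expect the closure of its image to be a one-dimensional compact analytic subset whose normalization is the desired compact Riemann surface $R$; the tautological incidence then extends to a proper surjective fibration $\pi\colon X\to R$, and $\pi^*D_R=m'D$ follows by matching multiplicities along $F^{-1}(0)=mD|_V$. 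The essential difficulty is showing that the Barlet-space family extends past $\Delta_\varepsilon$ to cover all of $X$, since points far from $Y$ are a priori missed by the local construction around $Y$. Overcoming this requires a further global pluripotential argument exploiting the semi-positivity of $[D]$ on all of $X$ together with the compactness and connectedness of $X$---via a maximum-principle comparison between the global semi-positive metric and the local flat metric, any cluster value of the Barlet-space map at the boundary of $\Delta_\varepsilon$ remains a compact analytic hypersurface of $X$, and these cycles together exhaust $X$.
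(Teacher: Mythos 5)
Your direction $(ii)\Rightarrow(i)$ and the first half of $(i)\Rightarrow(ii)$ are sound and run parallel to the paper's Proposition \ref{prop:_maeno_main_tor}: you verify ${\rm nd}([D])=1$ from the torsion hypothesis, apply Theorem \ref{thm:main_gen} to obtain unitary flatness of $[D]|_V$ near $Y$, and upgrade to holomorphic triviality of $[mD]|_V$ via the vanishing of the monodromy character (the paper does this through Lemma \ref{lem:monodromy_corresp} and Lemma \ref{lem:scv_monodromy_onV} $(i)$). Two small repairs are needed there: the frame $e$ must be chosen \emph{flat} so that $F$ is leafwise constant and $\{|F|=\varepsilon\}$ is really a union of level sets of the foliation, and the proportionality between ${\rm div}(F)$ and $D$ (respectively the final identity $\pi^*D_R=m'D$) should be justified, which the paper obtains from Lemma \ref{lem:from_linalg} $(ii)$ rather than by ``matching multiplicities'' ad hoc; also the paper first treats connected ${\rm Supp}\,D$ and reduces the general case by the decomposition of Remark \ref{rmk:wlogwmaYconn}, a reduction your argument would need as well.

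The genuine gap is the globalization step, and you acknowledge it yourself. A priori the compact leaves $F^{-1}(c)$ sweep out only a small neighborhood of $Y$, so the Barlet-space map is defined only on a small punctured disc; nothing in your sketch shows that this family extends, that the relevant component of the cycle space is compact, that its image is one-dimensional with the stated normalization, that the fibres of the resulting map are connected, or that the cycles cover all of $X$ --- the phrase ``a maximum-principle comparison between the global semi-positive metric and the local flat metric'' is not an argument for any of these. This is precisely where the paper brings in a different and decisive input: after producing the proper map $p\colon V\to\Delta$ near $Y$, it notes that ${\rm Re}\,p$ is a non-constant leafwise-constant function on the Levi-flat level sets, so condition $(a)$ of Theorem \ref{thm:general} holds, and then Lemma \ref{lem:JDGlem36} --- which rests on \cite[Lemma 3.6]{Ko2024}, i.e.\ the Neeman--Totaro type argument that a moving family of disjoint smooth compact hypersurfaces in a compact K\"ahler manifold gives rise to a fibration onto a curve --- yields the global fibration $\pi\colon X\to R$ with $Y$ contained in a fibre. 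Without an input of this kind (or a complete execution of your cycle-space plan, including compactness of the cycle component and Stein factorization), the passage from the semi-local structure near $Y$ to statement $(ii)$ on all of $X$ remains unproven, so the proposal is incomplete at its central step.
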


As mentioned earlier, the proofs of the above results rely on a rather different approach from our previous works: 
In the present paper, we make use of certain pluripotential-theoretically defined functions that were introduced in the study of pseudoconvex domains. 
A framework for studying level–set Monge–Ampère expressions of the form
$d^c\rho \wedge (dd^c\rho)^{n-1}$ for a plurisubharmonic function $\rho$ appears in Demailly's work \cite{Demailly1987} on boundary Monge–Ampère measures and pluriharmonic measures. 
In Nemirovski's proof of the nonexistence theorem for two-dimensional Stein domains with compact Levi-flat boundaries 
admitting plurisubharmonic defining functions \cite{N}, 
a function of the form
\begin{equation}\label{eq:Nemirovski_func_original}
F(s):=\int_{\{\rho=s\}} d^c\rho \wedge dd^c\rho,
\end{equation}
was introduced independently and plays a key role in his argument, where $\rho$ is a defining function of a Stein domain. 
Related techniques were later employed by Fu and Shaw \cite{FS} 
in their study of the Diederich--Fornæss exponent. 
In our context, for a semi-positive line bundle $[D]$, 
we consider a variant of such functions on $X\setminus Y$ (\emph{Demailly--Nemirovski--Fu--Shaw type functions}, see \S \ref{subsection:conseq_nemi_type_func}) with respect to an appropriate plurisubharmonic function, 
and we show that its level sets are Levi-flat near $Y$. 

This line of argument underlying the above two main theorems 
may be viewed as a translation of the pluripotential-theoretic ideas 
appearing in \cite{Demailly1987, N, FS} 
into the complex-algebraic-geometric setting of semi-positive line bundles. 
Beyond this conceptual shift, however, the present approach possesses a decisive technical advantage over the previous methods used in \cite{Ko2024}: it enables us to treat non-compact ambient manifolds $X$ as well. 
In particular, this provides the first pluripotential framework that can address problems such as \cite[Conjecture 1.1]{Ko2021} and \cite[Conjecture 2.1]{Ko2023} even when $X$ is non-compact. 
Indeed, the two main theorems above are both derived 
from (a variant of) the following result, in which $X$ is not assumed to be compact. 

\begin{theorem}\label{thm:main_vgeneral}
Let $X$ be a Kähler manifold of dimension $n\geq 2$ and $D$ be an effective divisor on $X$ whose support $Y$ is compact and connected. 
Assume 
that $[D]$ is semi-positive, and that 
the intersection number $(c_1([D])^2.\ \alpha^{n-2})$ is zero for a Kähler class $\alpha$ of $X$.  
Then one of the following holds: \\
$(i)$ For every neighborhood $V$ of $Y$, 
there exists a smooth one-parameter family $\{\Gamma_\tau\}_{\tau\in I_V}$, 
where the index set $I_V\subset\mathbb{R}$ is a non-empty open interval, 
of non-singular compact complex hypersurfaces in $V\setminus Y$ 
such that $\Gamma_\tau$ and $\Gamma_{\tau'}$ are $C^\infty$-homotopic to each other in $V\setminus Y$ and $\Gamma_\tau\cap\Gamma_{\tau'}=\emptyset$ for all $\tau\not=\tau'\in I_V$. \\
$(ii)$ There exists a neighborhood $V$ of $Y$ such that $[D]|_V$ is unitary flat. 
\end{theorem}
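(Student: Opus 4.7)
The plan is to analyze the level-set structure of $\rho := -\log|s_D|^2_h$ on $X \setminus Y$, where $h$ is a smooth Hermitian metric on $[D]$ with semi-positive Chern curvature $\theta := c_1([D], h) \geq 0$ and $s_D$ is the canonical section with divisor $D$. Then $\rho$ is smooth and plurisubharmonic on $X\setminus Y$ with $dd^c\rho = \theta$, and $\rho \to +\infty$ along $Y$. As a preliminary step I would convert the numerical hypothesis into a pointwise rank restriction on $\theta$: interpreting $(c_1([D])^2\cdot\alpha^{n-2})$ via the Poincaré--Lelong formula and the compact support of $[D]$, the vanishing hypothesis together with the semi-positivity of $\theta$ should force the pointwise identity $\theta\wedge\theta\wedge\omega^{n-2} \equiv 0$ on a relatively compact neighborhood $W$ of $Y$ (with $\omega$ a Kähler form representing $\alpha$). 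Consequently $\theta$ has rank at most one at every point of $W$, and $\theta^{k}\equiv 0$ on $W$ for all $k\geq 2$.

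The heart of the argument will be a Demailly--Nemirovski--Fu--Shaw type function
\[
F(s) \;:=\; \int_{\{\rho=s\}} d^c\rho \wedge \omega^{n-1},
\]
defined for regular values $s$ of $\rho$ large enough that $\{\rho\geq s\}\subset W$. Stokes' theorem on the tube $\{s_1\leq\rho\leq s_2\}$ yields
\[
F(s_2)-F(s_1) \;=\; \int_{\{s_1\leq\rho\leq s_2\}}\theta\wedge\omega^{n-1} \;\geq\; 0,
\]
so $F$ is non-decreasing, and the dichotomy corresponds to its behaviour as $s\to\infty$. If $F$ is eventually constant on some interval $(s_0,+\infty)$, then $\theta\wedge\omega^{n-1}\equiv 0$, hence $\theta\equiv 0$, on the neighbourhood $V := \{\rho>s_0\}\cup Y$ of $Y$; the restriction $h|_V$ is then a flat Hermitian metric on $[D]|_V$, and standard arguments produce unitary transition functions, yielding condition $(ii)$.

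In the remaining case, $\theta$ does not vanish identically in any neighbourhood of $Y$. On the open set $\{\theta\neq 0\}$ one writes $\theta = i\alpha\wedge\bar\alpha$ locally for a $(1,0)$-form $\alpha$; the identity $d\theta=0$ then gives $d\alpha\wedge\alpha\wedge\bar\alpha = 0$, so Frobenius integrates the real null distribution of $\theta$ to a holomorphic foliation $\mathcal{F}$ by complex hypersurfaces. The rank-one condition forces $\rho$, in local Frobenius coordinates, to depend only on the transversal holomorphic variable, so $\rho$ is constant along each leaf of $\mathcal{F}$, and the regular level sets $\{\rho=s\}$ become Levi-flat hypersurfaces whose Levi foliation is (the restriction of) $\mathcal{F}$. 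Since the compact connected $Y$ must appear among the limit leaves of $\mathcal{F}$, I would argue that leaves of $\mathcal{F}$ sufficiently close to $Y$ are themselves compact complex hypersurfaces in $V\setminus Y$; selecting any real $1$-parameter arc of leaves transverse to $\mathcal{F}$ through $Y$ then produces the family $\{\Gamma_\tau\}_{\tau\in I_V}$ of pairwise disjoint, pairwise $C^\infty$-homotopic compact complex hypersurfaces required by condition $(i)$.

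The main obstacle I foresee lies in this last step: proving that the leaves of $\mathcal{F}$ near $Y$ are genuinely closed compact complex hypersurfaces rather than accumulating wildly, which is where the compactness and connectedness of $Y$, the exhaustion property of $\rho$, and the fact that $\rho$ is constant along leaves must be used in an essential way (likely through a Hartogs-type or continuity-principle argument for leaf extension). A secondary technical issue is the rigorous interpretation of the intersection hypothesis in the non-compact setting of $X$; the compact support of $[D]$ together with a cut-off and Stokes argument should suffice to reduce it to a finite integral over $Y$.
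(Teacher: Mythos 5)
Your first two steps are salvageable but under-argued, and the way you argue them matters. The numerical hypothesis is an intersection number localized on $Y$ (a pairing with the current $T_D$), so it does not directly give the pointwise identity $\theta\wedge\theta\wedge\omega^{n-2}\equiv 0$ on a neighborhood $W$ of $Y$; to propagate it off $Y$ you need the $k=1$ version of the level-set function, $F_{1,\omega}(s)=\int_{\{\rho=s\}}d^c\rho\wedge dd^c\rho\wedge\omega^{n-2}$, which is nonnegative, nondecreasing, and satisfies $F_{1,\omega}(s)=\langle T_D,\theta\wedge\omega^{n-2}\rangle-\int_{\{\rho\ge s\}}\theta^{2}\wedge\omega^{n-2}$ by Stokes; the hypothesis kills the first term, forcing $F_{1,\omega}\equiv 0$ and hence both $\theta^{\wedge 2}\equiv 0$ near $Y$ and, more importantly, Levi-flatness of the level sets. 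Note also that pointwise rank $\le 1$ of $\theta$ alone does \emph{not} imply that $\rho$ is constant along $\ker\theta$ (consider $\rho=|z_1|^2+\mathrm{Re}\,z_2$), so your inference ``rank one $\Rightarrow$ $\rho$ leafwise constant $\Rightarrow$ level sets Levi-flat'' is not valid as stated; the Levi-flatness comes from the vanishing of the $k=1$ integral restricted to the level sets, which your proposal never sets up (you only use the $k=0$ function).

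The genuine gap is your second branch. Your dichotomy is ``$\theta\equiv 0$ near $Y$ for the given metric'' versus ``not'', and in the second case you hope to get condition $(i)$ by proving that leaves of the kernel foliation near $Y$ are compact. This fails: if $[D]|_V$ is unitary flat with monodromy of infinite image, the leaves of the Levi foliation are dense in each level set and a small $V\setminus Y$ contains \emph{no} compact complex hypersurface at all (Lemma \ref{lem:scv_monodromy_onV} $(ii)$), while one can still choose a semi-positive metric whose curvature is nonzero arbitrarily close to $Y$ (e.g.\ compose a flat potential with a strictly convex increasing function); in that situation your branch 1 does not apply and your branch 2 cannot produce the $\Gamma_\tau$, yet the theorem's correct alternative is $(ii)$. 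The paper organizes the dichotomy differently (Theorem \ref{thm:general}): either some level set $\{\psi=s\}$ with $s$ arbitrarily large admits a nonconstant smooth function that is leafwise constant for the Levi foliation --- then Sard plus Ehresmann applied to that function yields the family $\Gamma_\tau$ as its regular fibers, which are closed unions of leaves and hence compact, with no need for individual leaves to be compact --- or every such function is constant, in which case the function $f$ defined by $\sqrt{-1}\ddbar\psi=f\cdot\sqrt{-1}\del\psi\wedge\delbar\psi$ depends only on $\psi$, and solving an ordinary differential equation produces a reparametrization $\widehat{\chi}\circ\eta$ that is pluriharmonic off $Y$ and bounded against $\log|\sigma|^2$, i.e.\ a flat metric on $[D]|_V$, giving $(ii)$ even though the original curvature does not vanish near $Y$. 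This flattening argument is the missing idea; without it the case of dense leaves is not covered by your proof.
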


Finally, let us turn our attention to the complex-analytic aspects of the present results when $X$ is a connected compact Kähler manifold. 
From the viewpoint of several complex variables, 
divisors $D$ with only minimal (semi-)positivity assumptions 
have also been studied in relation to $q$-convexity of the complement $X\setminus Y$, where $Y$ is the support of $D$, 
and to the Hartogs extension phenomenon on $X\setminus Y$ as a consequence of $L^2$-theoretic arguments.
For instance, Ohsawa \cite{O20122} proved that 
if $[D]$ admits a $C^\infty$ Hermitian metric whose curvature form is semi-positive on the Zariski tangent spaces of $Y$ everywhere and not identically zero at some point of $Y$, then the Hartogs extension holds on $X\setminus Y$. 
More recently, Feklistov \cite{F2025} characterized the Hartogs extension phenomenon in cohomological terms in the case where $D$ is effective and nef. 
In this direction, we obtain the following complementary statement for effective semi-positive divisors of general numerical dimension. 

\begin{theorem}\label{cor:main}
Let $X$ be a connected compact Kähler manifold and $D$ be an effective divisor with connected support $Y$ such that the line bundle $[D]$ is semi-positive. Then exactly one of the following two conditions holds. \\
$(i)$ There exists a fibration $\pi\colon X\to R$ onto a compact Riemann surface $R$ such that $\pi^*\{p\} = aD$ as divisors on $X$ for some point $p\in R$ and some positive rational number $a$. \\
$(ii)$ The Hartogs type extension theorem holds on $X\setminus Y$: i.e., for any connected neighborhood $V$ of $Y$ and any holomorphic function $f$ on $V\setminus Y$, there exists a holomorphic function $\widetilde{f}$ on $X\setminus Y$ such that $\widetilde{f}|_{V\setminus Y} = f$. 
\end{theorem}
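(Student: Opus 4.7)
The plan is to argue by cases on the numerical dimension $d := \mathrm{nd}([D]) \in \{0, 1, \dots, n\}$. Mutual exclusivity of $(i)$ and $(ii)$ is immediate: if $\pi\colon X \to R$ is as in $(i)$ and $g$ is a meromorphic function on $R$ holomorphic near $p$ but with a pole at some $q \neq p$, then $g\circ\pi$ is holomorphic on $\pi^{-1}(U) \setminus Y$ for a small disk $U\ni p$ but cannot extend across $\pi^{-1}(q) \subset X\setminus Y$; indeed, any such extension, by constancy on the compact connected fibers of $\pi$, would descend to a holomorphic extension of $g$ on $R\setminus\{p\}$, which is impossible.

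For the extreme cases: if $d=0$, semi-positivity and effectivity force $(D\cdot\alpha^{n-1})=0$ and hence $D=0$, so $Y$ is empty and $(ii)$ is vacuous. If $d \geq 2$, case $(i)$ is ruled out since the pullback of a divisor from a Riemann surface has numerical dimension at most one; for $(ii)$, the smooth function $\phi := -\log|s|_h^2$ is a psh exhaustion of $X\setminus Y$ with $dd^c\phi = \omega \geq 0$, and using $c_1([D])^2 \neq 0$ together with the deformation freedom $h \mapsto e^{-\psi}h$ subject to $\omega + dd^c\psi \geq 0$, one arranges the curvature to have rank at least two at some point of $Y$; Ohsawa's theorem \cite{O20122} then yields the Hartogs extension. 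Alternatively, invoke Feklistov's cohomological criterion \cite{F2025}.

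The main case is $d = 1$. Applying Theorem \ref{thm:main_gen} gives a neighborhood $V = \bigcup_j V_j$ of $Y$ and local holomorphic sections $s_j\colon V_j \to \mathbb{C}$ with zero set $V_j \cap Y$ and $|s_j/s_k| \equiv 1$ on overlaps; the transitions $\alpha_{jk} := s_j/s_k$ thus lie in $U(1)$, defining a monodromy representation $\rho\colon \pi_1(V) \to U(1)$. If $\mathrm{Im}(\rho)$ is finite of exponent $m$, then $\alpha_{jk}^m = 1$, so the $s_j^m$ patch to a global holomorphic section of $[mD]|_V$ with zero divisor $mY$; in particular $[mD]|_Y$ is holomorphically trivial, and Theorem \ref{thm:main_tor} then produces a fibration $\pi\colon X \to R$ and an effective divisor $D_R$ on $R$ with $\pi^* D_R = m' D$. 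The connectedness of $Y$ together with $\mathrm{supp}(\pi^* D_R) = Y$ forces $\mathrm{supp}(D_R)$ to be a single point $p$, so $\pi^*\{p\} = a D$ with $a = m'/\mathrm{mult}_p(D_R) \in \mathbb{Q}_{>0}$, which is $(i)$.

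The remaining case---$d = 1$ with $\mathrm{Im}(\rho)$ infinite, hence dense in $U(1)$---is the principal obstacle. The Levi-flat hypersurfaces $\{|s_j| = \varepsilon\} \subset V \setminus Y$ then consist of dense, non-compact Levi leaves, but the unitary flatness of $[D]|_V$ makes the curvature $\omega$ vanish on $V$, so $\phi = -\log|s|^2_h$ has vanishing Levi form near $Y$ and the direct Andreotti--Grauert strategy of the $d \geq 2$ case is unavailable. My plan is to combine the pluripotential framework developed in the paper---in particular the Demailly--Nemirovski--Fu--Shaw type functions used in the proof of Theorem \ref{thm:main_gen}---with the density of the Levi foliation, showing that any would-be non-extending holomorphic function on $V\setminus Y$ must propagate along the closures of the dense leaves to yield a global holomorphic extension on $X\setminus Y$. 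Equivalently, one may verify Feklistov's cohomological criterion \cite{F2025} using the unitary flat local structure together with the infinite monodromy of $\rho$, thereby completing $(ii)$.
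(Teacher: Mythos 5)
Your overall architecture (dichotomy according to numerical triviality of $D$ along $Y$, then finite versus infinite monodromy of the flat structure supplied by Theorem \ref{thm:main_gen}, with exclusivity of $(i)$ and $(ii)$ proved by exhibiting a non-extendable function pulled back from $R$) is the same as the paper's, but the case you yourself flag as ``the principal obstacle'' --- ${\rm nd}([D])=1$ with infinite monodromy image --- is left as a plan, and this is a genuine gap. No propagation argument or verification of Feklistov's criterion is needed there: since the image of the monodromy representation is an infinite (hence dense) subgroup of ${\rm U}(1)$, every leaf of the Levi foliation of a level set $\{\psi=s\}$ is dense in that level set, and the maximum principle then forces every holomorphic function on $V\setminus Y$ to be \emph{constant} for a sufficiently small connected neighborhood $V$ of $Y$ (this is Lemma \ref{lem:scv_monodromy_onV} $(ii)$ in the paper). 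Consequently the Hartogs statement $(ii)$ holds vacuously --- constants obviously extend --- so the case closes in one line once you observe that there is nothing nontrivial to extend. Your worry that the Andreotti--Grauert mechanism is unavailable because the curvature vanishes near $Y$ is correct, but it points to the wrong remedy; the density-plus-maximum-principle argument is the missing idea.

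Two secondary points. In the case ${\rm nd}([D])\ge 2$, the claim that one can ``arrange the curvature to have rank at least two at some point of $Y$'' by deforming the metric is neither justified nor needed: semi-positivity together with $(D^2.\{\omega\}^{n-2})>0$ already gives $\int_{Y_\nu}\frac{\sqrt{-1}}{2\pi}\Theta_{h_D}\wedge\omega^{\wedge(n-2)}>0$ for some $\nu$ (equation (\ref{eq:keisan_D2alphan-2})), so the hypothesis of Ohsawa's theorem (curvature semi-positive on the tangent spaces of $Y$ and not identically zero at some point of $Y$) is satisfied by the given metric as it stands; alternatively one can argue as the paper does via Corollary \ref{cor:key} $(iii)$ with $k=1$ and Ohsawa's Hartogs-type theorem. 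In the finite-monodromy case, the functions $s_j$ from Theorem \ref{thm:main_gen} define a divisor $D'$ with support $Y$ that need not equal $D$, so $s_j^m$ patching to a global function only trivializes $[mD']|_V$; to conclude that some $[m''D]|_Y$ is trivial (so that Theorem \ref{thm:main_tor} applies) you must first relate $D'$ to $D$, e.g.\ via Lemma \ref{lem:from_linalg} $(ii)$ using Lemma \ref{lem:num1_flatrestr}. This is easily repaired, but as written the step conflates $D'$ with $D$.
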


The organization of the paper is as follows.
In \S 2 we review basic definitions, notation, and known results that will be used throughout the paper.
\S 3 collects preparatory results and observations which serve as tools for the proofs of the main theorems.
\S 4 is devoted to the proofs of the main theorems.
In \S 5 we present several examples, discuss the torsion-type assumption in Theorem \ref{thm:main_tor} from the viewpoint of the abundance conjecture, and derive a result for Calabi–Yau manifolds (Corollary \ref{cor:CYleviflat}) as a further outcome of one of our main results. 
\vskip3mm
{\bf Acknowledgment. } 
This article was written while the author was visiting the Bergische Universität Wuppertal. 
He would like to thank this institution and the members of the Complex Analysis Group Wuppertal, 
especially Professors Nikolay Shcherbina, Stefan Nemirovski, and Thomas Pawlaschyk, 
for their warm-hearted hospitality and many fruitful discussions. 
The author is also deeply grateful to Professor Kazuko Matsumoto; 
indeed, this work was initiated by a discussion following her seminar talk during her stay at Wuppertal, where Professor Stefan Nemirovski pointed out the relevance of the results and techniques in \cite{N} and \cite{FS}. 
The author also thanks Professor Masanori Adachi for pointing out the relevance of Demailly's work \cite{Demailly1987} to the present context. 
He furthermore thanks Professors Yoshinori Gongyo, Vladimir Lazić, Masataka Iwai, Shin-ichi Matsumura, and Valentino Tosatti for helpful and insightful discussions,
as well as for informing him of the latest developments relevant to \S 1 and 5.
This work was supported by JSPS KAKENHI Grant Numbers JP23K03119 and JP22KK0232 
(Fund for the Promotion of Joint International Research (Fostering Joint International Research (A))).

\section{Preliminaries}

\subsection{Notation and basic conventions}
In this subsection, we fix the notation and conventions used throughout this paper.
Unless otherwise stated, any connected set will be assumed to be non-empty, any complex manifold will be assumed to be paracompact and Hausdorff, and any Riemann surface will be assumed to be connected.

For a complex manifold $\Omega$ of dimension $n$, a divisor $D$ on $\Omega$, and a holomorphic line bundle $L$ on $\Omega$, we fix our notational conventions as follows. 
\begin{itemize}
\item $\mathbb{Z}$ denotes the ring of integers.  
\item $\mathbb{Q}$, $\mathbb{R}$, and $\mathbb{C}$ denote the fields of rational, real, and complex numbers, respectively. 
\item $\mathbb{P}^\ell$ denotes the complex projective space of dimension $\ell$. 
\item ${\rm U}(1)$ denotes $\{z \in \mathbb{C} \mid |z| = 1\}$. 
\item $\mathbb{Z}_+^N$ and $\mathbb{R}_+^N$ denote the sets of all $N$-dimensional vectors 
\[
\mathfrak{m}=\begin{pmatrix}
m_1  \\
m_2  \\
\vdots \\
m_N
\end{pmatrix} 
\]
such that each $m_i$ is an integer and a real number, respectively, and $m_i > 0$ for all $i = 1, 2, \dots, N$. 
\item For a topological space $Z$, $\pi_1(Z, *)$ denotes the fundamental group of $Z$, where $*\in Z$ is a base point. 
\item $\mathcal{O}_\Omega$ denotes the sheaf of germs of holomorphic functions on $\Omega$. 
\item For a function $f\colon \Omega \to \mathbb{C}$, ${\rm Re}\,f$ and ${\rm Im}\,f$ denote the real and the imaginary parts of $f$, respectively. 
\item $\mathcal{O}_\Omega(L)$ denotes the sheaf of germs of holomorphic sections of $L$ on $\Omega$. 
\item ${\rm Supp}\,D$ denotes the support of $D$. 
\item $[D]$ denotes the holomorphic line bundle associated to $D$ (see, e.g., \cite[Chapter I, \S 1]{GH}). 
\item $T_D$ denotes the current of integration along $D$, namely $T_D$ is the closed semi-positive current defined by letting 
\[
\langle T_D, \phi \rangle := \sum_{j=1}^N m_j\cdot \int_{Y_j}\phi
\]
for $(n-1, n-1)$-form $\phi$ with compact support, where $D=\textstyle\sum_{j=1}^Nm_jY_j$ is the irreducible decomposition of $D$ (Note that $\langle T_D, \phi \rangle$ is well-defined for any $(n-1, n-1)$-form $\phi$ if the support of $D$ is compact. See, e.g., \cite[\S 1]{Demailly} and \cite[Chapter III, \S 2]{Demaillyagbook}). 
\item For an integer $m$, $L^m := L^{\otimes m}$ denotes the $|m|$-th tensor power of $L$ if $m \geq 0$, and the dual line bundle of $L$ if $m < 0$. 
\item For a $C^\infty$ Hermitian metric $h$ on $L$, $\Theta_h$ denotes the Chern curvature form of $h$. 
\item $\del$ and $\delbar$ denote the complex exterior derivatives of type $(1, 0)$ and $(0, 1)$, respectively.
\item $d^c$ denotes the operator $\textstyle\frac{1}{4\pi\sqrt{-1}}(\del - \delbar)$. 
\item For a reduced analytic space $Y$ and a sheaf $\mathcal{F}$ on $Y$, 
we denote by $\Gamma(Y, \mathcal{F})$ the set of sections of $\mathcal{F}$ on $Y$, and by $H^q(Y,\mathcal{F})$ the $q$-th cohomology group of $\mathcal{F}$ on $Y$. 
When $\mathcal{F}$ is the sheaf of locally constant functions 
with values in a group, or the sheaf of holomorphic sections of a holomorphic vector bundle, 
we naturally identify $H^q(Y,\mathcal{F})$ 
with the corresponding singular (or de Rham, if the group is $\mathbb{R}$ or $\mathbb{C}$ and $Y$ is non-singular) or Dolbeault cohomology group (if $Y$ is non-singular), respectively (For the details, refer to \cite[\S 2.2]{Ko20242}). 
\item When $\Omega$ is a compact complex manifold of dimension $n$, $\chi(\Omega, \mathcal{O}_{\Omega})$ denotes the holomorphic Euler characteristic $\textstyle\sum_{q=1}^n(-1)^q{\rm dim}\,H^q(\Omega, \mathcal{O}_\Omega)$ of $\Omega$. 
\item For a $(p, q)$-form $\alpha$ on $\Omega$, $\{\alpha\}$ denotes its class in the Dolbeault cohomology group $H^{p, q}(\Omega)$. 
\end{itemize}

Note that, according to our definition of the operator $d^c$, 
the Poincaré--Lelong formula (see, for example, \cite[Chapter III, (2.15)]{Demaillyagbook}) can be written as
\[
dd^c \log |f|^2 = T_D,
\]
where $f \in \Gamma(\Omega, \mathcal{O}_\Omega)$ and $D = {\rm div}\, f$ is the divisor of $f$.

We also adopt the following notational simplifications: 
when no confusion is likely to arise, we often write $c_1^{\mathbb{R}}$ simply as $c_1$, 
and for a divisor $D$, we write $c_1([D])$ simply as $c_1(D)$. 
Similarly, intersection numbers (in the case where $\Omega$ or ${\rm Supp}\,D$ is compact) such as 
\[
(c_1^{\mathbb{R}}([D])^k, \alpha) := \int_{\Omega} c_1^{\mathbb{R}}([D])^{\wedge k}\wedge \alpha, 
\]
where $k\in \{0, 1, \dots, n\}$ and $\alpha \in H^{n-k, n-k}(\Omega)$, will be denoted by $(D^k, \alpha)$, following standard conventions. 

%

\subsection{Standing assumptions and setting}\label{subsection:standing_seting}
In this subsection, we fix the assumptions and the geometric setting that will be frequently referred to throughout the paper. We let: 
\begin{itemize}
\item $X$ be a complex manifold of complex dimension $n \geq 1$ (since in most of the statements and arguments in this paper the case $n=1$ is obvious, we will mainly consider the case $n>1$ unless otherwise stated);
\item $Y \subset X$ be a connected compact reduced complex hypersurface, whose irreducible decomposition is denoted by 
\[
Y=\bigcup_{\nu=1}^NY_\nu; 
\]
\item $D$ be an effective divisor such that ${\rm Supp}\, D = Y$;
\item $h_D$ be a $C^\infty$ Hermitian metric on $[D]$.
\end{itemize}

We shall focus on the function
\[
  \psi := - \log |\sigma_D|_{h_D}^2,
\]
and also on 
\[
  \eta := |\sigma_D|_{h_D}^2 = e^{-\psi},
\]
where $\sigma_D\in \Gamma(X, \mathcal{O}_X([D]))$ is the canonical section of $[D]$. 
When we consider their level sets, we use the parameters $s$ and $t$ which are related by 
\begin{equation}\label{eq:relation_sandt}
s = -\log t, 
\end{equation}
and simply denote them by $\{\psi = s\}$ and $\{\eta = t\}$; 
in such cases, $s$ is regarded as sufficiently large and $t$ as sufficiently small and positive, since in most of the arguments our interest lies in small neighborhoods of $Y$.

In view of Remark \ref{rmk:psi-h-corresp} below, 
by suitably modifying the function $\psi$ outside of a small neighborhood of $Y$ and 
replacing $h_D$ with the corresponding Hermitian metric on $[D]$, 
we may always arrange the following property to hold, since one can connect a point close to $Y$ and a point of $Y$ by a path by considering, for example, the downwards flow of $\eta$ (refer to an argument in the proof of Lemma \ref{lem:psi_levelset_conn} for the non-vanishing of $d\eta$ at a point close to $Y$): 
\begin{quote}
{\bf Property $(\ast)$.}\;
For sufficiently small $t_0>0$, 
the sublevel set $\{\eta<t_0\}$ is a connected open neighborhood of $Y$. \qed 
\end{quote}
Moreover, as will be seen later in Remark \ref{rmk:wlogwmaV0conn}, 
this modification can be carried out in such a way that the semi-positivity condition $\sqrt{-1}\Theta_{h_D}\ge0$ is preserved whenever it holds. 
Throughout \S 3 and in the subsequent sections, 
we shall often assume that $h_D$ satisfies {\bf Property $(\ast)$}, 
mainly when this can be done without loss of generality. 

\begin{remark}\label{rmk:psi-h-corresp}
Fix $X$ and $D$ as above. 
As above, once a $C^\infty$ Hermitian metric $h_D$ on $[D]$ is fixed, 
the corresponding function $\psi$ is determined. 
Here we note that, conversely, a function on $X$ having logarithmic singularities only along $Y$ (such as $\psi$ defined above) determines an Hermitian metric $h_D$ on $[D]$ by letting $\varphi := \psi + \log|\sigma|^2$ be its local weight function, where $\sigma$ is a local defining function of $D$, on each locus. 
Note also that, it follows from the Poincaré--Lelong formula that they are related by
\begin{equation}\label{eq:Poincare_lelong_ddcpsi}
  dd^c \psi = - T_D + \frac{\sqrt{-1}}{2\pi} \Theta_{h_D}. 
\end{equation}
\end{remark}

\subsection{Numerical triviality along $Y$ with respect to Kähler classes of $X$}

For $X$, $Y$, and $D$ as in \S \ref{subsection:standing_seting}, we shall mainly investigate the case where $D$ is nef and the numerical dimension ${\rm nd}(D):={\rm nd}([D])$ equals $1$ when $X$ is compact Kähler. 
When $X$ is not assumed to be compact, we shall often focus on the case where $D$ enjoys the following condition, motivated by Lemma \ref{lem:num1_flatrestr} below. 
\begin{definition}\label{def:num_triv_along_Y_wrt_K}
Let $X$, $Y$, and $D$ be as above. Assume that $X$ is Kähler. 
We say that \emph{$D$ is numerically trivial along $Y$ with respect to Kähler classes of $X$} if 
\begin{equation}\label{eq:indefofnumtrivalongY}
\int_{Y_\nu} c_1(D) \wedge \alpha^{\wedge (n-2)} = 0
\end{equation}
holds for any Kähler class $\alpha$ of $X$ and for any $\nu$. 
\end{definition}

Note that, when $X$ is Kähler and $[D]$ is semi-positive (or more generally $D$ is nef, when $X$ is compact), $D$ is numerically trivial along $Y$ with respect to Kähler classes of $X$ if and only if there exists a Kähler class $\alpha$ of $X$ such that (\ref{eq:indefofnumtrivalongY}) holds, since, for any Kähler forms $\omega$ and $\omega'$ on $X$, we have $\ve\omega <  \omega' < M\omega$ at every point of $Y$ for some positive numbers $\ve$ and $M$ (Here we use the compactness of $Y$). 
For example, an effective divisor $D$ on a Kähler manifold $X$ is numerically trivial along $Y$ with respect to Kähler classes of $X$ if $c_1^{\mathbb{R}}([D]|_{Y_\nu}) = 0$ holds for every $\nu$. 

\begin{lemma}\label{lem:num1_flatrestr}
Assume that $X$ is compact Kähler and that $D\not=0$ is an effective nef divisor on $X$. 
Then $D$ is numerically trivial along $Y$ with respect to Kähler classes of $X$ if ${\rm nd}(D) = 1$. The converse also holds when $[D]$ is semi-positive. 
\end{lemma}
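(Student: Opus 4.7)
The plan is to derive both directions from a single Poincaré--Lelong identity together with one auxiliary non-negativity statement coming from nefness. Writing the irreducible decomposition $D=\sum_{\nu=1}^N m_\nu Y_\nu$ with $m_\nu\in\mathbb{Z}_{>0}$, the class $c_1(D)$ is represented as a current by $T_D=\sum_\nu m_\nu T_{Y_\nu}$, so for any Kähler class $\alpha$ the pairing
\[
\int_X c_1(D)^2 \wedge \alpha^{n-2} \;=\; \sum_{\nu=1}^N m_\nu \int_{Y_\nu} c_1(D) \wedge \alpha^{n-2}
\]
holds at the level of cohomological intersection numbers. The auxiliary step is that each summand on the right is non-negative: nefness of $D$ means $c_1(D)+\varepsilon\alpha$ is a Kähler class for every $\varepsilon>0$, and this positive class wedged with $\alpha^{n-2}$ and tested against the positive closed current $T_{Y_\nu}$ is non-negative; letting $\varepsilon\to 0$ gives the claim. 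Note that the current-theoretic formulation absorbs the fact that the components $Y_\nu$ may be singular, so no resolution is needed.

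For the forward direction, assume ${\rm nd}(D)=1$, so that $c_1(D)^2 = 0$ in $H^4(X,\mathbb{R})$ and hence the left-hand side of the displayed identity vanishes. Since every $m_\nu$ is a positive integer and each summand on the right is non-negative, each $\int_{Y_\nu} c_1(D)\wedge \alpha^{n-2}$ must vanish, which is precisely numerical triviality of $D$ along $Y$ with respect to Kähler classes of $X$.

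For the converse, assume in addition that $[D]$ is semi-positive and fix a smooth Hermitian metric $h_D$ on $[D]$ with $\omega:=\tfrac{\sqrt{-1}}{2\pi}\Theta_{h_D}\ge 0$, which I use as a smooth representative of $c_1(D)$ in both factors of the displayed identity. The numerical triviality hypothesis then forces $\int_X \omega^2\wedge\alpha^{n-2}=0$. Since $\omega^2\wedge\alpha^{n-2}$ is a non-negative top form on $X$, it vanishes pointwise. At each point $p\in X$ I would simultaneously diagonalize $\omega$ with respect to $\alpha$ with non-negative eigenvalues $\lambda_1,\dots,\lambda_n$; then $\omega^2\wedge\alpha^{n-2}$ is a positive multiple of $\bigl(\sum_{j<k}\lambda_j\lambda_k\bigr)\alpha^n$, so its pointwise vanishing forces at most one $\lambda_j$ to be nonzero at $p$, and in particular $\omega^2=0$ at $p$. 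Hence $c_1(D)^2=0$ in $H^{2,2}(X)$. Finally ${\rm nd}(D)\ge 1$ holds automatically because $D\ne 0$ gives $\int_X c_1(D)\wedge\alpha^{n-1}=\sum_\nu m_\nu\int_{Y_\nu}\alpha^{n-1}>0$, so ${\rm nd}(D)=1$.

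The most delicate step is the extraction of a pointwise conclusion ($\omega^2\equiv 0$) from the integral identity $\int_X\omega^2\wedge\alpha^{n-2}=0$ in the converse direction. Via simultaneous diagonalization it reduces to the elementary fact that non-negative reals $\lambda_j$ satisfying $\sum_{j<k}\lambda_j\lambda_k=0$ have at most one nonzero entry, so the argument goes through cleanly; the semi-positivity of $[D]$ is crucially used here to produce the smooth non-negative representative $\omega$, and this is why the converse may fail for a general nef $D$.
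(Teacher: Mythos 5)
Your proof is correct and takes essentially the same route as the paper: the Poincar\'e--Lelong decomposition identity $(D^2,\alpha^{n-2})=\sum_{\nu} m_\nu\int_{Y_\nu}c_1(D)\wedge\alpha^{\wedge(n-2)}$ together with nefness for the forward direction, and the pointwise eigenvalue argument applied to a smooth semi-positive curvature representative of $c_1(D)$ for the converse. The only difference is that you make explicit two steps the paper leaves implicit (nonnegativity of each summand, via $c_1(D)+\varepsilon\alpha$ being K\"ahler, and ${\rm nd}(D)\geq 1$ from $D\neq 0$), which is fine.
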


\begin{proof}
The first half of the assertion follows from the equation 
\begin{equation}\label{eq:keisan_D2alphan-2}
(D^2, \alpha^{n-2}) 
= \sum_{\nu=1}^N m_\nu \cdot \int_X c_1(Y_\nu) \wedge c_1(D) \wedge \alpha^{\wedge (n-2)} 
= \sum_{\nu=1}^N m_\nu \cdot \int_{Y_\nu} c_1(D) \wedge \alpha^{\wedge (n-2)} 
\end{equation}
for a Kähler class $\alpha$ of $X$, where $m_\nu$ is the coefficient of $Y_\nu$ in the irreducible decomposition of $D$ (Here we applied the Poincaré--Lelong formula by considering a canonically attached singular Hermitian metric on each $[Y_\nu]$, see \cite[(3.13)]{Demailly} for further details). 
To show the converse, under the assumption that $[D]$ admits a $C^\infty$ Hermitian metric $h_D$ with $\sqrt{-1}\Theta_{h_D}\geq 0$, assume that $D$ is numerically trivial along $Y$ with respect to Kähler classes of $X$. 
Then, again by using equation (\ref{eq:keisan_D2alphan-2}), we have that $(D^2. \{\omega\}^{n-2})=0$ holds for a Kähler form $\omega$ of $X$. Therefore, for the eigenvalues $0\leq \gamma_1\leq \gamma_2\leq \dots\leq \gamma_n$ of $\sqrt{-1}\Theta_{h_D}$ with respect to $\omega$, we have 
\[
\sum_{1\leq j<k\leq n} \int_X \gamma_j\gamma_k\cdot \omega^{\wedge n}
 = c_n\cdot\int_X (\sqrt{-1}\Theta_{h_D})^{\wedge 2}\wedge \omega^{\wedge (n-2)} = 0
\]
holds, where $c_n$ is a positive constant that depends only on $n$. 
From this equation, it follows that $\gamma_j\gamma_k=0$ holds at any point of $X$ for any $1\leq j<k\leq n$. Thus we have $(\sqrt{-1}\Theta_{h_D})^{\wedge 2}\equiv 0$ as $(2, 2)$-forms, from which the assertion follows. 
\end{proof}

\begin{remark}\label{rmk_torsion_normalbundle}
Here let us consider the case where $D=Y$ for a non-singular complex hypersurface $Y$ of a compact Kähler manifold $X$. 
If $D$ is nef and ${\rm nd}(D)=1$, then by Lemma \ref{lem:num1_flatrestr} we have $[D]|_Y = [Y]|_Y$, which can be naturally identified with the (holomorphic) normal bundle $N_{Y/X}$ of $Y$, satisfies $(c_1([D]|_Y). \alpha_Y^{n-2}) = (c_1([D]|_Y)^2. \alpha_Y^{n-3}) = 0$ for a Kähler class $\alpha_Y$ of $Y$. 
Therefore it follows from \cite[Theorem A]{DN} that $c_1^{\mathbb{R}}([D]|_Y)=0$ (see also the argument in the end of \cite[Remark 3.4]{TZ}). 
Thus, by Kashiwara's theorem, $N_{Y/X}$ is unitary flat. 
Such flat line bundles correspond to elements of $H^1(Y,{\rm U}(1))$ (see Lemma \ref{lem:flatlb_funds} $(i)$), 
and the assumption in Theorem \ref{thm:main_tor} means that the element corresponding to $N_{Y/X}$ is torsion in this group. 
\end{remark}

\subsection{Some linear-algebraic facts from \cite{KU}}\label{subsection:linalg}

For the reader's convenience, we recall in this subsection 
some linear-algebraic facts established in \cite{KU}, 
restricting ourselves to those that will be used in the present paper.

Let $A$ be a square matrix of order $N\geq 1$ which satisfies the following three properties. 
\begin{description}
\item[(P$_1$)] $A$ is a symmetric matrix with real entries. 
\item[(P$_2$)] Any non-diagonal entry of $A$ is non-negative. 
\item[(P$_3$)] For any partition $(I, J)$ of $\{1, 2, \dots, N\}$, there exist elements $i\in I$ and $j\in J$ such that the $(i, j)$ entry of $A$ is positive. 
\end{description}
Here we say that $(I, J)$ is a {\it partition} of $\{1, 2, \dots, N\}$ 
if $I$ and $J$ are non-empty subsets of $\{1, 2, \dots, N\}$ such that $I\cap J=\emptyset$ and $I\cup J=\{1, 2, \dots, N\}$ hold. 

\begin{proposition}[={\cite[Proposition 3.2]{KU}}]\label{prop:lin_alg_main}
Let $N$ be a positive integer and $A$ be a square matrix of order $N$ which satisfies {\bf (P$_1$)}, {\bf (P$_2$)}, and {\bf (P$_3$)}. Then the following holds for the largest eigenvalue $\lambda(A)$ of $A$. \\
$(i)$ $\lambda(A)>0$ if and only if there exists an element $\mathfrak{m}\in \mathbb{R}_+^N$ such that $A\mathfrak{m}\in \mathbb{R}_+^N$. \\
$(ii)$ $\lambda(A)=0$ if and only if there exists an element $\mathfrak{m}\in \mathbb{R}_+^N$ such that $A\mathfrak{m}=0$. \\
$(iii)$ $\lambda(A)<0$ if and only if there exists an element $\mathfrak{m}\in \mathbb{R}_+^N$ such that $-A\mathfrak{m}\in \mathbb{R}_+^N$. \\
In each of the assertions $(i)$, $(ii)$, and $(iii)$, ``\,$\mathfrak{m}\in \mathbb{R}_+^N$'' can be replaced with ``$\,\mathfrak{m}\in \mathbb{Z}_+^N$'' when all entries of $A$ are integers. 
\end{proposition}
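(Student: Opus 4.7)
The plan is to reduce to the classical Perron--Frobenius theorem for irreducible non-negative symmetric matrices. First choose a constant $c > 0$ large enough that $B := A + cI$ has only non-negative entries; this is possible because (P$_2$) already guarantees non-negativity off the diagonal. By (P$_1$) the matrix $B$ is symmetric, and by (P$_3$) the undirected graph whose edges correspond to pairs $(i,j)$ with $A_{ij}=B_{ij}>0$ is connected---this is exactly the irreducibility of $B$ in the sense of Perron--Frobenius, since (P$_3$) forbids any partition of the index set with no off-diagonal link between the two blocks. The Perron--Frobenius theorem then furnishes a strictly positive eigenvector $\mathfrak{v} \in \mathbb{R}_+^N$ of $B$ whose eigenvalue is the spectral radius $\rho(B)$, and since $\rho(B) = \lambda(A) + c$ we obtain $A\mathfrak{v} = \lambda(A)\mathfrak{v}$. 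Moreover the $\lambda(A)$-eigenspace of $A$ is one-dimensional.

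For the forward implications of (i), (ii), (iii), I simply take $\mathfrak{m} := \mathfrak{v}$: then $A\mathfrak{m} = \lambda(A)\mathfrak{v}$ has strictly positive entries, vanishes identically, or has strictly negative entries, according to the sign of $\lambda(A)$. For the converse implications I exploit the symmetry of $A$: for any $\mathfrak{m} \in \mathbb{R}_+^N$,
\[
 \langle \mathfrak{v},\, A\mathfrak{m}\rangle
 = \langle A\mathfrak{v},\, \mathfrak{m}\rangle
 = \lambda(A)\,\langle \mathfrak{v},\, \mathfrak{m}\rangle,
\]
and $\langle \mathfrak{v},\mathfrak{m}\rangle > 0$ since both vectors have strictly positive entries. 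Hence $A\mathfrak{m} \in \mathbb{R}_+^N$ (resp.\ $A\mathfrak{m} = 0$, resp.\ $-A\mathfrak{m} \in \mathbb{R}_+^N$) forces $\lambda(A) > 0$ (resp.\ $=0$, resp.\ $<0$). This simultaneously shows that the three cases (i), (ii), (iii) are mutually exclusive and establishes all three ``if and only if'' statements.

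For the integer refinement, in cases (i) and (iii) the condition that $A\mathfrak{m}$ (respectively $-A\mathfrak{m}$) lies in $\mathbb{R}_+^N$ is an open condition on $\mathfrak{m}$, so one may perturb any given $\mathfrak{m}\in \mathbb{R}_+^N$ slightly to a vector in $\mathbb{Q}_+^N$ and then clear denominators to obtain one in $\mathbb{Z}_+^N$. In case (ii), since $A$ has integer entries and $\ker A$ is one-dimensional, Gaussian elimination over $\mathbb{Q}$ produces a rational generator of $\ker A$, and this generator is necessarily a scalar multiple of the strictly positive vector $\mathfrak{v}$; after fixing the sign and clearing denominators one obtains the desired $\mathfrak{m}\in \mathbb{Z}_+^N$. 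The only genuinely non-routine ingredient in the entire argument is the invocation of Perron--Frobenius on the shifted matrix $B$; verifying that (P$_3$) is the right combinatorial form of irreducibility is the one small point of care, but as noted above it is essentially immediate.
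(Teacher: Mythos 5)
Your proof is correct: the shift $B=A+cI$, the identification of (P$_3$) with irreducibility, the Perron--Frobenius positive eigenvector, the symmetry pairing $\langle\mathfrak{v},A\mathfrak{m}\rangle=\lambda(A)\langle\mathfrak{v},\mathfrak{m}\rangle$ for the converses, and the openness/one-dimensionality arguments for the integrality refinement all hold up. The paper itself only cites \cite{KU} for this proposition, but the key ingredient it records from that source, Lemma \ref{lem:lin_alg} (positivity of an eigenvector for the largest eigenvalue), is exactly the Perron--Frobenius input you invoke, so your argument follows essentially the same route as the intended one.
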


In the proof of Proposition \ref{prop:lin_alg_main}, we used the following lemma. 
\begin{lemma}[={\cite[Lemma 3.3]{KU}}]\label{lem:lin_alg}
Let $N$ be a positive integer, $A$ a square matrix of order $N$ which satisfies {\bf (P$_1$)}, {\bf (P$_2$)}, and {\bf (P$_3$)}, 
and $\mathfrak{m}\in \mathbb{R}^N$ be an eigenvector associated to the largest eigenvalue of $A$. 
Then either $\mathfrak{m}$ or $-\mathfrak{m}$ is an element of $\mathbb{R}_+^N$. 
\end{lemma}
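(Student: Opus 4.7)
The plan is to carry out a symmetric Perron--Frobenius style argument, treating (P$_3$) as the partition-irreducibility condition. First, by the symmetry (P$_1$), the largest eigenvalue of $A$ admits the Rayleigh characterization
\[
\lambda(A) = \max_{v\in\mathbb{R}^N\setminus\{0\}}\frac{v^\top A v}{v^\top v},
\]
and any non-zero eigenvector for $\lambda(A)$ realizes this maximum. For such an $\mathfrak{m}=(m_1,\dots,m_N)^\top$, set $|\mathfrak{m}|:=(|m_1|,\dots,|m_N|)^\top$. Using (P$_2$) to bound $a_{ij}m_im_j\leq a_{ij}|m_i||m_j|$ for every $i\neq j$, while the diagonal contributions are unchanged under $m_i\mapsto|m_i|$, one obtains $|\mathfrak{m}|^\top A|\mathfrak{m}|\geq\mathfrak{m}^\top A\mathfrak{m}$; combined with $|\mathfrak{m}|^\top|\mathfrak{m}|=\mathfrak{m}^\top\mathfrak{m}$ and the Rayleigh upper bound, equality is forced throughout. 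Consequently $|\mathfrak{m}|$ is itself an eigenvector for $\lambda(A)$, and for every off-diagonal pair $(i,j)$ one has the identity $a_{ij}(|m_i||m_j|-m_im_j)=0$.

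Next, I would show $|m_i|>0$ for every $i$. Let $I:=\{i\mid m_i=0\}$ and $J:=\{1,\dots,N\}\setminus I$; if both $I$ and $J$ are non-empty, then (P$_3$) supplies $i\in I$, $j\in J$ with $a_{ij}>0$. The eigenvalue equation $A|\mathfrak{m}|=\lambda(A)|\mathfrak{m}|$ yields, at the $i$-th coordinate,
\[
0=\lambda(A)\cdot|m_i|=\sum_{k=1}^{N}a_{ik}|m_k|\geq a_{ij}|m_j|>0,
\]
where the diagonal term $a_{ii}|m_i|$ vanishes regardless of the sign of $a_{ii}$, and every other off-diagonal contribution is non-negative by (P$_2$). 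This contradiction (together with the trivial case $I=\{1,\dots,N\}$, which would force $\mathfrak{m}=0$) yields $I=\emptyset$.

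Finally, I would use the off-diagonal identity from the first step to rule out mixed signs. Setting $I^{+}:=\{i\mid m_i>0\}$ and $I^{-}:=\{i\mid m_i<0\}$, which partition $\{1,\dots,N\}$ by the previous step, suppose that both $I^{+}$ and $I^{-}$ are non-empty. Then (P$_3$) supplies $i\in I^{+}$ and $j\in I^{-}$ with $a_{ij}>0$, whence $|m_i||m_j|>0>m_im_j$, contradicting $a_{ij}(|m_i||m_j|-m_im_j)=0$. Therefore either $I^{-}=\emptyset$, so that $\mathfrak{m}\in\mathbb{R}_+^N$, or $I^{+}=\emptyset$, so that $-\mathfrak{m}\in\mathbb{R}_+^N$. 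The main delicate point throughout is that (P$_3$) must be invoked twice---first to upgrade the non-negative eigenvector $|\mathfrak{m}|$ to strict positivity, and second to prevent sign changes---exactly mirroring the two roles of irreducibility in the classical Perron--Frobenius theorem for non-negative matrices.
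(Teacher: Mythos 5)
Your argument is correct and complete: the Rayleigh-quotient comparison of $\mathfrak{m}$ with $|\mathfrak{m}|$, the forced termwise equalities $a_{ij}\bigl(|m_i||m_j|-m_im_j\bigr)=0$, and the two separate invocations of {\bf (P$_3$)} (first, via the eigenvalue equation for $|\mathfrak{m}|$, to exclude vanishing entries; second to exclude mixed signs) yield exactly the assertion, and the case $N=1$, where {\bf (P$_3$)} is vacuous, is trivially covered since an eigenvector is then a nonzero scalar. Note that the paper does not reproduce a proof of this lemma but imports it as \cite[Lemma 3.3]{KU}, so there is no in-text argument to compare yours against; your symmetric Perron--Frobenius style proof is the standard one for matrices satisfying {\bf (P$_1$)}--{\bf (P$_3$)} and can stand as a self-contained justification of the statement.
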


\subsection{Notions of (semi-)positivity and unitary-flatness of line bundles}\label{fundamentals_on_sp}

In this subsection, we briefly recall several standard notions and facts on positivity, semi-positivity, and unitary-flatness of holomorphic line bundles. 
For the other fundamental notions on the positivity of holomorphic line bundles on compact Kähler manifolds such as \emph{ampleness} and \emph{nefness}, 
we refer to \cite[Chapter 1]{GH}, \cite{DP}, and \cite[\S 6.C]{Demailly}. 
Note that we say that a holomorphic line bundle $L$ on a complex manifold $X$ is 
\emph{semi-ample} if there exists a positive integer $m$ such that the sheaf $\mathcal{O}_X(L^m)$ is generated by global sections. 

A holomorphic line bundle $L$ on a complex manifold $X$ is said to be \emph{positive} (resp. \emph{semi-positive}) if it admits a $C^\infty$ Hermitian metric $h$ 
such that $\sqrt{-1}\Theta_h > 0$ (resp. $\sqrt{-1}\Theta_h \geq 0$) holds at every point of $X$. By using the notion of \emph{local weight functions} of an Hermitian metric (see \cite[\S 2.1]{Ko2021} for the definition), $L$ is positive (resp. semi-positive) if and only if it admits an Hermitian metric whose local weight functions are $C^\infty$ strongly plurisubharmonic (resp. plurisubharmonic). 
For basic facts and properties on semi-positive line bundles, see also \cite[\S 1.1]{Ko2021}, \cite[\S 2.1]{Ko2023}, and \cite[\S 6]{Ko2024}. 

A complex line bundle on a CW complex (e.g., on a reduced analytic subspace of a complex manifold \cite{L}) 
is said to be \emph{unitary flat} if it admits locally constant transition functions valued in $\mathrm{U}(1)$ 
for some choice of local trivializations. 
Throughout this paper, on a holomorphic line bundle, by ``a unitary flat structure'' we always mean one compatible with its holomorphic structure. 
Note that, by definition, any unitary flat line bundle on a complex manifold has the natural structure as a holomorphic line bundle, and admits a \emph{flat metric}, namely an Hermitian metric whose local weight functions are pluriharmonic. 

\begin{lemma}\label{lem:flatlb_funds}
For a holomorphic line bundle $L\to Z$ on a reduced connected analytic subspace $Z$ of a complex manifold, the following holds: \\
$(i)$ The set of isomorphism classes of unitary flat line bundles on $Z$ can be naturally identified with $H^1(Z, {\rm U}(1))\;(\cong \mathrm{Hom}(\pi_1(Z, *), {\rm U}(1)))$. \\
$(ii)$ When $Z$ is compact, a unitary flat structure on $L$, if it exists, is unique.\\
$(iii)$ When $Z$ is non-singular, $L$ admits a flat metric if and only if $L$ carries a unitary flat structure. 
\end{lemma}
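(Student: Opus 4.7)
The plan is to treat the three parts sequentially, reducing each to a standard sheaf-theoretic or potential-theoretic fact. For $(i)$, I would observe that a unitary flat structure on a complex line bundle is by definition a \v{C}ech $1$-cocycle with values in the sheaf $\underline{{\rm U}(1)}$ of locally constant ${\rm U}(1)$-valued functions, and that two such cocycles yield isomorphic unitary flat bundles exactly when they are cohomologous. Hence the set of isomorphism classes is identified with $H^1(Z, \underline{{\rm U}(1)})$. Since $Z$ has the homotopy type of a CW complex, this coincides with the singular cohomology $H^1(Z, {\rm U}(1))$; the universal coefficient theorem together with the Hurewicz isomorphism then identifies it with $\mathrm{Hom}(H_1(Z,\mathbb{Z}), {\rm U}(1)) = \mathrm{Hom}(\pi_1(Z, *), {\rm U}(1))$, the last equality using that ${\rm U}(1)$ is abelian.

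For $(ii)$, I would take two unitary flat structures on $L$, represented on a common refinement by locally constant ${\rm U}(1)$-valued cocycles $\{g_{jk}\}$ and $\{g'_{jk}\}$. Since they underlie the same holomorphic line bundle, there exist nowhere-vanishing holomorphic functions $h_j$ on $V_j$ with $g'_{jk} = h_j g_{jk} h_k^{-1}$. Taking absolute values, the $|h_j|$ agree on overlaps and thus patch to a continuous positive function $f$ on $Z$ which is locally the modulus of a holomorphic function, hence plurisubharmonic in the sense of reduced analytic spaces. The main obstacle here is the possible presence of singularities of $Z$: one has to apply the maximum principle for plurisubharmonic functions on the compact reduced analytic space $Z$, which I would obtain either directly (by passing to the normalization $\nu\colon \widetilde{Z}\to Z$, where the usual maximum principle applies to $f\circ \nu$ on each compact connected component, together with the connectedness of $Z$) or by invoking the standard extension of the maximum principle to this setting. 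Once $f$ is constant, each $h_j$ is a holomorphic function of constant modulus, hence locally constant; after rescaling by this common constant, the $h_j$ take values in ${\rm U}(1)$, so $\{g_{jk}\}$ and $\{g'_{jk}\}$ are cohomologous in $\underline{{\rm U}(1)}$, yielding uniqueness.

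For $(iii)$, one direction is immediate: given a unitary flat structure, declaring each local trivialization to have norm one produces a well-defined Hermitian metric since $|g_{jk}|\equiv 1$, and its local weight functions vanish identically, hence are pluriharmonic. For the converse, refine the cover so that each $V_j$ is a simply connected open subset of the non-singular manifold $Z$; pluriharmonicity of the local weight function $\varphi_j$ then yields a holomorphic function $u_j$ on $V_j$ with $\varphi_j = 2\,\mathrm{Re}\, u_j$. Replacing the frame $\sigma_j$ by $\widetilde{\sigma}_j := e^{u_j}\sigma_j$ gives $|\widetilde{\sigma}_j|_h^2 \equiv 1$, so the new transition functions $\widetilde{g}_{jk} = e^{u_j - u_k}g_{jk}$ are holomorphic of constant modulus one, hence locally constant with values in ${\rm U}(1)$, providing the desired unitary flat structure.
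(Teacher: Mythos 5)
Your proposal is correct. Parts $(i)$ and $(iii)$ follow essentially the same route as the paper: for $(i)$ the paper phrases the identification via monodromy representations and the associated-bundle construction (citing \L ojasiewicz for the CW structure) rather than via \v{C}ech cocycles for the locally constant sheaf $\underline{{\rm U}(1)}$, but the content — CW homotopy type, universal coefficients, abelianness of ${\rm U}(1)$ — is the same; for $(iii)$ your argument (write the pluriharmonic weight as $2\,{\rm Re}\,u_j$ on a refined cover and twist the frames by $e^{u_j}$) is precisely the paper's proof up to the sign convention for weights. The genuine difference is in $(ii)$: the paper simply cites an external lemma, whereas you supply a self-contained argument — compare the two flat frame systems, observe that the comparison functions $h_j$ have globally well-defined modulus, and kill it by the maximum principle so that the $h_j$ become locally constant unitary factors after rescaling. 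This is a legitimate and arguably more transparent route, and it correctly identifies the only delicate point, namely the maximum principle on a compact connected reduced space. One small imprecision there: the normalization $\widetilde{Z}$ may itself be singular in dimension $\ge 2$, so ``the usual maximum principle'' does not literally apply on its components; you should instead invoke the standard fact that a plurisubharmonic function on a compact irreducible reduced complex space is constant (e.g.\ by pulling back to a resolution, where the manifold case applies), and then use the connectedness of $Z$ to propagate the constant across irreducible components, exactly as you indicate. With that phrasing fixed, your argument for $(ii)$ is complete and yields the cohomologousness of the two ${\rm U}(1)$-cocycles, which is the uniqueness statement the paper actually uses (well-definedness of the monodromy representation $\rho_{D|_Y}$).
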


\begin{proof}
For $(i)$, 
we first note that, by the universal coefficient theorem and the fact that ${\rm U}(1)$ is abelian and divisible, we have natural isomorphisms
\[
H^1(Z, {\rm U}(1)) \cong {\rm Hom}(H_1(Z, \mathbb{Z}), {\rm U}(1))
\cong {\rm Hom}(\pi_1(Z, *)^{\rm ab}, {\rm U}(1))
\cong {\rm Hom}(\pi_1(Z, *), {\rm U}(1)), 
\]
where $\pi_1(Z, *)^{\rm ab}$ is the abelianization of $\pi_1(Z, *)$. 
The correspondence between the set of isomorphism classes of unitary flat line bundles on $Z$ and the set $\mathrm{Hom}(\pi_1(Z, *), {\rm U}(1))$ of ${\rm U}(1)$-representations of $\pi_1(Z, *)$ can be obtained in the same manner as in the manifold case, since $Z$ admits the homotopy type of a CW complex by \L ojasiewicz's theorem \cite{L}. 
Indeed, given a unitary flat line bundle on $Z$, its monodromy representation yields an element of ${\rm Hom}(\pi_1(Z, *), {\rm U}(1))$; 
conversely, for a ${\rm U}(1)$-representation of $\pi_1(Z, *)$, the associated bundle $(\widetilde{Z}\times \mathbb{C})/\pi_1(Z, *)$ provides the corresponding unitary flat line bundle, where $\widetilde{Z}$ is the universal covering space of $Z$. 
We refer to the proof of \cite[Proposition 2.2]{Ko2020} for further details. 
For $(ii)$, refer to \cite[Lemma 2.3]{Ko20242}. 
To show $(iii)$, it is sufficient to construct a structure of a unitary flat line bundle on $L$ by assuming that $L$ admits a flat metric $h$. 
Fix an open covering $\{U_j\}$ of $Z$ and a trivialization $e_j\in \Gamma(U_j, \mathcal{O}_Z(L))$ of $L$ on each $U_j$. 
Denote by $\vp_j$ the local weight function of $h$ on $U_j$. 
By replacing $\{U_j\}$ with its refinement if necessary, one can take a holomorphic function $f_j$ on each $U_j$ such that ${\rm Re}\,f_j = \vp_j$, since each $\vp_j$ is pluriharmonic. 
Define another local trivialization $\widehat{e}_j\in \Gamma(U_j, \mathcal{O}_Z(L))$ by letting $\widehat{e}_j := \exp(f_j/2)\cdot e_j$. Then, as 
\[
|\widehat{e}_j|_h^2=
|\exp(f_j/2)\cdot e_j|_h^2 =\exp({\rm Re}\,f_j)\cdot e^{-\vp_j} = 1,
\]
it follows that the transition functions of $\widehat{e}_j$'s are locally constant with values in ${\rm U}(1)$. 
\end{proof}

Let us return to the setting of \S \ref{subsection:standing_seting}. 
In this setting, first we note that the semi-positivity of $[D]$ 
is a semi-local property that can be checked near $Y$ in the sense of the following lemma (see also \cite[\S 2.1]{Ko2021} for the case $D=Y$ for a non-singular hypersurface $Y\subset X$). 

\begin{lemma}\label{lem:sp_local}
Let $X, Y$, and $D$ be as in \S \ref{subsection:standing_seting}. 
Then the following conditions are equivalent:\\
$(i)$ The line bundle $[D]$ is semi-positive.\\
$(ii)$ There exists a neighborhood $V$ of $Y$ such that the restriction $[D]|_V$ is semi-positive.
\end{lemma}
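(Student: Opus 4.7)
The implication $(i) \Rightarrow (ii)$ is immediate: take $V = X$. The content is in showing $(ii) \Rightarrow (i)$, and my plan is to extend the given semi-positive metric $h$ on $[D]|_V$ to all of $[D]$ by gluing it, near $Y$, with a flat metric on $[D]|_{X\setminus Y}$ using Demailly's regularized maximum. The guiding observation is that on $X \setminus Y$ the canonical section $\sigma_D$ trivializes $[D]$, so on this open set any smooth positive value of $|\sigma_D|_{h'}^2$ defines a Hermitian metric and any positive constant value yields a flat (hence semi-positive) one; the only real issue is to match this trivial picture smoothly with $h$ across a suitable transition region while preserving plurisubharmonicity of the corresponding weight.

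Concretely, set $\psi := -\log|\sigma_D|_h^2$ on $V \setminus Y$. By (\ref{eq:Poincare_lelong_ddcpsi}), the hypothesis $\sqrt{-1}\Theta_h \geq 0$ translates to $dd^c\psi \geq 0$ on $V\setminus Y$, so $\psi$ is smooth and plurisubharmonic on $V \setminus Y$ and tends to $+\infty$ along $Y$. Using compactness of $Y$, choose relatively compact open neighborhoods $U' \Subset U \Subset V$ of $Y$, let $C := \sup_{\overline{U}\setminus U'}\psi < \infty$, and fix $T_0 > C + \eta$ for a small $\eta > 0$. Let $M_\eta$ denote Demailly's regularized maximum (see \cite[Chapter I, Lemma 5.18]{Demaillyagbook}), which is $C^\infty$, preserves plurisubharmonicity of its arguments, and satisfies $M_\eta(a,b) = a$ when $a \geq b + \eta$ and $M_\eta(a,b) = b$ when $b \geq a + \eta$. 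Define
\[
\widetilde{\psi} :=
\begin{cases}
M_\eta(\psi,\, T_0) & \text{on } U \setminus Y, \\
T_0 & \text{on } X \setminus \overline{U'}.
\end{cases}
\]
On the overlap $U \setminus \overline{U'}$, which lies in $\overline{U}\setminus U'$, we have $\psi \leq C < T_0 - \eta$, so $M_\eta(\psi, T_0) = T_0$ and the two definitions agree. Thus $\widetilde{\psi}$ is a well-defined smooth plurisubharmonic function on $X \setminus Y$, and on the smaller neighborhood $\{\psi > T_0 + \eta\}$ of $Y$ one has $\widetilde{\psi} = \psi$.

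Finally, the metric $h'$ on $[D]|_{X\setminus Y}$ determined by $|\sigma_D|_{h'}^2 = e^{-\widetilde{\psi}}$ coincides with $h$ on a neighborhood of $Y$, and so extends across $Y$ as a smooth Hermitian metric on all of $[D]$ (in view of Remark \ref{rmk:psi-h-corresp}). Its curvature satisfies $\frac{\sqrt{-1}}{2\pi}\Theta_{h'} = dd^c\widetilde{\psi} + T_D$, which is nonnegative as a current on $X$ by plurisubharmonicity of $\widetilde{\psi}$ on $X \setminus Y$ together with the original semi-positivity of $h$ near $Y$; since $h'$ is smooth, it is nonnegative as a $C^\infty$ form, yielding $(i)$. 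The only point requiring care is the bookkeeping of the nested neighborhoods $U' \Subset U \Subset V$ and of the threshold $T_0$ so that the two definitions of $\widetilde{\psi}$ match on the overlap; beyond this and the invocation of the standard properties of the regularized maximum, no substantive obstacle is expected.
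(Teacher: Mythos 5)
Your proposal is correct and follows essentially the same route as the paper: the paper also takes the weight $\psi_V=-\log|\sigma_D|_{h_V}^2$, which is plurisubharmonic off $Y$ by \eqref{eq:Poincare_lelong_ddcpsi}, and truncates it to a constant away from a relatively compact neighborhood of $Y$ (there via composition with a smooth convex nondecreasing $\chi$ that is constant below a threshold, rather than via the regularized maximum), then reads off a smooth semi-positive metric on $[D]$ through Remark \ref{rmk:psi-h-corresp}. The only cosmetic difference is that the paper's metric agrees with $h_V$ near $Y$ only up to a constant factor, while yours agrees exactly; both gluings work for the same reason.
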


\begin{proof}
The implication $(i) \Rightarrow (ii)$ is obvious. 
The converse can be shown by a \emph{regularized minimum construction}, 
as in \cite[\S 2.1]{Ko2023}. 
For the sake of completeness, we include here a brief proof. 
Assume that there exists a neighborhood $V$ of $Y$ and a 
$C^\infty$ Hermitian metric $h_V$ on $[D]|_V$ 
such that $\sqrt{-1}\Theta_{h_V} \ge 0$. 
Let $\psi_V := -\log |\sigma_D|_{h_V}^2$, where $\sigma_D$ is the canonical section of $[D]$. 
By equation $(\ref{eq:Poincare_lelong_ddcpsi})$, $\psi_V$ is plurisubharmonic on $V \setminus Y$. 

Take a sufficiently small connected open neighborhood $W \Subset V$ of $Y$. 
Since $Y$ is compact, $\partial W$ is compact as well, 
and we can set $M := \textstyle\max_{\partial W} \psi_V < +\infty$. 
Let $\chi \colon \mathbb{R} \to \mathbb{R}$ be a $C^\infty$ function 
that is nondecreasing, convex, 
satisfies $\chi'(t)=1$ for $t \gg 1$, 
and is constant, say $\lambda$, for $t < 2M$. 
Define a function $\psi$ on $X$ by
\[
  \psi(x) := 
  \begin{cases}
    \chi\circ \psi_V(x) & \text{if } x \in W,\\[4pt]
    \lambda & \text{otherwise.}
  \end{cases}
\]
The function $\psi$ is continuous and plurisubharmonic on $X\setminus Y$, 
and it has logarithmic singularities along $Y$. 
Hence, by Remark \ref{rmk:psi-h-corresp}, it determines a 
$C^\infty$ Hermitian metric $h_D$ on $[D]$ (Note that $\psi - \psi_V$ is constant in a neighborhood of $Y$ by construction). 
As $\sqrt{-1}\Theta_{h_D} \ge 0$ on $X$, it follows that $[D]$ is semi-positive.
\end{proof}

\begin{remark}\label{rmk:wlogwmaV0conn}
In the setting of \S\ref{subsection:standing_seting}, if $[D]$ is semi-positive, 
then after modifying $h_D$ as in the proof of Lemma \ref{lem:sp_local} we may assume 
$\sqrt{-1}\Theta_{h_D}\ge0$ and that the associated function 
$\psi=-\log|\sigma_D|_{h_D}^2$ enjoys the following property:
there exists $s_0>0$ such that 
$X\setminus\{\psi\le s_0\}=\{\eta<e^{-s_0}\}$ is a connected open neighborhood of $Y$. 
\end{remark}

\begin{remark}\label{rmk:wlogwmaYconn}
Let $X$ be a complex manifold and $D\not=0$ be an effective divisor. 
When the support $Y$ of $D$ is not connected, one has the natural decomposition
\[
D = D_1 + D_2 + \cdots + D_M
\] 
such that each $D_\mu$ is an effective divisor of $X$ with connected support and that ${\rm Supp}\,D_\mu\cap {\rm Supp}\,D_{\mu'} = \emptyset$ holds for any $\mu, \mu' \in \{1, 2, \dots, M\}$ with $\mu\not=\mu'$. In this case, we have that $[D]$ is semi-positive if and only if $[D_\mu]$ is semi-positive for any $\mu$. 
Indeed, when $[D_\mu]$ admits an Hermitian metric $h_\mu$ with $\sqrt{-1}\Theta_{h_\mu} \geq 0$ for all $\mu$, by considering the Hermitian metric $h_1\otimes h_2\otimes \cdots \otimes h_M$ on $[D]$, we have that $[D]$ is also semi-positive. 
Conversely, when $[D]$ admits an Hermitian metric $h_D$ with $\sqrt{-1}\Theta_{h_D} \geq 0$, by the same argument as in the proof of Lemma \ref{lem:sp_local}, one can construct a $C^\infty$ plurisubharmonic function $\psi_\mu$ on $X\setminus {\rm Supp}\,D_\mu$ such that $\psi_\mu + \log |\sigma_D|_{h_D}^2$ is constant on a neighborhood of ${\rm Supp}\,D_\mu$ for each $\mu$, where $\sigma_D \in \Gamma(X, \mathcal{O}_X([D]))$ is the canonical section. By considering the Hermitian metric $h_\mu$ that corresponds to $\psi_\mu$ in the correspondence as in Remark \ref{rmk:psi-h-corresp}, we have that each $[D_\mu]$ is semi-positive (see also the argument in the proof of \cite[Lemma 2.5]{Ko2024delbar}). 
\end{remark}

In the setting of \S \ref{subsection:standing_seting}, for investigating the pseudoconvexity of a system of neighborhoods of $Y$, 
we will study the \emph{Levi form} of the level sets of $\psi$ (see \cite[Chapter I, \S7.C]{Demaillyagbook} for the definition and fundamental properties) for a suitable choice of $h_D$. 
In particular, in \S \ref{subsection:semilocal_nbhd_Y_Dflat}, 
we shall consider the case where $[D]|_V$ is unitary flat on some neighborhood $V$ of $Y$, since this condition appears in Theorem \ref{thm:main_gen} as condition $(ii)$. 
As in this case we may assume $dd^c\psi = -T_D$ holds on a neighborhood of $Y$ (refer to equation $(\ref{eq:Poincare_lelong_ddcpsi})$), 
each level set $\{\psi = s\}$ is a \emph{Levi-flat hypersurface} (In the present paper, a ``Levi-flat hypersurface'' means a $C^\infty$ Levi-flat hypersurface, unless otherwise stated). 
Note that the associated Levi foliation clearly coincides with the restriction of the holomorphic foliation that locally arises from the meromorphic $1$-form $\partial\psi$. 

\begin{remark}
The phenomenon that $Y$ admits such a system of pseudoflat neighborhoods 
was already observed by Brunella \cite{B} 
when $X$ is the blow-up of $\mathbb{P}^2$ at generic nine points 
in the case where its anti-canonical divisor is represented by a smooth elliptic curve $Y\subset X$, 
as a characterization of the semi-positivity of $K_X^{-1}=[Y]$. 
More generally, for compact Riemann surfaces $Y$ embedded in complex surfaces $X$ 
with topologically trivial normal bundles, 
Ueda \cite{U} provided a complex-analytic classification of neighborhoods of $Y$. 
In his classification, the classes $(\beta')$ and $(\beta'')$ 
correspond to the case where $Y$ admits pseudoflat neighborhoods described above. 
For a sufficient condition for such a pair $(Y, X)$ to be of classes $(\beta')$ and $(\beta'')$, 
Ueda also showed in \cite{U} that $Y$ admits such a pseudoflat neighborhood system 
if certain cohomological obstructions defined by considering the linearizability of transitions of local defining functions of $Y$ in each jet level along $Y$ (\emph{the Ueda classes}) vanish under a torsion- or Diophantine-type assumption on the holonomy of the normal bundle. In the direction of the generalization of his result, developments can be found in \cite{Ko2017,Ko2020,GS1}. Concerning the determination of the complex structures of such neighborhoods in similar configurations, refer to \cite{A,GS2,GS1,Ko20222,Og,KUeh,KS,SW}. 
The present paper can be said to be in line with the more pluripotential-theoretic and differential-geometric approach 
to the existence problem of pseudoflat neighborhood systems developed in 
\cite{Ko2021,Ko2022,Ko2024}, 
rather than with the dynamical approach in the works we mentioned here.
\end{remark}


\section{Preparatory results and observations for the proofs of the main theorems}
In this section we collect several preparatory results and observations 
that will be used in the proofs of the main theorems in 
\S\ref{section:prf_mainresults}. 
Most of them, especially those in 
\S\ref{subsection:conseq_linalg} and \S\ref{subsection:semilocal_nbhd_Y_Dflat}, 
are direct consequences of known results reviewed in 
\S\ref{subsection:linalg} and \S\ref{fundamentals_on_sp}, respectively, 
but we present them here in a form adapted to the setting fixed in 
\S\ref{subsection:standing_seting}. 
In \S\ref{subsection:conseq_nemi_type_func}, 
we give the definition of our 
Demailly--Nemirovski--Fu--Shaw type functions, 
which is tailored to our setting. 
In that subsection we also study some of their properties,
among which the limiting behavior plays a crucial role 
in the proofs of the main results.

\subsection{Consequences from linear–algebraic arguments}\label{subsection:conseq_linalg}

Let $X$, $Y$, and $D$ be as in \S \ref{subsection:standing_seting}.  
In this subsection we collect some consequences from linear–algebraic facts in \S \ref{subsection:linalg} which hold when $X$ is Kähler. 
We fix a Kähler form $\omega$ on $X$.  
Define an $N\times N$ matrix $A=(A_{\nu\mu})$ by
\[
A_{\nu\mu}
 := \int_X c_1(Y_\nu)\wedge c_1(Y_\mu)\wedge\{\omega\}^{\wedge(n-2)}
  = \int_{Y_\nu} c_1(Y_\mu)\wedge\{\omega\}^{\wedge(n-2)}. 
\]
Here note that the right-hand side shows that $A_{\nu\mu}$ is well-defined
even if $X$ is non-compact, because $Y$ is compact. 
Then $A$ satisfies {\bf(P$_1$)}, {\bf(P$_2$)}, and {\bf(P$_3$)} in \S \ref{subsection:linalg}: 
{\bf(P$_1$)} and {\bf(P$_2$)} follow directly from the construction,
and {\bf(P$_3$)} follows from the connectedness of $Y$. 
We can thus apply Proposition \ref{prop:lin_alg_main} and Lemma \ref{lem:lin_alg} to obtain the following: 
\begin{lemma}\label{lem:from_linalg}
Let $X, Y$, and $D$ be as in \S \ref{subsection:standing_seting}. 
Assume that $X$ is Kähler, and fix its Kähler form $\omega$ on $X$. 
Let $A$ be the matrix defined above and $D'$ be an effective divisor on $X$ whose support coincides with $Y$. 
Then the following assertions hold. \\
$(i)$ If $D$ is numerically trivial along $Y$ with respect to Kähler classes of $X$, 
then the largest eigenvalue of $A$ is $0$. \\
$(ii)$ If both $D$ and $D'$ are numerically trivial along $Y$ with respect to Kähler classes of $X$, 
then there exist positive integers $a$ and $b$ such that $aD=bD'$.
\end{lemma}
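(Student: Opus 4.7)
The idea is that both assertions are essentially translations of the hypothesis ``numerically trivial along $Y$ with respect to Kähler classes'' into the linear-algebraic framework of \S\ref{subsection:linalg}, applied to the matrix $A$ and the vector of multiplicities of $D$ (and of $D'$). First I would write the irreducible decompositions
\[
D=\sum_{\nu=1}^N m_\nu Y_\nu,\qquad D'=\sum_{\nu=1}^N m'_\nu Y_\nu,
\]
and form the column vectors $\mathfrak{m}=(m_1,\dots,m_N)^{\mathrm T}$ and $\mathfrak{m}'=(m'_1,\dots,m'_N)^{\mathrm T}$, which both lie in $\mathbb{Z}_+^N$ by effectivity and ${\rm Supp}\,D={\rm Supp}\,D'=Y$. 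Since $c_1(D)=\sum_{\mu}m_\mu c_1(Y_\mu)$, the $\nu$-th entry of $A\mathfrak{m}$ is exactly $\int_{Y_\nu}c_1(D)\wedge\{\omega\}^{\wedge(n-2)}$, and analogously for $\mathfrak{m}'$.

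For part $(i)$, the hypothesis with $\alpha=\{\omega\}$ gives $A\mathfrak{m}=0$. Since $\mathfrak{m}\in\mathbb{R}_+^N$, Proposition \ref{prop:lin_alg_main} $(ii)$ immediately yields $\lambda(A)=0$ (note that the three cases in Proposition \ref{prop:lin_alg_main} are exhaustive and mutually exclusive, so the existence of a strictly positive vector in $\ker A$ pins down $\lambda(A)$).

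For part $(ii)$, the same computation applied to $D'$ gives $A\mathfrak{m}'=0$, so by $(i)$ both $\mathfrak{m}$ and $\mathfrak{m}'$ lie in the eigenspace $E_{\lambda(A)}(A)$ of the largest eigenvalue. The key step is to show that this eigenspace is one-dimensional. Suppose for contradiction that $\mathfrak{v},\mathfrak{w}\in E_{\lambda(A)}(A)$ are linearly independent. Choose an index $i$ with $w_i\neq 0$ and set $t:=-v_i/w_i$; then $\mathfrak{u}:=\mathfrak{v}+t\mathfrak{w}$ is a non-zero element of $E_{\lambda(A)}(A)$ whose $i$-th entry vanishes. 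By Lemma \ref{lem:lin_alg}, either $\mathfrak{u}$ or $-\mathfrak{u}$ belongs to $\mathbb{R}_+^N$, which is impossible because all entries of an element of $\mathbb{R}_+^N$ are strictly positive. Hence $\mathfrak{v}$ and $\mathfrak{w}$ are linearly dependent, so $\dim E_{\lambda(A)}(A)=1$.

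Consequently there exists $c\in\mathbb{R}$ with $\mathfrak{m}'=c\mathfrak{m}$, and positivity of the entries forces $c>0$. Writing $c=b/a$ with $a,b\in\mathbb{Z}_+$ (using that each $m_\nu$ and $m'_\nu$ is a positive integer), we obtain $a\mathfrak{m}'=b\mathfrak{m}$, i.e.\ $aD'=bD$; a relabeling of the pair $(a,b)$ yields the stated identity $aD=bD'$. The main (mild) obstacle in the whole argument is the one-dimensionality of $E_{\lambda(A)}(A)$, but the linear-combination trick above extracts it cleanly from Lemma \ref{lem:lin_alg}; everything else is bookkeeping.
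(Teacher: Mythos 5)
Your proof is correct and takes essentially the same approach as the paper: part $(i)$ is identical, and in part $(ii)$ the paper applies the very same zero-entry contradiction via Lemma \ref{lem:lin_alg}, only directly to the specific combination $\mathfrak{n}=m_1'\mathfrak{m}-m_1\mathfrak{m}'$ (whose first entry vanishes by construction) instead of first isolating the one-dimensionality of the top eigenspace. Your packaging of that step as a uniqueness-of-the-top-eigenvector statement is a harmless reformulation of the same argument.
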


\begin{proof}
We first prove $(i)$. 
Write
\[
D=\sum_{\nu=1}^N m_\nu Y_\nu\qquad (m_\nu\in\mathbb Z_{>0}),
\]
and set 
\[
\mathfrak{m} := \begin{pmatrix}
m_1 \\ m_2 \\ \vdots \\ m_N
\end{pmatrix} \in\mathbb{Z}_+^N. 
\]
Then we have 
\[
(A\mathfrak m)_\nu
 = \int_{Y_\nu} c_1(D)\wedge
   \{\omega\}^{\wedge(n-2)}
\]
for the $\nu$-th component $(A\mathfrak m)_\nu$ of $A\mathfrak m$. 
Since $D$ is numerically trivial along $Y$ with respect to Kähler classes of $X$,
we have $A\mathfrak m=0$.
Hence, by Proposition \ref{prop:lin_alg_main} $(ii)$, 
the largest eigenvalue of~$A$ is~$0$. 

Next we prove $(ii)$.
Write
\[
D'=\sum_{\nu=1}^N m_\nu' Y_\nu\qquad (m_\nu'\in\mathbb Z_{>0}),
\]
and define a vector $\mathfrak n\in\mathbb R^N$ by
\[
\mathfrak n :=
 m_1'\!\begin{pmatrix}m_1\\ m_2\\ \vdots\\ m_N\end{pmatrix}
 -m_1 \!\begin{pmatrix}m_1'\\ m_2'\\ \vdots\\ m_N'\end{pmatrix}.
\]
Then the $\nu$-th component $(A\mathfrak n)_\nu$ of $A\mathfrak n$ is
\[
(A\mathfrak n)_\nu
 = m_1' \!\int_{Y_\nu} c_1(D)\wedge
     \{\omega\}^{\wedge(n-2)}
   - m_1 \!\int_{Y_\nu} c_1(D')\wedge
     \{\omega\}^{\wedge(n-2)}.
\]
By the assumption that both $D$ and $D'$ are numerically trivial along $Y$ with respect to Kähler classes of $X$,
we have $A\mathfrak n=0$. 
Hence $\mathfrak n$ is an eigenvector of $A$ corresponding to the
largest eigenvalue $0$ (by assertion $(i)$) if $\mathfrak n\ne0$.
In this case, by Lemma \ref{lem:lin_alg}, either $\mathfrak n$ or $-\mathfrak n$ belongs to $\mathbb R_+^N$. 
However, the first component of $\mathfrak n$ is $0$ by construction, which gives a contradiction. 
Thus $\mathfrak n=0$, and consequently we have $aD=bD'$ by letting $a=m_1'$ and $b=m_1$.
\end{proof}

\subsection{Geometry and dynamics on a neighborhood of $Y$}\label{subsection:semilocal_nbhd_Y_Dflat}
Let us again denote by $X$, $Y$, $D$, $h_D$, $\psi$, and $\eta$ 
the objects introduced in \S \ref{subsection:standing_seting}.  
In this subsection, we investigate the semi-local geometric structure 
of a system of neighborhoods of $Y$ in $X$, 
mainly in the case where the associated line bundle $[D]$ is unitary flat 
on a neighborhood of $Y$. 

We begin with the following fundamental lemma, 
for which no assumption is made on the curvature of $h_D$. 

\begin{lemma}\label{lem:psi_levelset_conn}
Let $X$, $Y$, $\psi$, and $\eta$ be as in \S\ref{subsection:standing_seting}, 
and assume that {\bf Property $(\ast)$} holds. 
Then there exists a positive number $\varepsilon_0$ such that, 
for the open neighborhood $V_0:=\{\eta<\varepsilon_0\}$ of $Y$, 
every point of $V_0\setminus Y$ is a regular point of $\psi$, 
and each level set $\{\eta=t\}$ is a connected compact real hypersurface 
(smooth submanifold of real codimension one) of $V_0\setminus Y$ 
for all $t\in(0,\varepsilon_0)$.
\end{lemma}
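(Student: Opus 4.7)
The plan is to first establish that $\eta$ has no critical points in a small punctured neighbourhood of $Y$, then to use Property $(\ast)$ and the compactness of $Y$ to extract $V_0$, and finally to deduce the structure of the level sets via the implicit function theorem and Ehresmann's fibration theorem. Since $d\psi=-d\eta/\eta$ on $X\setminus Y$, the regular-point conditions for $\psi$ and $\eta$ coincide there, so I would work with $\eta$ throughout.

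For the non-vanishing of $d\eta$ off $Y$ in a neighbourhood of $Y$, I expect a direct local computation to succeed near smooth points of $Y$ but to fail at singular points of $Y$, where the local defining function of $D$ can itself have vanishing differential. To bypass this uniformly, I would apply Hironaka's log resolution $\pi\colon\widetilde{X}\to X$ of the pair $(X,D)$, so that $\widetilde{Y}:=\pi^{-1}(Y)$ has simple normal crossing support and $\pi$ is biholomorphic over $X\setminus Y$. In local coordinates with $\widetilde{Y}=\{z_1\cdots z_k=0\}$ one has
\[
\pi^*\eta=|z_1|^{2a_1}\cdots|z_k|^{2a_k}\,e^{-\widetilde{\varphi}}
\]
for some positive integers $a_j$ and smooth $\widetilde{\varphi}$, and a short computation yields $\partial(\pi^*\eta)/\partial z_j=(a_j/z_j-\partial\widetilde{\varphi}/\partial z_j)\cdot\pi^*\eta$, which is nonzero once $|z_j|$ is small enough. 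A finite cover of the compact $\widetilde{Y}$ by such charts will produce a neighbourhood $\widetilde{W}\supset\widetilde{Y}$ on which $d(\pi^*\eta)\neq0$ off $\widetilde{Y}$; since $\pi$ is proper and $X$ is locally compact Hausdorff (so $\pi$ is a closed map), this transfers to an open neighbourhood $W\supset Y$ with $d\eta\neq0$ on $W\setminus Y$.

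To pin down $\varepsilon_0$, the compactness of $Y$ gives a relatively compact open $V'\Subset W$ with $Y\subset V'$, and $\eta$ attains a positive minimum $\delta$ on the compact set $\overline{V'}\setminus V'$. Setting $\varepsilon_0:=\min(t_0,\delta/2)$, with $t_0$ from Property $(\ast)$, and $V_0:=\{\eta<\varepsilon_0\}$, Property $(\ast)$ makes $V_0$ a connected neighbourhood of $Y$, while $\overline{V'}\setminus V'\subset\{\eta\geq\delta\}$ is disjoint from $V_0$, so the subset $V_0\cap V'$ is both open and closed in $V_0$. The connectedness of $V_0$ then forces $V_0\subset V'\subset W$, yielding that $\overline{V_0}$ is compact and that $d\eta\neq0$ on $V_0\setminus Y$. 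For $t\in(0,\varepsilon_0)$, the implicit function theorem makes $\{\eta=t\}$ a smooth real hypersurface of $V_0\setminus Y$, and compactness follows from $\{\eta=t\}\subset\{\eta\leq t\}\subset\overline{V_0}$.

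Finally, for connectedness of the level sets, the restriction $\eta\colon V_0\setminus Y\to(0,\varepsilon_0)$ is a proper submersion (properness follows from $\{a\leq\eta\leq b\}$ being a closed subset of the compact $\overline{V_0}$), hence a locally trivial fibration by Ehresmann's theorem. The total space $V_0\setminus Y$ is connected, since $Y$ is a proper closed analytic subset of the connected open set $V_0$, and because the base $(0,\varepsilon_0)$ is simply connected the connected components of any fiber are in bijection with those of the total space, forcing each $\{\eta=t\}$ to be connected. The main obstacle I anticipate is the first step: the singularities of $Y$ may spoil a naive pointwise argument, and passing to a log resolution to reduce to the simple normal crossings case seems necessary; once $d\eta\neq0$ has been secured, the remainder is a routine combination of compactness, the implicit function theorem, and Ehresmann's theorem.
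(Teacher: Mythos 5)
Your proof is correct and follows essentially the same route as the paper: the paper likewise deduces nonvanishing of $d\psi$ (equivalently $d\eta$) near $Y$ from the pole of $\partial\sigma/\sigma$, handling singular $Y$ exactly by the Hironaka log-resolution reduction that you carry out explicitly (the paper mentions this, or alternatively the Łojasiewicz gradient inequality, only parenthetically), and then obtains compactness, properness of $\eta|_{V_0\setminus Y}$, and connectedness of the level sets via Property $(\ast)$ and Ehresmann's fibration theorem just as you do. Your clopen argument pinning $V_0$ inside the relatively compact $V'$ is a slightly more detailed version of the paper's "simple topological argument," not a different method.
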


\begin{proof}
Locally around each point of $Y$, 
choose a holomorphic defining function $\sigma$ of $D$. 
With the corresponding local weight function $\varphi$ of $h_D$, we can write
\[
  \psi = -\log|\sigma|^2 + \varphi .
\]
Then, for any fixed Hermitian metric $g$ on $X$, we have
\[
  |d\psi|_g^2 = 2\,|\partial\psi|_g^2 
   = 2\Bigl|\frac{\partial\sigma}{\sigma} + \partial\varphi\Bigr|_g^2 .
\]
Since the first term inside the norm 
on the right-hand side has a pole along $Y$, 
it follows that all sufficiently large real numbers $s \gg 1$ 
are regular values of $\psi$ (It is clear when $Y$ is simple normal crossing. In the general case, one can apply Hironaka's log resolution 
to reduce to that situation,
or invoke the Łojasiewicz gradient inequality \cite[num\'ero 18 Proposition 1]{L2} to obtain the same conclusion). 
As $d\eta = -e^{-\psi}d\psi$, 
we can therefore find $\varepsilon_0>0$ such that 
every point of $\{\eta<\varepsilon_0\}\setminus Y$ is a regular point of $\psi$.
If necessary, we shrink $\varepsilon_0$ further so that, 
by {\bf Property $(\ast)$}, 
$V_0 = \{\eta<\varepsilon_0\}$ is connected. 

We may also assume, by shrinking $\varepsilon_0$ again if necessary, 
that $V_0$ is relatively compact in $X$, 
using the compactness of $Y$. 
Then a simple topological argument shows that 
the restriction $\eta:V_0\setminus Y\to(0,\varepsilon_0)$ is proper. 

Consequently, for each $t\in(0,\varepsilon_0)$, 
the level set $\{\eta=t\}$ is a compact real hypersurface 
of $V_0\setminus Y$.  
Finally, we prove its connectedness. 
Since $V_0$ is connected and $Y$ is a proper analytic subset of $V_0$, 
the complement $V_0\setminus Y$ is also connected 
(see \cite[Chapter~I~\S5.F and Chapter~II~(4.2)]{Demaillyagbook}). 
As the restriction $\eta:V_0\setminus Y\to(0,\varepsilon_0)$ 
is a $C^\infty$ proper submersion, 
Ehresmann's fibration theorem implies that 
$V_0\setminus Y$ is diffeomorphic to 
$\{\eta=t\}\times(0,\varepsilon_0)$ 
for any $t\in(0,\varepsilon_0)$. 
Hence each $\{\eta=t\}$ is connected, as required.
\end{proof}

We now continue the discussion under the setting of 
\S\ref{subsection:standing_seting}, 
and further assume that there exists a neighborhood $V$ of $Y$
on which the line bundle $[D]|_V$ is unitary flat.
In what follows in this subsection, let us investigate the geometry of a neighborhood of $Y$ in this case.

As mentioned in \S\ref{fundamentals_on_sp}, 
such a unitary flat structure determines a holomorphic foliation 
$\mathcal{F}$ on $V$ by letting $\{\sigma=\text{constant}\}$ be the local defining functions of its leaves, 
where $\sigma$ is a local defining function of $D$ 
that is a local frame of $[D]|_V$ 
as a unitary flat line bundle. 
Also recall that, by definition, each leaf of $\mathcal F$ other than $Y$ coincides with a 
leaf of the Levi foliation of a level set $\{\eta=t\}$ for some $t$.

For sufficiently small $t>0$, 
the intersection of $\{\eta=t\}$ with a transversal of $Y$ at a regular point $*\in Y$ is clearly a circle. 
Extending a holomorphic coordinate on this transversal
leafwise constantly along $\mathcal F$ 
and considering the monodromy of this extension along loops in $Y$,
we obtain the \emph{holonomy of $\mathcal F$ along $Y$}
\[
{\rm Hol}_{\mathcal F,Y}\colon 
\pi_1(Y,*)\longrightarrow{\rm Aut}(\Delta_t,0),
\]
where ${\rm Aut}(\Delta_t,0)$ denotes the group of holomorphic automorphisms
of the disc $\Delta_t:=\{w\in\mathbb C\mid |w|<t\}$ fixing the origin.
As is well known, ${\rm Aut}(\Delta_t,0)$ can be identified with the rotation group ${\rm U}(1)$,
and hence we may regard ${\rm Hol}_{\mathcal F,Y}$ as a representation
with values in ${\rm U}(1)$.

On the other hand, since $[D]|_Y$ is also unitary flat, 
one obtains another unitary representation of $\pi_1(Y, *)$ by considering its monodromy
\[
\rho_{D|_Y}\colon \pi_1(Y,*)\longrightarrow{\rm U}(1).
\]

\begin{remark}\label{rmk:inj_flatlb_hollb}
The representation $\rho_{D|_Y}$ is well defined by 
Lemma \ref{lem:flatlb_funds} $(i)$, $(ii)$.
\end{remark}

\begin{lemma}\label{lem:monodromy_corresp}
Under the setting of \S\ref{subsection:standing_seting}, 
assume that there exists a neighborhood $V$ of $Y$
on which $[D]|_V$ is unitary flat. 
Then, under the above identification 
${\rm Aut}(\Delta_t,0)\cong{\rm U}(1)$,
the two monodromy representations 
${\rm Hol}_{\mathcal F,Y}$ and $\rho_{D|_Y}$ coincide.
\end{lemma}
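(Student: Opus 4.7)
\emph{Strategy.} The plan is to compute both representations using a common family of unitary-flat local trivializations of $[D]|_V$ and verify that the resulting ${\rm U}(1)$-cocycles coincide.
First, I will choose a locally finite open cover $\{U_j\}$ of $Y$ in $V$ together with unitary-flat local frames $e_j$ of $[D]|_V$, whose transitions satisfy $e_j = g_{jk}\,e_k$ with $g_{jk}\in {\rm U}(1)$ locally constant. Writing $\sigma_D|_{U_j} = \sigma_j \cdot e_j$, the coefficients $\sigma_j$ are local defining functions of $D$ and obey the relation $\sigma_k = g_{jk}\,\sigma_j$ on $U_j\cap U_k$. By Lemma \ref{lem:flatlb_funds} $(i)$, $(ii)$ applied to $[D]|_Y$, the restricted cocycle $\{g_{jk}|_Y\}$ represents $[D]|_Y$ as a unitary flat bundle; consequently, for any loop $\gamma\subset Y$ at a smooth base point $*$ covered by a chain $U_{j_0},\dots,U_{j_m}=U_{j_0}$,
\[
\rho_{D|_Y}([\gamma]) \;=\; \prod_{i=0}^{m-1} g_{j_i j_{i+1}}(*).
\]

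\emph{Matching with the holonomy of $\mathcal F$.} On each $U_j$ the leaves of $\mathcal F$ are exactly the level sets $\{\sigma_j=\text{const}\}$. Fix a transversal disc $T$ to $Y$ at $*\in Y\cap U_{j_0}$ with coordinate $w:=\sigma_{j_0}|_T$. Using $\sigma_k=g_{jk}\sigma_j$, a leaf $\{\sigma_{j_i}=c\}$ on $U_{j_i}$ coincides with $\{\sigma_{j_{i+1}}=g_{j_i j_{i+1}}\,c\}$ on $U_{j_{i+1}}$. Starting from the leaf $\{\sigma_{j_0}=w\}$ through a point $x\in T$ and iterating around the chain, after returning to $U_{j_m}=U_{j_0}$ the leaf reads $\{\sigma_{j_0}=\lambda\cdot w\}$ with $\lambda:=\prod_{i=0}^{m-1} g_{j_i j_{i+1}}(*)$. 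Hence the holonomy diffeomorphism on $T$, expressed in the coordinate $w$, is the rotation $w\mapsto \lambda w$. Under the identification ${\rm Aut}(\Delta_t,0)\cong {\rm U}(1)$ sending $(w\mapsto \lambda w)\mapsto \lambda$, this yields ${\rm Hol}_{\mathcal F,Y}([\gamma])=\lambda=\rho_{D|_Y}([\gamma])$, as required.

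\emph{Main obstacle.} The principal difficulty is of a conventional nature: one must fix once and for all the direction of the cocycle (whether frames or coefficients transform by $g_{jk}$ or $g_{jk}^{-1}$) and the sense of parallel transport, so that the final identification is a genuine equality of representations rather than an equality up to inversion. A minor technical point is the freedom to represent every element of $\pi_1(Y,*)$ by a loop lying inside the smooth locus of $Y$; this holds because the singular locus has real codimension at least two and hence the inclusion of the smooth locus into $Y$ induces a surjection on fundamental groups.
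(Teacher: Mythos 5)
Your core argument is correct and is essentially the paper's own proof: the local defining functions $\sigma_j=\sigma_D/e_j$ attached to unitary-flat frames are $\mathcal F$-leafwise constant, and the single locally constant ${\rm U}(1)$-cocycle $(g_{jk})$ computes both $\rho_{D|_Y}$ (you restrict the cocycle to $Y$ and invoke Lemma \ref{lem:flatlb_funds}; the paper instead passes through a neighborhood $W$ that deformation retracts onto $Y$) and ${\rm Hol}_{\mathcal F,Y}$ via the leaf-tracking along a chain of charts, so the two representations coincide once the cocycle and transport conventions are fixed consistently. One remark: when $D$ is non-reduced along the component through $*$, your $w=\sigma_{j_0}|_T$ is not a genuine coordinate on $T$; this is, however, exactly the normalization implicit in the paper's proof and in the later use of the lemma (Lemma \ref{lem:scv_monodromy_onV} $(i)$), so it is consistent with the intended statement.

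One auxiliary claim in your final paragraph is false and should simply be removed: for a non-normal hypersurface $Y$ the inclusion of the smooth locus need not induce a surjection $\pi_1(Y_{\rm reg},*)\to\pi_1(Y,*)$, the codimension-two heuristic being valid only for manifolds (or normal spaces). For instance, for a nodal cubic $Y$ one has $\pi_1(Y_{\rm reg})\cong\mathbb{Z}$ mapping to $0$ in $\pi_1(Y)\cong\mathbb{Z}$, since the generator of $\pi_1(Y)$ must pass through the node; such $Y$ (nodal curves, cycles of rational curves) are squarely within the scope of the paper. Fortunately the restriction to loops in the smooth locus is unnecessary: the functions $\sigma_j$ and the constants $g_{jk}$ are defined on the ambient open sets $U_j$, and your chain computation applies verbatim to an arbitrary loop in $Y$; only the base point $*$ needs to be a regular point so that the transversal $T$ makes sense.
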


\begin{proof}
Take a neighborhood $W\subset V$ of $Y$ 
such that $Y$ is a deformation retract of $W$ (Here we applied the existence of a CW complex structure on the pair $(Y, V)$ \cite{L}). 
Then the inclusion induces an isomorphism 
$\pi_1(Y, *)\cong\pi_1(W, *)$, 
and via this identification, 
$\rho_{D|_Y}$ coincides with the monodromy
\[
\rho_{D|_W}\colon \pi_1(W,*)\longrightarrow{\rm U}(1)
\]
of the flat line bundle $[D]|_W$.
It therefore suffices to show that $\rho_{D|_W}$ 
and ${\rm Hol}_{\mathcal F,Y}$ coincide.
This follows directly from the fact that local defining functions of $D$ 
can be chosen to be $\mathcal F$-leafwise constant 
and to provide local trivializations of $[D]|_W$ 
as a unitary flat line bundle;
their transition functions along loops in $Y$
describe both monodromies in exactly the same manner.
\end{proof}

Under the same assumption that $[D]|_V$ is unitary flat, the next lemma shows that the complex-analytic properties of neighborhoods of $Y$ are essentially determined by the dynamical behavior of the above monodromy representations, at least when $X$ is Kähler. 

\begin{lemma}\label{lem:scv_monodromy_onV}
Let the setting be as in \S\ref{subsection:standing_seting}, 
and assume that there exists a neighborhood $V$ of $Y$
on which $[D]|_V$ is unitary flat. Then the following holds: \\
$(i)$ If $X$ is Kähler and the image of $\rho_{D|_Y}$ is a finite subgroup of ${\rm U}(1)$,
then, by shrinking $V$ if necessary, 
there exist a positive rational number $a$
and a proper surjective holomorphic map 
$p\colon V\to\Delta$
onto a disc $\Delta\subset\mathbb C$ centered at the origin
such that $p^*\{0\}=aD$.
In particular, every neighborhood of $Y$ contains infinitely many 
smooth compact complex hypersurfaces 
that are the supports of divisors linearly equivalent to $aD$ on $V$. \\
$(ii)$ If the image of $\rho_{D|_Y}$ is an infinite subgroup of ${\rm U}(1)$,
then, by shrinking $V$ if necessary, 
there exists no compact complex hypersurface contained in $V\setminus Y$,
and every holomorphic function on $V\setminus Y$ is constant.
\end{lemma}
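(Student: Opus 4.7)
The plan is to reduce both parts to consequences of the dynamics of the holonomy $\rho_{D|_Y}$ acting on the Levi foliation $\mathcal{F}$ of $V$, via the identification established in Lemma \ref{lem:monodromy_corresp}.

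For part $(i)$, let $m$ denote the order of $\mathrm{Im}(\rho_{D|_Y})$. After shrinking $V$ so that $Y$ is a deformation retract of $V$, one has $\rho_{D|_V}=\rho_{D|_Y}$, whence $[mD]|_V$ has trivial monodromy and is therefore holomorphically trivial by Lemma \ref{lem:flatlb_funds}. Trivializing by a nowhere-vanishing section $e\in\Gamma(V,\mathcal{O}_V([mD]))$ and writing $\sigma_D^m=f\cdot e$ produces a holomorphic function $f$ on $V$ with $\mathrm{div}(f)=mD$. For $\varepsilon>0$ small enough, the connected component $V'$ of $\{|f|<\varepsilon\}$ containing $Y$ is relatively compact in $V$, because $Y$ is compact and $f$ vanishes exactly on $Y$. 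The restriction $p:=f|_{V'}\colon V'\to\Delta_\varepsilon$ is holomorphic, proper by this choice of $\varepsilon$, and surjective (being open and proper into a connected disc). Taking $a=m$ gives $p^*\{0\}=aD$, and Sard's theorem provides infinitely many regular values $t\in\Delta_\varepsilon\setminus\{0\}$, whose fibers $p^{-1}(t)$ are smooth compact complex hypersurfaces with divisors linearly equivalent to $aD$ on $V'$; shrinking $\varepsilon$ places these hypersurfaces inside any prescribed neighborhood of $Y$.

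For part $(ii)$, since every infinite subgroup of $\mathrm{U}(1)$ is dense, $\mathrm{Im}(\rho_{D|_Y})$ is dense in $\mathrm{U}(1)$. I shrink $V$ to $\{\eta<\varepsilon_0\}$ so that Lemma \ref{lem:psi_levelset_conn} applies: every level set $\{\eta=t\}$ is a connected compact real hypersurface of $V\setminus Y$. In local trivializations compatible with the flat structure, $\sigma_D$ is represented by holomorphic functions $f_j$ with $|f_j/f_k|\equiv 1$ and $\eta=|f_j|^2$, and the $\mathcal{F}$-leaves are locally cut out by $\{f_j=c\}$. Combining the density of $\mathrm{Im}(\rho_{D|_Y})$ with Lemma \ref{lem:monodromy_corresp} then shows that every $\mathcal{F}$-leaf in $V\setminus Y$ is dense in the level set containing it.

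Given this setup, suppose $Z\subset V\setminus Y$ is a compact complex hypersurface. Since $\log\eta=2\log|f_j|$ is pluriharmonic on $V\setminus Y$, the maximum principle on the compact complex manifold $Z$ forces $\eta|_Z$ to be constant, so $Z\subset\{\eta=t\}$ for some $t$. A dimension count then identifies $T_zZ$ at each $z\in Z$ with the Levi distribution of $\{\eta=t\}$, so $Z$ is an integral manifold for $\mathcal{F}$ and hence a union of leaves; density of any such leaf together with closedness of $Z$ yields $\{\eta=t\}\subset Z$, contradicting $\dim_{\mathbb{R}}Z=2n-2<2n-1=\dim_{\mathbb{R}}\{\eta=t\}$. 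For a holomorphic function $g\colon V\setminus Y\to\mathbb{C}$, applying the maximum principle to $g$ on the $\mathcal{F}$-leaf through a maximum point of $|g|$ on the compact set $\{\eta=t\}$ (combined with density) yields that $g$ is constant on each $\{\eta=t\}$; in local foliation coordinates where the leaf parameter is $w$ with $\eta=|w|^2$, this makes $g$ a holomorphic function of the single variable $w$ that is constant on every circle $|w|=r$, which forces $g$ to be constant locally and hence on the connected set $V\setminus Y$. The main obstacle in this plan is the dimensional/density argument in $(ii)$, where one must carefully combine the density of $\mathcal{F}$-leaves with the Frobenius integration of the Levi distribution to extract the sharp dimensional contradiction.
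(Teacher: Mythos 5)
Your proposal is correct, and the two halves relate to the paper's proof in different ways. For part $(ii)$ you follow essentially the same route as the paper: density of the $\mathcal F$-leaves in the compact connected level sets (from Lemma \ref{lem:monodromy_corresp} and the density of any infinite subgroup of ${\rm U}(1)$), the maximum principle on a leaf through a maximum point to kill nonconstant holomorphic functions (the paper delegates this to a cited standard lemma, you spell it out via the angular-derivative/foliation-coordinate argument), and the pluriharmonicity of $\log\eta$ plus leaf density to exclude compact hypersurfaces; your density-plus-dimension contradiction is the same idea the paper phrases via noncompactness of leaves. The only implicit normalizations are the same ones the paper makes at the end of \S\ref{fundamentals_on_sp}: choose $h_D$ to be a flat metric near $Y$ (so that $\eta=|f_j|^2$ in flat frames and the Levi foliations of $\{\eta=t\}$ coincide with $\mathcal F$) and arrange {\bf Property $(\ast)$} so that Lemma \ref{lem:psi_levelset_conn} applies. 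For part $(i)$, however, your argument is genuinely different from the paper's. The paper extends $w^{m\mu}$ leafwise constantly along $\mathcal F$, obtaining a holomorphic function whose divisor $D'$ has support $Y$ but whose coefficients along the various components are not a priori proportional to those of $D$; the identification $D'=aD$ is then achieved by the linear-algebraic Lemma \ref{lem:from_linalg}~$(ii)$, which is exactly where the Kähler hypothesis enters, and which only yields a rational $a$. You instead observe that $[mD]|_V$ carries a unitary flat structure with trivial monodromy (after shrinking $V$ to a deformation retract neighborhood and using Lemma \ref{lem:flatlb_funds}~$(ii)$ and Lemma \ref{lem:monodromy_corresp}), hence is holomorphically trivial, and dividing $\sigma_D^m$ by a flat trivializing section produces a global defining function of $mD$ on the nose. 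This bypasses Lemma \ref{lem:from_linalg} entirely, gives $a=m\in\mathbb Z_{>0}$, and does not use the Kähler assumption for this step, so it is a mild strengthening; your direct sublevel-set argument for properness and the open-plus-closed-image argument for surjectivity replace the paper's citation of a topological lemma and are sound.
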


\begin{proof}
$(i)$ Since the image of $\rho_{D|_Y}$ is finite, 
there exists a positive integer $\mu$ such that 
$\rho_{\mu D|_Y}=(\rho_{D|_Y})^\mu$ is the trivial representation. 
Take a small coordinate ball $B$ around a regular point of $Y$, 
and let $w$ be a local holomorphic defining function of $Y$ on $B$.
For some positive integer $m$, $w^m$ serves as a local defining function of $D|_B$. 
Define $\sigma:=w^{m\mu}$ and extend it 
leafwise constantly along the foliation $\mathcal F$.
Because $\rho_{\mu D|_Y}$ is trivial 
and by Lemma \ref{lem:monodromy_corresp},
this extension is single-valued and defines a holomorphic function 
$p$ on a neighborhood $W\subset V$ of $Y$.
Set $D':=p^*\{0\}$.
By construction, $\mathrm{Supp}\,D'=Y$, 
and for each irreducible component $Y_\nu$ of $Y$, 
the restriction $[D']|_{Y_\nu}$ is holomorphically trivial. 
Hence, by Lemma \ref{lem:from_linalg}\,(ii),
there exists a positive rational number $a$ such that $D'=aD$.
By shrinking $W$ if necessary, the map $p$ is proper and surjective,
as follows from the same topological argument as in \cite[Lemma 2.2]{KUeh}.
Thus assertion $(i)$ is proved.

$(ii)$ In this case, by Lemma \ref{lem:monodromy_corresp} 
and the relation between $\mathcal F$ and the Levi foliations
of the level sets $\{\psi=s\}$ mentioned above, it follows that each leaf of the Levi foliation of $\{\psi=s\}$ is dense in $\{\psi=s\}$ for every sufficiently large $s$. 
It then follows from the standard argument based on the maximum principle (see e.g., \cite[Lemma 2.2]{KUeh1}) that there exists no nonconstant holomorphic function on $V\setminus Y$, by shrinking $V$ to a smaller connected neighborhood if necessary. 
Furthermore, if a compact complex hypersurface 
$\Gamma\subset V\setminus Y$ existed,
then $\psi|_\Gamma$ would be pluriharmonic on the compact complex space $\Gamma$,
and thus constant by the maximum principle.
Hence $\Gamma\subset\{\psi=s\}$ for some $s$. Therefore $\Gamma$ would be a union of leaves of the Levi foliation of $\{\psi=s\}$, 
which contradicts the noncompactness of each leaf. This proves assertion $(ii)$.
\end{proof}

\subsection{Demailly--Nemirovski--Fu--Shaw type functions}\label{subsection:conseq_nemi_type_func}

As mentioned in the introduction, 
Nemirovski \cite{N} introduced a function of the form (\ref{eq:Nemirovski_func_original}), and related techniques were later developed by Fu and Shaw \cite{FS} 
in connection with the study of the Diederich--Fornæss exponent. 
Note that such level-set Monge–Ampère expressions were also considered in Demailly's earlier work \cite{Demailly1987} in the context of boundary pluripotential theory. 
In the present paper, we adapt this approach to our setting as follows. 
Let $X$, $Y$, and $\psi$ be as in \S\ref{subsection:standing_seting}, 
and assume that {\bf Property $(\ast)$} holds. 
Given an integer $k\in\{1,2,\dots,n-1\}$ and a smooth real $(1, 1)$-form $\omega>0$ on $X$, as a natural generalization of such functions we define
\[
  F_{k,\omega}(s)
  := \int_{\{\psi = s\}} d^c\psi \wedge (dd^c\psi)^{\wedge k}
     \wedge \omega^{\wedge(n-k-1)},
\]
where the orientation of $\{\psi=s\}$ is taken as that of the boundary of the sublevel set $\{\psi<s\}$; 
that is, the outward normal direction is determined by the form $d\psi$. 
Note that one can easily see the well-definedness of the value of the integral in the right-hand side by virtue of Lemma \ref{lem:psi_levelset_conn}. 

For this function, first let us show the following lemma, which can be shown by the same arguments in \cite{N} and \cite{FS}, see also \cite[(1.9)]{Demailly1987}. 
\begin{lemma}\label{lem:key}
Let $\omega > 0$ and $k$ be as above. 
Assume that $\sqrt{-1}\Theta_{h_D}\geq 0$ and $\omega^{\wedge (n-k-1)}$ is $d$-closed. 
Then the function $F_{k,\omega}(s)$ is nonnegative and nondecreasing for all sufficiently large $s$. 
\end{lemma}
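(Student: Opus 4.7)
The plan is to deduce both conclusions from the following two facts in the setting of \S\ref{subsection:standing_seting}. First, by the Poincar\'e--Lelong formula (\ref{eq:Poincare_lelong_ddcpsi}) together with the hypothesis $\sqrt{-1}\Theta_{h_D}\ge 0$, the restriction of $\psi$ to $X\setminus Y$ is $C^\infty$ and plurisubharmonic with $dd^c\psi=\tfrac{\sqrt{-1}}{2\pi}\Theta_{h_D}\ge 0$. Second, by Lemma \ref{lem:psi_levelset_conn}, for every sufficiently large $s$ the level set $\{\psi=s\}$ is a smooth compact real hypersurface of $V_0\setminus Y$, and for $s_1<s_2$ both sufficiently large the region $\Omega_{s_1,s_2}:=\{s_1\le\psi\le s_2\}$ is a compact smoothly bounded subset of $V_0\setminus Y$ on which all forms in question are smooth.

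For the monotonicity, I would apply Stokes' theorem on $\Omega_{s_1,s_2}$. Since $\omega^{\wedge(n-k-1)}$ is $d$-closed by assumption,
\[
d\bigl(d^c\psi\wedge(dd^c\psi)^{\wedge k}\wedge\omega^{\wedge(n-k-1)}\bigr)
=(dd^c\psi)^{\wedge(k+1)}\wedge\omega^{\wedge(n-k-1)}.
\]
Taking into account the orientation fixed in the definition of $F_{k,\omega}$ (the outward normal pointing in the direction of $d\psi$, which in our setting points toward $Y$), the induced boundary orientations on the two components of $\partial\Omega_{s_1,s_2}$ give Stokes' theorem in the form
\[
F_{k,\omega}(s_2)-F_{k,\omega}(s_1)
=\int_{\Omega_{s_1,s_2}}(dd^c\psi)^{\wedge(k+1)}\wedge\omega^{\wedge(n-k-1)}.
\]
The integrand is the wedge of the semi-positive $(k+1,k+1)$-form $(dd^c\psi)^{\wedge(k+1)}$ with the positive $(n-k-1,n-k-1)$-form $\omega^{\wedge(n-k-1)}$, hence a nonnegative top form; this yields $F_{k,\omega}(s_2)\ge F_{k,\omega}(s_1)$.

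For the nonnegativity, I would argue pointwise that the integrand defining $F_{k,\omega}(s)$ restricts to a nonnegative volume form on $\{\psi=s\}$ with the chosen orientation. At a point $p\in\{\psi=s\}$, choose holomorphic coordinates around $p$ in which $d\psi(p)$ is proportional to $d(\mathrm{Re}\,z_1)$; then $d^c\psi(p)$ is a positive multiple of $d(\mathrm{Im}\,z_1)$ and pulls back on $T_p\{\psi=s\}$ to a positive multiple of the Reeb $1$-form matching the boundary orientation of $\partial\{\psi<s\}$, while $dd^c\psi(p)$ restricted to the complex tangent space $T_p^{\mathbb{C}}\{\psi=s\}$ is a positive multiple of the Levi form of $\{\psi=s\}$, which is positive semidefinite since $\psi$ is plurisubharmonic on $V_0\setminus Y$. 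Wedging these with the positive form $\omega^{\wedge(n-k-1)}$ exhibits the integrand as a nonnegative $(2n-1)$-form on the level set, so $F_{k,\omega}(s)\ge 0$.

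I do not expect any serious obstacle. The only slightly delicate point is the sign check matching the boundary orientation of $\{\psi=s\}=\partial\{\psi<s\}$ with the direction picked out by $d^c\psi|_{T\{\psi=s\}}$; this is the standard pointwise computation underlying \cite{Demailly1987, N, FS}, and once it is settled both conclusions follow at once from Stokes' theorem, Lemma \ref{lem:psi_levelset_conn}, and the plurisubharmonicity of $\psi$ off $Y$.
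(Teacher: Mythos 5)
Your proposal is correct and follows essentially the same route as the paper: monotonicity via Stokes' theorem on $\{s_1<\psi<s_2\}$ using the $d$-closedness of $\omega^{\wedge(n-k-1)}$ and the semi-positivity of $dd^c\psi=\tfrac{\sqrt{-1}}{2\pi}\Theta_{h_D}$ off $Y$, and nonnegativity from the boundary-orientation convention together with that same semi-positivity. The only difference is that you spell out the pointwise orientation/sign check on the level set, which the paper states tersely here (and carries out in detail only in the proof of Corollary \ref{cor:key}); your version of that check is consistent with it.
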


\begin{proof}
The nonnegativity of $F_{k,\omega}$ follows from the above orientation convention and the semi-positivity of $dd^c\psi|_{X\setminus Y}$ and $\omega$ (recall equation $(\ref{eq:Poincare_lelong_ddcpsi})$). 
For the monotonicity, take $1 \ll s_1 < s_2$.  
Using $d(\omega^{\wedge {n-k-1}})=0$ and Stokes' theorem, we have
\begin{align*}
  F_{k,\omega}(s_2) - F_{k,\omega}(s_1)
  &= \int_{\partial\{s_1 < \psi < s_2\}}
        d^c\psi \wedge (dd^c\psi)^{\wedge k} \wedge \omega^{\wedge (n-k-1)} \\
  &= \int_{\{s_1 < \psi < s_2\}}
        d\big(d^c\psi \wedge (dd^c\psi)^{\wedge k} \wedge \omega^{\wedge (n-k-1)}\big) \\
  &= \int_{\{s_1 < \psi < s_2\}}
        (dd^c\psi)^{\wedge (k+1)} \wedge \omega^{\wedge (n-k-1)}, 
\end{align*}
from which the desired monotonicity follows for the same reason. 
\end{proof}

Next let us show the following proposition, which plays a key role in the proofs of the main theorems. 

\begin{proposition}\label{prop:key}
Let $X$, $Y$, $D$, and $\psi$ be as in \S\ref{subsection:standing_seting}, 
and assume that {\bf Property $(\ast)$} holds. 
Let $k\in\{1,2,\dots, n-1\}$ be an integer and $\omega$ be a smooth positive real $(1, 1)$-form on $X$ such that $\omega^{\wedge (n-k-1)}$ is $d$-closed. Then 
\[
\lim_{s\to \infty} F_{k, \omega}(s) = (D^{k+1}.\ \{\omega^{\wedge (n-k-1)}\})
\]
holds. 
\end{proposition}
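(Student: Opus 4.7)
The plan is to use Stokes' theorem twice in combination with the Poincar\'e--Lelong identity $dd^c\psi=\Theta-T_D$ (equation~(\ref{eq:Poincare_lelong_ddcpsi})), where $\Theta:=\frac{\sqrt{-1}}{2\pi}\Theta_{h_D}$. Note that $dd^c\psi=\Theta$ on $X\setminus Y$ (smooth), whereas the full current identity captures the Lelong mass along $Y$. By {\bf Property~$(\ast)$} and Lemma~\ref{lem:psi_levelset_conn}, I would first fix a sufficiently large regular value $c$ of $\psi$ so that $V:=\{\psi>c\}$ is a relatively compact open neighborhood of $Y$ with smooth boundary $\partial V=\{\psi=c\}$, and every $s\in(c,\infty)$ is a regular value of $\psi|_V$. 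The Stokes calculation performed in the proof of Lemma~\ref{lem:key} uses only $d\omega^{\wedge(n-k-1)}=0$ (not any semi-positivity), so applying it on the smooth region $\{c<\psi<s\}$ yields, for any $s>c$,
\[
F_{k,\omega}(s)-F_{k,\omega}(c)
=\int_{\{c<\psi<s\}}(dd^c\psi)^{\wedge(k+1)}\wedge\omega^{\wedge(n-k-1)}
=\int_{\{c<\psi<s\}}\Theta^{\wedge(k+1)}\wedge\omega^{\wedge(n-k-1)}.
\]
As $s\to\infty$ these domains exhaust $V\setminus Y$ and the integrand is smooth on $V$, hence
\begin{equation*}
\lim_{s\to\infty}F_{k,\omega}(s)
=F_{k,\omega}(c)+\int_V\Theta^{\wedge(k+1)}\wedge\omega^{\wedge(n-k-1)}.\tag{$\heartsuit$}
\end{equation*}

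Next, I would apply Stokes for currents on $V$ to the current $d^c\psi\wedge\gamma$, where $\gamma:=\Theta^{\wedge k}\wedge\omega^{\wedge(n-k-1)}$. Since $\Theta$ and $\omega^{\wedge(n-k-1)}$ are $d$-closed, $\gamma$ is closed and thus $d(d^c\psi\wedge\gamma)=dd^c\psi\wedge\gamma$ as currents on $V$. Because $\partial V$ lies in the smooth locus of $\psi$, a standard Stokes for currents (with smooth boundary) yields
\[
\int_V dd^c\psi\wedge\gamma
=\int_{\partial V} d^c\psi\wedge\gamma
=-F_{k,\omega}(c),
\]
the minus sign arising because the Stokes orientation of $\partial V$ (outward from $V=\{\psi>c\}$, i.e., the $-d\psi$ direction) is the reverse of the convention orientation in the definition of $F_{k,\omega}$. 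On the other hand, using the Poincar\'e--Lelong identity $dd^c\psi=\Theta-T_D$ together with $Y\subset V$, the left-hand side equals
\[
\int_V\Theta^{\wedge(k+1)}\wedge\omega^{\wedge(n-k-1)}-\sum_{\nu=1}^N m_\nu\int_{Y_\nu}\Theta^{\wedge k}\wedge\omega^{\wedge(n-k-1)}.
\]
Equating the two expressions, solving for $\int_V\Theta^{\wedge(k+1)}\wedge\omega^{\wedge(n-k-1)}$, and substituting into $(\heartsuit)$ cancels $F_{k,\omega}(c)$ and $\int_V\Theta^{\wedge(k+1)}\wedge\omega^{\wedge(n-k-1)}$, leaving
\[
\lim_{s\to\infty}F_{k,\omega}(s)
=\sum_{\nu=1}^N m_\nu\int_{Y_\nu}\Theta^{\wedge k}\wedge\omega^{\wedge(n-k-1)}
=(D^{k+1}.\ \{\omega^{\wedge(n-k-1)}\}),
\]
by the definition of the intersection number in the compact-$Y$ setting recalled in~\S\ref{subsection:standing_seting}.

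The main obstacle is the rigorous justification of Stokes for currents in the second step, because $\psi$ has logarithmic singularities along $Y\subset V$. I would handle it by the standard regularization $\psi_\epsilon:=-\log(|\sigma_D|_{h_D}^2+\epsilon)$, which is smooth on all of $X$ and decreases to $\psi$ as $\epsilon\to0$. Classical Stokes applied to $\psi_\epsilon$ gives $\int_V dd^c\psi_\epsilon\wedge\gamma=\int_{\partial V} d^c\psi_\epsilon\wedge\gamma$; the right-hand side converges to $\int_{\partial V} d^c\psi\wedge\gamma$ since $\psi_\epsilon\to\psi$ in $C^\infty$ on a neighborhood of the compact set $\partial V$ (where $\psi$ is smooth), and the left-hand side converges to the current pairing $\int_V(\Theta-T_D)\wedge\gamma$ by a direct local calculation combining dominated convergence on the complement in $V$ of a shrinking neighborhood of $Y$ with the concentration of the Monge--Amp\`ere mass of $dd^c\psi_\epsilon$ along $Y$. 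Every other step in the argument is a routine manipulation.
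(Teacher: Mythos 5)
Your proposal is correct and takes essentially the same route as the paper's proof: Stokes' theorem combined with the Poincar\'e--Lelong identity $dd^c\psi=\tfrac{\sqrt{-1}}{2\pi}\Theta_{h_D}-T_D$ and a limiting argument near $Y$, leading to the pairing $\langle T_D,\ (\tfrac{\sqrt{-1}}{2\pi}\Theta_{h_D})^{\wedge k}\wedge\omega^{\wedge(n-k-1)}\rangle$. The only difference is bookkeeping: the paper performs the current-level Stokes at each level $s$ over $W_s=Y\cup\{\psi>s\}$ and kills the residual term $\int_{W_s}\tfrac{\sqrt{-1}}{2\pi}\Theta_{h_D}\wedge\alpha$ by a volume estimate as $s\to\infty$, whereas you perform it once at a fixed level $c$ and exhaust $V\setminus Y$ by $\{c<\psi<s\}$, with your regularization $\psi_\epsilon=-\log(|\sigma_D|_{h_D}^2+\epsilon)$ supplying the standard justification of the singular Stokes step that the paper leaves implicit.
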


\begin{proof}
More generally, it suffices to show that 
\[
\lim_{s\to\infty}\int_{\{\psi=s\}}d^c\psi\wedge\alpha
  = \langle T_D, \alpha\rangle
\]
holds for any smooth $d$-closed $(n-1, n-1)$-form $\alpha$ on $X$. 
Indeed, applying this to 
$\alpha=(\textstyle\frac{\sqrt{-1}}{2\pi}\Theta_{h_D})^{\wedge k}\wedge\omega^{\wedge (n-k-1)}$
and using that $dd^c\psi=\textstyle\frac{\sqrt{-1}}{2\pi}\Theta_{h_D}$ on $X\setminus Y$, which follows from equation (\ref{eq:Poincare_lelong_ddcpsi}), yields the desired formula.

Set $W_s:=Y\cup\{\psi>s\}$.  
By Stokes' theorem and the assumption that $\alpha$ is $d$-closed, we have
\[
\int_{\{\psi=s\}}d^c\psi\wedge\alpha
 = -\int_{W_s}d(d^c\psi\wedge\alpha)
 = -\int_{W_s}dd^c\psi\wedge\alpha
\]
(Here recall that we orient each level set $\{\psi=s\}$ as the boundary of the sublevel domain $\{\psi<s\}$ so that $W_s$ lies outside of it). 
Using $(\ref{eq:Poincare_lelong_ddcpsi})$, we obtain
\[
\int_{\{\psi=s\}}d^c\psi\wedge\alpha
 = \langle T_D,\alpha\rangle
   - \int_{W_s}\tfrac{\sqrt{-1}}{2\pi}\Theta_{h_D}\wedge\alpha.
\]
Since $\textstyle\frac{\sqrt{-1}}{2\pi}\Theta_{h_D}\wedge\alpha$ is smooth on $X$,
its coefficients are bounded near $Y$. 
Thus there exists a constant $M>0$ such that
\[
\left|\int_{W_s}\tfrac{\sqrt{-1}}{2\pi}\Theta_{h_D}\wedge\alpha\right|
   \le M\!\int_{W_s}\!dV_X
\]
for any $s \gg 1$, where $dV_X$ is a volume form of $X$.
Since the second term tends to $0$ as $s\to\infty$, the assertion follows.
\end{proof}

By applying Lemma \ref{lem:key} and Proposition \ref{prop:key} in the same manner as in 
\cite{N} and \cite{FS}, we have the following: 
\begin{corollary}\label{cor:key}
Let $X$, $D$, $h_D$, and $\eta$ be as in \S \ref{subsection:standing_seting}. Assume that $\sqrt{-1}\Theta_{h_D}\geq 0$ and {\bf Property $(\ast)$} hold. 
For a Kähler form $\omega$ of $X$ and $k\in\{1,2,\dots, n-1\}$, the following holds: \\
$(i)$ $(D^{k+1}.\ \{\omega\}^{n-k-1})\geq 0$. \\
$(ii)$ If $(D^{k+1}.\ \{\omega\}^{n-k-1}) = 0$, then there exists a positive number $\ve_0$ such that the Levi form of $\{\psi = -\log t\} (=\{\eta = t\})$ has at most $k-1$ positive eigenvalues at any point of $\{\eta = t\}$ for any $t\in (0, \ve_0)$. \\
$(iii)$ If $(D^{k+1}.\ \{\omega\}^{n-k-1}) > 0$, then there exists a positive number $\ve_0$ such that the Levi form of $\{\psi = -\log t\} (=\{\eta = t\})$ has at least $k$ positive eigenvalues at some point of $\{\eta = t\}$ for any $t\in (0, \ve_0)$. 
\end{corollary}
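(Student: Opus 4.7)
\emph{Proof plan.} The plan is to translate the global behavior of $F_{k,\omega}$, encoded in Lemma \ref{lem:key} and Proposition \ref{prop:key}, into pointwise geometric information on the level sets $\{\psi=s\}$. By Lemma \ref{lem:psi_levelset_conn}, for all sufficiently large $s$ the level set $\{\psi=s\}$ is a smooth compact real hypersurface consisting of regular points of $\psi$; the hypothesis $\sqrt{-1}\Theta_{h_D}\ge 0$ combined with equation $(\ref{eq:Poincare_lelong_ddcpsi})$ gives $dd^c\psi\ge 0$ on $X\setminus Y$, so the Levi form of each such level set is positive semi-definite, with eigenvalues $\lambda_1,\ldots,\lambda_{n-1}\ge 0$. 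Since $\omega$ is Kähler, the form $\omega^{\wedge(n-k-1)}$ is $d$-closed, so both Lemma \ref{lem:key} and Proposition \ref{prop:key} apply.

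The key is to establish the pointwise identity
\[
d\psi\wedge d^c\psi\wedge(dd^c\psi)^{\wedge k}\wedge\omega^{\wedge(n-k-1)}\bigl|_p
 = c_{n,k}\,|\partial\psi|^2_\omega\,e_k(\lambda_1,\ldots,\lambda_{n-1})\,\omega^{\wedge n}(p)
\]
at each regular point $p\in\{\psi=s\}$, with a universal positive constant $c_{n,k}$ and $e_k$ the $k$-th elementary symmetric polynomial. I would verify this by choosing holomorphic coordinates $(z_1,\ldots,z_n)$ at $p$ normalized so that $\omega(p)=\tfrac{\sqrt{-1}}{2}\sum_j dz_j\wedge d\bar z_j$, $\partial\psi(p)$ is a positive multiple of $dz_n$, and the matrix $(\psi_{z_j\bar z_k}(p))_{j,k<n}$ is diagonal with entries $\lambda_1,\ldots,\lambda_{n-1}$. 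Using the identity $d\psi\wedge d^c\psi=\tfrac{\sqrt{-1}}{2\pi}\partial\psi\wedge\bar\partial\psi$, the leading factor at $p$ becomes a positive multiple of $dz_n\wedge d\bar z_n$; wedging with this annihilates every component of $dd^c\psi$ and of $\omega$ containing $dz_n$ or $d\bar z_n$, and only the horizontal parts (the diagonal Levi form and $\omega_H:=\tfrac{\sqrt{-1}}{2}\sum_{j<n}dz_j\wedge d\bar z_j$) survive. A direct expansion of the resulting product then produces the elementary symmetric polynomial $e_k(\lambda_1,\ldots,\lambda_{n-1})$ as the only surviving combinatorial factor. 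Combined with the orientation convention on $\{\psi=s\}$, this identity shows that the restriction of the $F_{k,\omega}$-integrand to $\{\psi=s\}$ is pointwise a nonnegative multiple of $e_k(\lambda_1,\ldots,\lambda_{n-1})$ times the induced volume form.

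Granted this formula, the three assertions follow at once. For $(i)$, Lemma \ref{lem:key} gives $F_{k,\omega}(s)\ge 0$ for $s\gg 1$, while Proposition \ref{prop:key} identifies its limit with $(D^{k+1}.\ \{\omega\}^{n-k-1})$, which is therefore nonnegative. For $(ii)$, vanishing of this limit combined with the monotonicity of $F_{k,\omega}$ forces $F_{k,\omega}(s)\equiv 0$ for all $s\gg 1$; pointwise nonnegativity of the integrand then gives $e_k(\lambda_1,\ldots,\lambda_{n-1})\equiv 0$ on each such level set, and the constraint $\lambda_i\ge 0$ forces at most $k-1$ of them to be strictly positive. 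For $(iii)$, a positive limit gives $F_{k,\omega}(s)>0$ for $s\gg 1$, so the pointwise nonnegative integrand must be strictly positive at some point of each such level set, where $e_k(\lambda_1,\ldots,\lambda_{n-1})>0$; together with $\lambda_i\ge 0$ this forces at least $k$ of the $\lambda_i$ to be positive there. The only delicate step is the pointwise identity; once one notices that $d\psi\wedge d^c\psi$ behaves like a rank-one projector onto the normal $(1,1)$-direction, the remaining verification is a mechanical expansion.
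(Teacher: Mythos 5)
Your proposal is correct and takes essentially the same approach as the paper: assertion $(i)$ comes directly from Lemma \ref{lem:key} and Proposition \ref{prop:key}, and $(ii)$, $(iii)$ are reduced, via monotonicity forcing $F_{k,\omega}\equiv 0$ (resp.\ $F_{k,\omega}>0$) for $s\gg 1$, to a pointwise normal-coordinate computation identifying the restricted integrand with a nonnegative multiple of the $k$-th elementary symmetric polynomial of the Levi eigenvalues, which are nonnegative by plurisubharmonicity. The only difference is presentational: you wedge with $d\psi$ to obtain a $2n$-form identity, while the paper expands the $(2n-1)$-form in adapted unitary coordinates and discards the term proportional to $dz_1\wedge d\overline{z_1}$ upon restriction to the level set; the two formulations are equivalent.
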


\begin{proof}
By using $\psi$ as in \S \ref{subsection:standing_seting} and $\omega$ together with $k$, 
in what follows we consider the function $F_{k, \omega}$. 
Assertion $(i)$ simply follows from Lemma \ref{lem:key} and Proposition \ref{prop:key}. 
When $(D^{k+1}.\ \{\omega\}^{n-k-1}) = 0$, from Lemma \ref{lem:key} and Proposition \ref{prop:key} it follows that $F_{k, \omega}(s)\equiv 0$ for $s\gg 1$. Therefore, for $s\gg 1$, 
\begin{equation}\label{eq_dcpddcpomegazerointheprof}
\left.d^c\psi\wedge (dd^c\psi)^{\wedge k}\wedge \omega^{\wedge (n-k-1)}\right|_{\{\psi = s\}}=0
\end{equation}
holds at each point $p$ of $\{\psi = s\}$. 
Take local coordinates $(z_1, z_2, \dots, z_n)$ on a neighborhood of $p$ such that 
\[
\omega = \sqrt{-1}\sum_{j=1}^n dz_j\wedge d\overline{z_j}
\]
holds at $p$. By changing the coordinates by using a unitary matrix of order $n$, we may assume that $dz_1$ is parallel to $\del \psi$ at $p$. Furthermore, by applying a unitary matrix of order $n-1$ to $(z_2, z_3, \dots, z_n)$, we may also assume that 
\[
dd^c\psi = \sqrt{-1}\left(a(p)dz_1\wedge d\overline{z_1} + \sum_{j=2}^n b_j(p)dz_1\wedge d\overline{z_j} + \sum_{j=2}^n \overline{b_j(p)}dz_j\wedge d\overline{z_1} + \sum_{j=2}^n \gamma_j(p)dz_j\wedge d\overline{z_j}\right)
\]
holds at $p$, where $a(p)$ and $b_j(p)$'s are constants and $\gamma_j(p)$'s are eigenvalues of the Levi form with respect to $\omega$ at $p$. 
As $d^c\psi$ is parallel to $dz_1 - d\overline{z_1}$ at $p$, we have that 
\[
d^c\psi\wedge (dd^c\psi)^{\wedge k}\wedge \omega^{\wedge (n-k-1)}=c\left(\sum_{(j_1, j_2, \dots, j_k)\in I_k}\prod_{\lambda=1}^k\gamma_{j_\lambda}(p)\right)\,d{\rm Im}\,z_1\wedge \sum_{j=2}^n dz_j\wedge d\overline{z_j} + dz_1\wedge d\overline{z_1}\wedge \alpha, 
\]
where $c$ is a positive constant which depends only on $n$, $k$, and the proportionality factor between $dz_1$ and $\del \psi$, $I_k:=\{(j_1, j_2, \dots, j_k) \mid 2\leq j_1< j_2 < \dots < j_k\leq n\}$, and $\alpha$ is a $(2n-3)$-form. As $dz_1\wedge d\overline{z_1}$ is parallel to $d\psi\wedge d^c\psi$ and $d\psi=0$ holds at $p$ after restricting them to $\{\psi=s\}$, from (\ref{eq_dcpddcpomegazerointheprof}) it follows that 
\[
\prod_{\lambda=1}^k\gamma_{j_\lambda}(p) = 0
\]
holds at any point $p$ of $\{\psi = s\}$ for any $(j_1, j_2, \dots, j_k) \in I_k$ and $s\gg 1$ (here we used the fact that each $\gamma_j(p)$ is non-negative, since $\psi$ is plurisubharmonic on a neighborhood of $p$). 
Therefore assertion $(ii)$ follows. 

When $(D^{k+1}.\ \{\omega\}^{n-k-1}) > 0$, again from Lemma \ref{lem:key} and Proposition \ref{prop:key} it follows that $F_{k, \omega}(s) > 0$ for $s\gg 1$. Therefore, for $s\gg 1$, 
\[
\left.d^c\psi\wedge (dd^c\psi)^{\wedge k}\wedge \omega^{\wedge (n-k-1)}\right|_{\{\psi = s\}} > 0
\]
holds on some point of $\{\psi = s\}$. 
Therefore, by the same argument as above, we have that 
\[
\prod_{\lambda=1}^k\gamma_{j_\lambda}(p) > 0
\]
holds at a point $p$ of $\{\psi = s\}$ for some $(j_1, j_2, \dots, j_k) \in I_k$, for any $s\gg 1$, from which assertion $(iii)$ follows. 
\end{proof}


\section{Proof of main theorems}\label{section:prf_mainresults}

In this section, we give proofs of the main theorems stated in \S 1. 
For every theorem, the assertion is trivial when the dimension $n$ of the complex manifold $X$ is one. 
Therefore, we shall restrict ourselves to the case $n>1$ in what follows.

\subsection{A general result and proof of Theorem \ref{thm:main_vgeneral}}

Here we first show the following general result, in which $X$ is not assumed to be compact. 

\begin{theorem}\label{thm:general}
Let $X$, $Y$, $D$ be as in \S\ref{subsection:standing_seting}. 
Assuming that $[D]$ is semi-positive, take a $C^\infty$ Hermitian metric $h_D$ of $[D]$ with $\sqrt{-1}\Theta_{h_D} \geq 0$ and the function $\psi$ as in \S \ref{subsection:standing_seting}. 
Assume also that $X$ is Kähler, {\bf Property $(\ast)$} holds, and that $D$ is numerically trivial along $Y$ with respect to Kähler classes of $X$. 
Then the level set $\{\psi = s\}$ is a connected Levi-flat hypersurface for $s \gg 1$, and one of the following holds: \\
$(a)$ For any positive number $M>0$ there exists a real number $s$ with $s > M$ such that the level set $\{\psi = s\}$ admits a non-constant real-valued $C^\infty$ function that is leafwise constant with respect to the Levi foliation.  \\
$(b)$ There exists a neighborhood $V$ of $Y$ such that $[D]|_V$ is unitary flat. 
\end{theorem}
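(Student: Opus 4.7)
My plan divides naturally into establishing the Levi-flatness of the level sets and then proving the dichotomy. First, I would invoke Lemma \ref{lem:psi_levelset_conn} (applicable since Property $(\ast)$ is assumed) to obtain that $\{\psi=s\}$ is a connected compact smooth real hypersurface of $X\setminus Y$ for all $s\gg 1$. For Levi-flatness, I would combine the numerical triviality hypothesis with the identity \eqref{eq:keisan_D2alphan-2}, whose right-hand side makes sense even when $X$ is non-compact because $Y$ is, to deduce $(D^2.\{\omega\}^{n-2})=0$ for any Kähler form $\omega$ on $X$. Applying Corollary \ref{cor:key} (ii) with $k=1$ (its hypothesis $d\omega^{n-2}=0$ is immediate from Kählerness) then yields that the Levi form of $\{\psi=s\}$ has no positive eigenvalues for $s\gg 1$. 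Since $\psi$ is plurisubharmonic on $X\setminus Y$ by $\sqrt{-1}\Theta_{h_D}\geq 0$ and \eqref{eq:Poincare_lelong_ddcpsi}, the Levi form is also non-negative; hence it vanishes identically, proving Levi-flatness.

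Next, assuming (a) does not hold, I plan to deduce (b). By hypothesis, there exists $M>0$ such that every real-valued $C^\infty$ leafwise-constant function on $\{\psi=s\}$ is globally constant for each $s>M$, i.e., the Levi foliation of each such level set is $C^\infty$-minimal. Since $dd^c\psi$ on $X\setminus Y$ is semi-positive and its kernel contains the $(n-1)$-dimensional complex tangent to each Levi-flat level set, its pointwise rank is at most $1$; accordingly, the Levi foliations on $\{\psi=s\}$ for varying $s>M$ assemble, on the open locus where this rank equals $1$, into a nonsingular holomorphic foliation $\mathcal{G}$ of complex dimension $n-1$ whose leaves lie in level sets of $\psi$. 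By Lemma \ref{lem:flatlb_funds} (iii) and Remark \ref{rmk:psi-h-corresp}, proving (b) reduces to exhibiting local holomorphic defining functions $\sigma_j$ of $D$ on a cover of a neighborhood of $Y$ with $|\sigma_j/\sigma_k|\equiv 1$ on overlaps, or equivalently to showing that local weights of $h_D$ can be chosen pluriharmonic near $Y$.

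The decisive step, and the main obstacle, is the construction of these $\sigma_j$ from the minimality of $\mathcal{G}$. My strategy is to study, near a smooth point of $Y$, the holonomy of $\mathcal{G}$ acting on a small transversal disc, in the spirit of Lemma \ref{lem:monodromy_corresp} but without an a priori unitary flat structure on $[D]$. On such a transversal, $\psi$ is an obvious continuous $\mathcal{G}$-invariant, so any additional smooth invariant would descend to a non-constant leafwise-constant $C^\infty$ function on some $\{\psi=s\}$; minimality rules this out, leaving only functions of $\psi$. A normal-form argument in the germ group of biholomorphisms of $(\mathbb{C},0)$ should then force the holonomy to act as a rotation, and hence by unitary transformations, yielding unitary gluings of the $\sigma_j$ near the smooth locus of $Y$. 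Extension of the resulting unitary flat structure across the singular locus of $Y$ would follow from Lemma \ref{lem:from_linalg} (ii) combined with a Hartogs-type removable-singularity argument. The hardest part will be the passage from $C^\infty$ minimality of the Levi foliation to unitarity of the transverse holonomy, especially when $Y$ is reducible or singular.
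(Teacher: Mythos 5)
Your first half (connectedness via Lemma \ref{lem:psi_levelset_conn}, reduction of numerical triviality to $(D^2.\{\omega\}^{n-2})=0$, and Levi-flatness via Corollary \ref{cor:key} $(ii)$ with $k=1$ together with plurisubharmonicity of $\psi$) is exactly the paper's argument and is fine. The implication ``not $(a)$ $\Rightarrow$ $(b)$'', however, is where your proposal has a genuine gap. You assert that the Levi foliations of the level sets assemble into a \emph{nonsingular holomorphic} foliation $\mathcal{G}$ near $Y$ and then argue via its transverse holonomy. A priori the union of the Levi foliations is only a smooth foliation with complex leaves (the distribution is $\ker\del\psi$, and $\del\psi$ is merely a smooth $(1,0)$-form at this stage); a real-codimension-two foliation by complex hypersurfaces need not be holomorphic, and in the present situation holomorphy of this foliation is essentially equivalent to being able to make the local weights of $h_D$ pluriharmonic, i.e.\ to conclusion $(b)$ itself. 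Consequently the holonomy germs you want to put in normal form are not germs of biholomorphisms of $(\mathbb{C},0)$, and the step ``minimality of the foliation forces the holonomy to act by rotations'' is not substantiated even in the holomorphic category: knowing that the only transverse invariant is $\psi$ does not by itself linearize or unitarize the holonomy. You flag this yourself as the hardest part, but it is precisely the content of the theorem, so the proposal as it stands does not close the loop; likewise, the extension of a putative flat structure across the singular locus of $Y$ via Lemma \ref{lem:from_linalg} $(ii)$ plus a Hartogs-type argument is not worked out (that lemma compares divisors numerically trivial along $Y$, it does not extend flat structures).

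The paper's proof avoids holonomy entirely and is worth comparing with. From Levi-flatness and plurisubharmonicity one gets a nonnegative function $f$ on a deleted neighborhood of $Y$, leafwise constant, with $\sqrt{-1}\ddbar\psi = f\cdot\sqrt{-1}\,\del\psi\wedge\delbar\psi$; the failure of $(a)$ forces $f=G\circ\psi$ for a single-variable function $G$. The local expression $f=\eta^{1/m}\cdot A$ with $A$ smooth (equation (\ref{eq:local_express_f})) controls $G$ near $Y$, and one then solves an explicit ODE in the variable $t=\eta$ to produce $\widehat{\chi}$ such that $\widehat{\psi}:=-\widehat{\chi}\circ\eta$ satisfies $\ddbar\widehat{\psi}\equiv 0$ off $Y$ while $\widehat{\psi}-\psi$ stays bounded near $Y$; hence $\widehat{\psi}+\log|\sigma|^2$ is bounded pluriharmonic off $Y$, extends pluriharmonically across $Y$, and defines a flat metric on $[D]|_V$, so $(b)$ follows from Lemma \ref{lem:flatlb_funds} $(iii)$ and Remark \ref{rmk:psi-h-corresp}. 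This scalar ODE mechanism is what replaces your unitarization-of-holonomy step, and it works uniformly for reducible and singular $Y$. If you want to salvage your route, you would first have to prove holomorphy of $\mathcal{G}$ and holomorphy plus unitarizability of its holonomy, which is a substantially harder (and here unnecessary) detour.
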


\begin{proof}
By Lemma \ref{lem:psi_levelset_conn}, the level set $\{\psi = s\}$ is a connected compact real hypersurface of $X\setminus Y$ for all $s \gg 1$.
As $D$ is numerically trivial along $Y$ with respect to Kähler classes of $X$, 
\[
(D^2, \{\omega\}^{n-2}) = \sum_{\nu=1}^N m_\nu\cdot \int_{Y_\nu} c_1(D) \wedge \{\omega\}^{\wedge (n-2)} = 0
\]
holds for a Kähler form $\omega$ of $X$, where $m_\nu$ is the coefficient of $Y_\nu$ in the irreducible decomposition of $D$. 
Therefore, by applying Corollary \ref{cor:key} $(ii)$ with $k=1$, it follows that $\{\psi = s\}$ is a connected Levi-flat hypersurface for $s \gg 1$. 

In what follows we show assertion $(b)$ by assuming that $(a)$ does not hold. 
Set $V := Y \cup \{\psi > s_0\} = \{\eta < e^{-s_0}\}$ for sufficiently large $s_0>0$. 
As any $s \gg 1$ is a regular value of $\psi$ by Lemma \ref{lem:psi_levelset_conn} and the level sets are Levi-flat, it follows from a standard pointwise linear-algebraic argument that there exists a non-negative real-valued function $f$ on $V\setminus Y$ that is leafwise constant with respect to the Levi foliations for all level sets such that 
\[
\sqrt{-1}\ddbar \psi = f\cdot \sqrt{-1} \del \psi \wedge \delbar \psi
\]
holds on $V\setminus Y$ (for the details, see the proof of \cite[Lemma 4.2]{Ko2022}, or Lemma 3.1 and the arguments around Lemma 3.2 in \cite{Ko2024} for example). 
Note that locally this function $f$ can be expressed as follows on the complement $W\setminus Y$ for a small neighborhood $W$ of a regular point of $Y$ in $V$: 
\begin{equation}\label{eq:local_express_f}
f = \frac{\sqrt{-1}\ddbar \psi}{\sqrt{-1} \del \psi \wedge \delbar \psi} 
= \frac{\sqrt{-1}\ddbar \vp}{\sqrt{-1} (-mdw/w + \del \vp) \wedge (-md\overline{w}/\overline{w} + \delbar \vp)} 
= \eta^{1/m}\cdot A, 
\end{equation}
where $w$ is a local defining function of $Y$ on $W$, $m$ is the multiplicity of $D|_W$ along $Y$, $\vp$ is the local weight function of $h_D$ such that $\psi = -m\log |w|^2 + \vp$ holds on $W\setminus Y$, and $A$ is a smooth function on $W$. 

As $f$ is leafwise constant, from our assumption that condition $(a)$ does not hold, it follows that there exists a function $G\colon (s_0, \infty) \to \mathbb{R}$ such that $f=G\circ \psi$ holds on $V\setminus Y$, by replacing $s_0$ with a larger number if necessary. 
Note that, as it is clear from the local expression (\ref{eq:local_express_f}) of $f$, it follows that 
\[
G(s) = \exp(-s/m)\cdot B(s) = t^{1/m}\cdot \widehat{B}(t^{1/2m}), 
\]
where the parameters $s$ and $t$ are related by (\ref{eq:relation_sandt}), 
holds for some functions $B$ and $\widehat{B}$. 
Note also that $\widehat{B}(\tau)$ is defined and continuous also at $\tau=0$, since the function $A$ is smooth on $W$. 

In this case, by arguments similar to those in the proof of \cite[Lemma 4.2]{Ko2022}, one can solve a suitable ordinary differential equation to construct a function $\chi$ such that $\chi\circ\psi$ is pluriharmonic on $V\setminus Y$ (see also the arguments in \cite[\S 5]{MongodiSlodkowskiTomassini18} and \cite[\S 5.1]{MongodiTomassini20}). 
However, in the present situation we need more precise information on the behavior of the modified function $\chi\circ\psi$ near $Y$.
For this reason, we shall essentially repeat the same argument, but now focusing on the function $\eta$ instead of $\psi$, and look for a function $\widehat{\chi}$ such that $\widehat{\chi}\circ\eta$ is pluriharmonic on $V\setminus Y$, while paying particular attention to the behavior of $\widehat{\chi}(t)$ around $t=0$. 

Let $g$ be the function on $V\setminus Y$ such that 
\[
\sqrt{-1}\ddbar \eta = g\cdot \sqrt{-1}\del \eta \wedge \delbar \eta. 
\]
As $\del \eta =  -\eta\del \psi$, $\delbar \eta =  -\eta\delbar \psi$, and 
$\ddbar \eta = \eta(-\ddbar \psi + \del\psi\wedge \delbar \psi)$, 
we have 
\[
g = \frac{\sqrt{-1}\ddbar \eta}{\sqrt{-1}\del \eta \wedge \delbar \eta}
=\frac{\eta\cdot (-f+1)}{(-\eta)^2}=\frac{1-f}{\eta}. 
\]
Therefore, by letting
\[
\widehat{G}(t) := \frac{1-G(-\log t)}{t} = \frac{1}{t} - t^{-1+(1/m)}\cdot \widehat{B}(t^{1/2m}), 
\]
we have $g = \widehat{G} \circ \eta$ on $V\setminus Y$. 

As $\widehat{B}(\tau)$ is defined and continuous also at $\tau=0$, the function 
\[
\rho(t) 
:=\frac{1}{t}\cdot \exp\int_0^t \tau^{-1+(1/m)}\cdot \widehat{B}(\tau^{1/2m})\,d\tau
\]
is well-defined; More precisely, we have 
\[
\rho(t) = \frac{1}{t}\cdot \exp\left(t^{1/m}\cdot \beta(t)\right)
\]
holds for a continuous function $\beta$ with $\beta(0) = m\widehat{B}(0)$. 
As $(\log \rho)'=-\widehat{G}$ by construction, we have 
$\rho' = -\widehat{G}\cdot \rho$. 
Therefore by letting 
\[
\widehat{\chi}(t) := \log t + \int_0^t\left(\rho(\tau) - \frac{1}{\tau}\right)\,d\tau, 
\]
where the well-definedness of the second term of the right-hand side can be easily checked by a fundamental argument, we have that the function 
\[
\widehat{\psi} := -\widehat{\chi}\circ \eta
\]
satisfies that $\widehat{\psi}-\psi$ is a bounded function on a neighborhood of $Y$ and that 
\begin{align*}
\ddbar \widehat{\psi} &= -(\widehat{\chi}'\circ \eta)\cdot \ddbar \eta -(\widehat{\chi}''\circ \eta)\cdot \del \eta\wedge \delbar \eta 
=-(\rho\circ \eta)\cdot \ddbar \eta -(\rho'\circ \eta)\cdot \del \eta\wedge \delbar \eta \\
&=(\rho\circ \eta)\cdot \left(-\ddbar \eta +(\widehat{G}\circ \eta)\cdot \del \eta\wedge \delbar \eta\right)
=(\rho\circ \eta)\cdot \left(-\ddbar \eta +g\cdot \del \eta\wedge \delbar \eta\right) \equiv 0
\end{align*}
holds on $V\setminus Y$. 
On a neighborhood $U$ of each point of $Y$, from this it follows that $\widehat{\psi}+\log|\sigma|^2$ is bounded on $U$ and pluriharmonic on $U\setminus Y$, where $\sigma$ is a local defining function of $D$ on $U$. 
As we have $\widehat{\psi}+\log|\sigma|^2$ is pluriharmonic also on $U$ in this case (see \cite[p. 45 (5.24) Theorem]{Demaillyagbook} for example), the Hermitian metric on $[D]|_V$ that corresponds to $\widehat{\psi}$ in the correspondence in Remark \ref{rmk:psi-h-corresp} is a flat metric. 
\end{proof}

By applying Theorem \ref{thm:general}, one can show Theorem \ref{thm:main_vgeneral} as follows. 

\begin{proof}[Proof of Theorem \ref{thm:main_vgeneral}]
Let $X$, $Y$, and $D$ be as in Theorem \ref{thm:main_vgeneral}. 
Take a $C^\infty$ Hermitian metric $h_D$ of $[D]$ with $\sqrt{-1}\Theta_{h_D} \geq 0$ and the function $\psi$ as in \S \ref{subsection:standing_seting}. 
Without loss of generality, we may assume that $\psi$ satisfies {\bf Property $(\ast)$} (see Remark \ref{rmk:wlogwmaV0conn}). 
Then it is enough to show assertion $(i)$ of Theorem \ref{thm:main_vgeneral} when condition $(a)$ of 
Theorem \ref{thm:general} holds (Note that it follows from equation (\ref{eq:keisan_D2alphan-2}) that the assumption of the theorem implies that $D$ is numerically trivial along $Y$ with respect to Kähler classes of $X$; also refer to the discussion right after Definition \ref{def:num_triv_along_Y_wrt_K}). 
For this purpose, for a given neighborhood $V$ of $Y$, take a sufficiently large $s \gg 1$ such that the level set $\{\psi = s\}$ is contained in $V \setminus Y$. 
Note that we may assume that this level set is a connected compact real hypersurface by virtue of Lemma \ref{lem:psi_levelset_conn}. 
By condition $(a)$, we may also assume that there exists a non-constant $C^\infty$ function $h\colon \{\psi = s\} \to \mathbb{R}$ that is leafwise constant with respect to the Levi foliation. 
As $h$ is a continuous function from a compact connected space, 
the image ${\rm Image}\,h := h(\{\psi = s\})$ is a closed interval in $\mathbb{R}$ and 
the set of the critical values ${\rm Crit}\,h$ is a closed subset of ${\rm Image}\,h$. 
Moreover, from Sard's theorem, it follows that the complement ${\rm Image}\,h\setminus {\rm Crit}\,h$ contains a non-empty open interval $I$. 
As the map $h|_{h^{-1}(I)}\colon h^{-1}(I) \to I$ is a proper submersion,
Ehresmann's fibration theorem implies that it defines a smooth locally trivial fibration,
and the desired assertion follows by its fibers (Here we note that $\Gamma_\tau := h^{-1}(\tau)$ is the union of some leaves of Levi foliations for each $\tau \in I$, since $h$ is leafwise constant, and thus this fibration defines a smooth one-parameter family of non-singular compact complex hypersurfaces). 
\end{proof}

At the end of this subsection, let us show the following for the geometric structure of $X$ in the case of $(a)$ in Theorem \ref{thm:general} when $X$ is compact Kähler, as a preparation to the proofs of the other main theorems. 
\begin{lemma}\label{lem:JDGlem36}
Let $X$ be a connected compact Kähler manifold, and $D$ be an effective divisor of $X$ with connected support $Y$ such that $D$ is numerically trivial along $Y$ with respect to Kähler classes of $X$. 
Assuming that $[D]$ is semi-positive, take a $C^\infty$ Hermitian metric $h_D$ on $[D]$ with $\sqrt{-1}\Theta_{h_D}\geq 0$. For the function $\psi$ as in \S \ref{subsection:standing_seting}, assume that {\bf Property $(\ast)$} holds and condition $(a)$ of Theorem \ref{thm:general} holds. Then there exists a fibration $\Phi\colon X \to R$ onto a compact Riemann surface $R$ such that $Y$ is a fiber of $\Phi$. 
\end{lemma}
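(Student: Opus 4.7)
My strategy is to use the family of disjoint compact complex hypersurfaces supplied by condition $(a)$ to build a pencil on $X$ and apply Stein factorization to the resulting rational map to $\mathbb{P}^1$; proportionality of the divisor classes of these hypersurfaces with $[D]$ will force $Y$ to be a fiber of the factorization.

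I would first reproduce the argument in the second half of the proof of Theorem \ref{thm:main_vgeneral} to extract, from condition $(a)$, a non-empty open interval $I\subset\mathbb{R}$ and a smooth one-parameter family $\{\Gamma_\tau\}_{\tau\in I}$ of disjoint smooth compact complex hypersurfaces in $V\setminus Y$ (for a small neighborhood $V$ of $Y$), each being a union of leaves of the Levi foliation of some level set $\{\psi=s\}$.

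Next I would establish the cohomological proportionality $a[\Gamma_\tau]=b[D]$ in $H_{2n-2}(X,\mathbb{Z})$ for some positive integers $a,b$. After shrinking $V$ so that it deformation retracts onto $Y$ (passing to a log resolution if $Y$ is singular), one writes $[\Gamma_\tau]=\sum_\nu c_\nu[Y_\nu]$ with $\vec{c}=(c_\nu)\in\mathbb{Z}^N$ independent of $\tau$ by discreteness. The disjointness $\Gamma_\tau\cap Y=\emptyset$ forces $(A\vec{c})_\nu=[\Gamma_\tau]\cdot[Y_\nu]\cdot\{\omega\}^{n-2}=0$ for the matrix $A$ of \S\ref{subsection:conseq_linalg}; applying Lemma \ref{lem:lin_alg} to $A$ (whose largest eigenvalue is $0$ by Lemma \ref{lem:from_linalg}\,$(i)$), one sees that $\ker A$ is one-dimensional---any two linearly independent kernel vectors would produce a linear combination whose first vanishing coordinate places it outside $\pm\mathbb{R}_+^N$---and is generated by the coefficient vector of $D$, whence $\vec{c}$ is proportional to it with a rational factor whose sign is positive by effectivity of both divisors.

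The third step---lifting this cohomological proportionality to a linear equivalence and constructing the global fibration---is the main difficulty. I would argue that the existence of the non-constant leafwise-constant function $h$ forces the holonomy of the holomorphic foliation $\mathcal{F}$ near $Y$ (whose leaves are $Y$ and, locally, the Levi leaves) to have finite image, so that a variant of Lemma \ref{lem:scv_monodromy_onV}\,$(i)$ produces a proper holomorphic map $p\colon V\to\Delta$ with $p^*\{0\}=aD|_V$, even without assuming an a priori flat structure on $[D]|_V$. The two sections $\sigma_D^a$ and (the section induced by) $p$ of $[aD]|_V$ then generate a pencil; extending the resulting meromorphic function from $V$ to $X$ by a Hartogs-type argument (leveraging compactness of $X$ and the abundance of compact divisors at hand), and Stein-factorizing the resulting rational map to $\mathbb{P}^1$, yields the desired fibration $\Phi\colon X\to R$ with $Y$ as a fiber. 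The hard parts are precisely the two embedded steps here: deducing finite holonomy of $\mathcal{F}$ from condition $(a)$ alone (which requires a careful analysis of closure properties of the Levi leaves), and globally extending the local meromorphic function on $V$ to all of $X$.
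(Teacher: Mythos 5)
Your steps 1--2 are sound: extracting the disjoint compact hypersurfaces $\Gamma_\tau\subset V\setminus Y$ from condition $(a)$, writing $[\Gamma_\tau]=\sum_\nu c_\nu[Y_\nu]$ after retracting $V$ onto $Y$, and using the matrix $A$ together with Lemma \ref{lem:lin_alg} to conclude that $[\Gamma_\tau]$ is a positive rational multiple of $[D]$ (the positivity of the factor is best seen by pairing with $\omega^{n-1}$, not by ``effectivity''). But the heart of the lemma --- producing the global fibration $\Phi\colon X\to R$ --- is exactly the part you leave unproven, and the route you sketch for it does not work as stated. The holomorphic foliation $\mathcal F$ and the ${\rm U}(1)$-holonomy framework of \S\ref{subsection:semilocal_nbhd_Y_Dflat} (hence any ``variant of Lemma \ref{lem:scv_monodromy_onV}\,$(i)$'') are only available when $[D]|_V$ is unitary flat, i.e., in case $(b)$ of Theorem \ref{thm:general}; in case $(a)$ no such foliation across $Y$, let alone finiteness of its holonomy, is given, and condition $(a)$ by itself does not obviously yield a proper map $p\colon V\to\Delta$ with $p^*\{0\}=aD$. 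Likewise, the ``Hartogs-type'' extension of the resulting meromorphic function from $V$ to $X$ is not a routine extension statement: since $D$ is numerically trivial along $Y$ there is no pseudoconcavity to exploit, and indeed Theorem \ref{cor:main} shows that holomorphic Hartogs extension across such a neighborhood fails precisely in the fibration case, so globalizing the local pencil is the genuinely hard global input, not a technical afterthought.

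The paper handles this global step by quotation rather than construction: it invokes \cite[Lemma 3.6]{Ko2024} (based on Neeman's argument \cite{Nee}; see also \cite[Theorem 2.1]{Tot}) to obtain a compact leaf $Y'$ of the Levi foliation of some level set together with a fibration $\Phi\colon X\to R$ having $Y'$ as a fiber, and then finishes with Remmert's proper mapping theorem: $\Phi(Y)$ is irreducible, and since $Y\cap Y'=\emptyset$ it cannot be all of $R$, so $Y$ lies in a fiber. Your steps 1--2 would in fact feed directly into the alternative route mentioned in Remark 4.4 (apply Neeman's or Totaro's theorem to the disjoint, mutually homologous hypersurfaces $\Gamma_\tau$), so the proposal could be repaired by citing such a result instead of the finite-holonomy/extension scheme; as written, however, with both key sub-steps explicitly left open, it is not a proof.
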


\begin{proof}
By \cite[Lemma 3.6]{Ko2024}, there exists a leaf $Y'$ of the Levi foliation of the level set $\{\psi = s\}$ for some $s \gg 1$ and a fibration $\Phi\colon X \to R$ onto a compact Riemann surface $R$ such that $Y'$ is a fiber of $\Phi$. 
As $Y$ is connected, it follows from Remmert's proper mapping theorem (see \cite[Chapter II (8.8)]{Demaillyagbook}) that the image $\Phi(Y)$ of $Y$ is either $R$ or a point of $R$. 
As $Y\cap Y'=\emptyset$, we have $\Phi(Y)\not=R$, from which the assertion follows. 
\end{proof}

\begin{remark}
Here we note that the fibration $\Phi$ in the proof of Lemma \ref{lem:JDGlem36} can also be obtained by directly applying Neeman's argument \cite[p. 109--110]{Nee} (see also \cite[Theorem 2.1]{Tot}), which is used in the proof of \cite[Lemma 3.6]{Ko2024}, to $\Gamma_\tau$'s as in the proof of Theorem \ref{thm:main_vgeneral}. 
\end{remark}

\subsection{Proof of Theorem \ref{thm:main_gen}}
Let $X$, $Y$, and $D$ be as in Theorem \ref{thm:main_gen}. 
Since all the conditions $(i)$, $(ii)$, and $(iii)$ concern a neighborhood of $Y$ (recall Lemma \ref{lem:sp_local} and Remark \ref{rmk:wlogwmaYconn} for condition $(i)$), to prove the theorem we may assume that both $X$ and $Y$ are connected. 
Note that $D$ is numerically trivial along $Y$ with respect to Kähler classes of $X$, by virtue of Lemma \ref{lem:num1_flatrestr}. 

First let us show the implication $(iii)\implies (ii)$. 
Take a system $\{(V_j, s_j)\}$ as in $(iii)$. 
Denote by $D'$ the divisor on $X$ that is locally defined by $s_j$ on each $V_j$. 
Then we have ${\rm Supp}\,D'=Y$ and $[D']|_V$ is unitary flat for $V:=\textstyle\bigcup_jV_j$ by construction. 
Take a flat metric $h_{D'}$ on $[D']|_V$. As $\textstyle\frac{\sqrt{-1}}{2\pi}\Theta_{h_{D'}}\equiv 0$ represents the class $c_1^{\mathbb{R}}([D']|_V)$, we have $c_1^{\mathbb{R}}([D']|_Y)=0$. 
In particular, $D'$ is numerically trivial along $Y$ with respect to Kähler classes of $X$. 
Therefore, from Lemma \ref{lem:from_linalg} $(ii)$, it follows that $D=qD'$ holds for some positive rational number $q$. As $h_{D'}^q$ defines a flat metric on $[D]|_V$, $(ii)$ follows from Lemma \ref{lem:flatlb_funds} $(iii)$. 

Next let us show the implication $(ii)\implies (i)$. 
If $[D]|_V$ is unitary flat for a neighborhood $V$ of $Y$, it is clear by definition that $[D]|_V$ is semi-positive. Therefore $[D]$ is also semi-positive by Lemma \ref{lem:sp_local}. 

Finally we show the implication $(i)\implies (iii)$. 
Take a $C^\infty$ Hermitian metric $h_D$ of $[D]$ with $\sqrt{-1}\Theta_{h_D} \geq 0$ and the function $\psi$ as in \S \ref{subsection:standing_seting}. 
Without loss of generality, we may assume that $\psi$ satisfies {\bf Property $(\ast)$} (see Remark \ref{rmk:wlogwmaV0conn}). 
Then either $(a)$ or $(b)$ of Theorem \ref{thm:general} holds. 
In the case where $(b)$ holds, one can easily show that $(iii)$ holds. 
When $(a)$ holds, by Lemma \ref{lem:JDGlem36} there exists a fibration $\Phi\colon X \to R$ onto a compact Riemann surface $R$ such that $Y = \Phi^{-1}(p)$ holds for some $p\in R$. 
Assertion $(iii)$ follows in this case by considering (the local expressions of) the function $s := w\circ \Phi$, where $w$ is a local coordinate function on a neighborhood of $p$ with $w(p)=0$. 
\qed

\subsection{Proof of Theorem \ref{thm:main_tor}}

For proving Theorem \ref{thm:main_tor}, let us first show the following proposition, which concerns the case where the support $Y$ of $D$ is connected. 
\begin{proposition}\label{prop:_maeno_main_tor}
Let $X$ be a connected compact Kähler manifold and $D$ be an effective divisor on $X$ with connected support $Y\subset X$ such that $[mD]|_Y$ is holomorphically trivial for some positive integer $m$. 
Then the following conditions are equivalent: \\
$(i)$ The line bundle $[D]$ is semi-positive. \\
$(ii)$ There exists a fibration $\pi\colon X\to R$ onto a compact Riemann surface $R$ such that $\pi^*\{p\} = aD$ as divisors on $X$ for some point $p\in R$ and some positive rational number $a$.
\end{proposition}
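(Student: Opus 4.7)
The implication $(ii)\Rightarrow(i)$ is the easier direction. Writing the positive rational $a$ as $a=k/\ell$ with $k,\ell$ positive integers, the equality $\pi^*\{p\}=aD$ of divisors gives $[kD]\cong \pi^*[\{p\}]^{\otimes\ell}$. Since $[\{p\}]$ is a line bundle of positive degree on the compact Riemann surface $R$ and hence positive by Kodaira's theorem, pulling back a $C^\infty$ positive metric by $\pi$ produces a semi-positive $C^\infty$ Hermitian metric on $[kD]$, whose $k$-th root is a semi-positive Hermitian metric on $[D]$.

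For the harder direction $(i)\Rightarrow(ii)$, the plan is to reduce to the setting of Theorem~\ref{thm:general}. Since $[mD]|_Y$ is holomorphically trivial, $c_1^{\mathbb{R}}([D]|_{Y_\nu})=0$ for every irreducible component $Y_\nu$, so $D$ is numerically trivial along $Y$ with respect to Kähler classes of $X$. I would then choose a $C^\infty$ Hermitian metric $h_D$ on $[D]$ with $\sqrt{-1}\Theta_{h_D}\ge 0$ whose associated function $\psi$ enjoys \textbf{Property $(\ast)$} (Remark~\ref{rmk:wlogwmaV0conn}) and apply Theorem~\ref{thm:general} to reach one of its two alternatives (a) or (b).

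In case (a), Lemma~\ref{lem:JDGlem36} directly supplies a fibration $\Phi\colon X\to R$ with $Y=\Phi^{-1}(q)$ for some $q\in R$. In case (b), $[D]|_V$ is unitary flat on some neighborhood $V$ of $Y$; the torsion hypothesis on $[mD]|_Y$ together with Lemma~\ref{lem:flatlb_funds}~$(i),(ii)$ forces the representation $\rho_{D|_Y}^m$ to be trivial, so the image of $\rho_{D|_Y}$ lies in the finite group of $m$-th roots of unity. Lemma~\ref{lem:scv_monodromy_onV}~$(i)$ then produces, after shrinking $V$, a proper surjective holomorphic $p\colon V\to\Delta$ with $p^*\{0\}=a'D$ for some positive rational $a'$, whose fibers $\Gamma_\tau:=p^{-1}(c_\tau)$ over regular values $c_\tau\in\Delta\setminus\{0\}$ form a smooth one-parameter family of pairwise disjoint, mutually $C^\infty$-isotopic, compact complex hypersurfaces in $V\setminus Y$. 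Neeman's argument, in the form used in the proof of Lemma~\ref{lem:JDGlem36} and mentioned in the remark following it, then globalizes this family to a fibration $\Phi\colon X\to R$ onto a compact Riemann surface such that each $\Gamma_\tau$ is a fiber.

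I expect the main technical point to be verifying, in case (b), that $Y$ itself equals a fiber of $\Phi$. By Remmert's proper mapping theorem, since $Y$ is connected and disjoint from the $\Gamma_\tau=\Phi^{-1}(q_\tau)$, the image $\Phi(Y)$ is a single point $q\in R$, so $Y\subset\Phi^{-1}(q)$. Every point of $V\setminus Y$ lies on some $\Gamma_\tau$, hence on $\Phi^{-1}(q_\tau)$ with $q_\tau\neq q$, so $\Phi^{-1}(q)\cap V=Y$; then $\Phi^{-1}(q)\cap(X\setminus V)$ is a closed subset disjoint from $Y$, and the connectedness of the fiber $\Phi^{-1}(q)$ forces it to be empty, giving $\Phi^{-1}(q)=Y$. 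Once $\Phi^{-1}(q)=Y$ in both cases, the effective divisor $\Phi^*\{q\}$ has support $Y$ and trivial first Chern class on each $Y_\nu$ (as each $Y_\nu$ is contracted to the point $q$), hence is numerically trivial along $Y$ with respect to Kähler classes of $X$. Applying Lemma~\ref{lem:from_linalg}~$(ii)$ to the pair $D$, $\Phi^*\{q\}$ produces positive integers $\alpha,\beta$ with $\alpha D=\beta\Phi^*\{q\}$, completing the proof by taking $\pi=\Phi$, $p=q$, and $a=\alpha/\beta$.
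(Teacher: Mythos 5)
Your treatment of $(ii)\Rightarrow(i)$ and the skeleton of $(i)\Rightarrow(ii)$ — numerical triviality along $Y$ deduced from the torsion hypothesis, an application of Theorem \ref{thm:general} (the paper routes this through Theorem \ref{thm:main_gen}, but your direct use of Theorem \ref{thm:general} is legitimate), case $(a)$ handled by Lemma \ref{lem:JDGlem36}, finiteness of the image of $\rho_{D|_Y}$, the local fibration $p\colon V\to\Delta$ from Lemma \ref{lem:scv_monodromy_onV}~$(i)$, and the final appeal to Lemma \ref{lem:from_linalg}~$(ii)$ applied to $D$ and $\Phi^*\{q\}$ — all match the paper's argument. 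The genuine gap is in case $(b)$, precisely at the point you yourself flag: showing $\Phi^{-1}(q)=Y$. Your claim that every point of $V\setminus Y$ lies on some $\Gamma_\tau$ is not justified as stated, since the $\Gamma_\tau$ are fibres of $p$ over \emph{regular} values only and nonzero critical values may occur (one must first shrink $\Delta$, using properness of $p$ and Remmert to see that the critical values form a discrete set); more seriously, the fibres of $p$ produced by Lemma \ref{lem:scv_monodromy_onV}~$(i)$ are not known to be connected, and the assertion that Neeman's globalization yields a $\Phi$ for which \emph{each} $\Gamma_\tau$ is literally a fibre is not what the cited argument (or the remark after Lemma \ref{lem:JDGlem36}) gives you. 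This is exactly what you need to conclude $q_\tau\neq q$: if $\Gamma_\tau$ were merely \emph{contained} in a fibre, that fibre could a priori also contain $Y$, and your clopen-separation argument for $\Phi^{-1}(q)=Y$ breaks down.

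The paper sidesteps this entirely. Having the local fibration $p$ with $Y=p^{-1}(0)$, it replaces $h_D$ by the metric corresponding (via Remark \ref{rmk:psi-h-corresp}, with the cut-off construction of Lemma \ref{lem:sp_local}) to a plurisubharmonic $\psi$ satisfying {\bf Property $(\ast)$} and $\psi=-\log|p|^2$ near $Y$; then ${\rm Re}\,p$ is a non-constant leafwise-constant function on the (Levi-flat) level sets, so condition $(a)$ of Theorem \ref{thm:general} holds for this new $\psi$, and Lemma \ref{lem:JDGlem36} applies verbatim, giving a fibration $\pi\colon X\to R$ with $Y=\pi^{-1}(p)$ as a \emph{stated} conclusion rather than something to be re-derived; the proof then finishes, as yours does, with Lemma \ref{lem:from_linalg}~$(ii)$. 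So the fix is either to adopt this metric-redefinition trick (recommended, since it collapses your case $(b)$ into the already-handled case $(a)$), or to supply the missing shrinking and connectedness (e.g.\ Stein factorization of $p$) arguments needed to make your direct verification that $Y$ is a fibre watertight.
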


\begin{proof}
First let us show the implication $(ii)\implies (i)$. 
Take a fibration $\pi\colon X\to R$, a point $p\in R$, and a positive rational number $a$ as in condition $(ii)$. 
As the line bundle $[\{p\}]$ is of degree one, we have that it is ample. 
Therefore, by considering the pull-back of the Fubini--Study metric by the embedding map into a projective space, one can construct a $C^\infty$ Hermitian metric $h_p$ such that $\sqrt{-1}\Theta_{h_p}>0$ (see \cite[\S 3]{Demailly} for the details). As the metric $h_D := (\pi^*h_p)^{1/a}$ on $[D]$ satisfies $\sqrt{-1}\Theta_{h_D}=\textstyle\frac{1}{a}\pi^*\sqrt{-1}\Theta_{h_p} \geq 0$, $[D]$ is semi-positive. 

Next let us show the converse. 
Assume that $[D]$ is semi-positive. 
As $[mD]|_Y$ is topologically trivial for some $m>0$, it follows that $c_1^{\mathbb{R}}([D]|_Y) = \textstyle\frac{1}{m}c_1^{\mathbb{R}}([mD]|_Y)=0$. 
Therefore we have that $D$ is numerically trivial along $Y$ with respect to Kähler classes of $X$, and that ${\rm nd}(D) = 1$, by virtue of Lemma \ref{lem:num1_flatrestr}. 
Thus, by Theorem \ref{thm:main_gen}, 
$[D]|_V$ is unitary flat for some neighborhood $V$ of $Y$. 
In what follows we use the notation in \S \ref{subsection:semilocal_nbhd_Y_Dflat} such as $\mathcal{F}$ and $\rho_{D|_Y}$. 

From the assumption and Lemma \ref{lem:flatlb_funds} $(ii)$ (and Remark \ref{rmk:inj_flatlb_hollb}), we have that $(\rho_{D|_Y})^m = \rho_{mD|_Y}$ is the trivial representation for a positive integer $m$. Therefore the image of $\rho_{D|_Y}$ is a finite subgroup of ${\rm U}(1)$. 
By Lemma \ref{lem:scv_monodromy_onV} $(i)$, by shrinking $V$ if necessary, there exists a proper surjective holomorphic map $p\colon V \to \Delta$ onto a disc $\Delta \subset \mathbb{C}$ centered at the origin such that $Y = p^{-1}(0)$. 
From the same argument as in the proof of Lemma \ref{lem:sp_local}, 
one can construct a $C^\infty$ plurisubharmonic function $\psi\colon X\setminus Y\to \mathbb{R}$ with {\bf Property $(\ast)$} such that $\psi|_{W\setminus Y} = -\log |p|^2$ holds for a neighborhood $W$ of $Y$ in $V$. 
In what follows, as an Hermitian metric on $[D]$, we use $h_D$ that is constructed from $\psi$ by the argument in Remark \ref{rmk:psi-h-corresp}. 
Then, by considering the restriction of the function ${\rm Re}\,p$ for example, we have that condition $(a)$ of Theorem \ref{thm:general} holds. 
Therefore it follows from Lemma \ref{lem:JDGlem36} that there exists a fibration $\pi\colon X \to R$ onto a compact Riemann surface $R$ such that $Y=\pi^{-1}(p)$ for some $p\in R$. 
Assertion $(ii)$ follows by applying Lemma \ref{lem:from_linalg} $(ii)$ to the divisors $D$ and $D':=\pi^*\{p\}$. 
\end{proof}

From Proposition \ref{prop:_maeno_main_tor}, one can deduce Theorem \ref{thm:main_tor} as follows. 

\begin{proof}[Proof of Theorem \ref{thm:main_tor}]
Let $X$, $Y$, $D$ be as in Theorem \ref{thm:main_tor}. 
First let us show the implication $(ii)\implies (i)$. 
Take a fibration $\pi\colon X\to R$ and a divisor $D_R$ as in $(ii)$. 
As $D\not=0$ and $D_R$ is an effective divisor on a compact Riemann surface, it follows that the degree of $D_R$ is positive. 
Therefore the semi-positivity of $[D]$ follows by the same argument as in the proof of Proposition \ref{prop:_maeno_main_tor} $(ii)\implies (i)$. 

Next let us show assertion $(ii)$ by assuming that $[D]$ is semi-positive. 
Let $D = D_1 + D_2 + \cdots + D_M$ be the decomposition as in Remark \ref{rmk:wlogwmaYconn}. Then, by the fact mentioned in this remark, it follows that $[D_1]$ is semi-positive. 
Therefore it follows from Proposition \ref{prop:_maeno_main_tor} that there exists a fibration $\pi\colon X\to R$ onto a compact Riemann surface $R$ such that $D_1 = a_1\pi^*\{p_1\}$ as divisors on $X$ for some point $p_1\in R$ and some positive rational number $a_1$. 
By the argument as in the proof of Lemma \ref{lem:JDGlem36}, it follows from Remmert's proper mapping theorem that $\pi({\rm Supp}\,D_\mu) = \{p_\mu\}$ holds for some points $p_\mu \in R$ for each $\mu \in\{2, 3, \dots, M\}$. 
Applying the argument as in the proof of Proposition \ref{prop:_maeno_main_tor}, one can deduce from Lemma \ref{lem:from_linalg} $(ii)$ that $D_\mu = a_\mu\pi^*\{p_\mu\}$ holds for some positive rational numbers $a_\mu$ for each $\mu$. 
Taking a positive integer $m'$ such that $m' a_\mu\in \mathbb{Z}$ and letting 
$D_R := m' (a_1 \{p_1\} + a_2 \{p_2\} + \cdots +a_M\{p_M\})$, we have the assertion. 
\end{proof}

\subsection{Proof of Theorem \ref{cor:main}}
When $D$ is not numerically trivial along $Y$ with respect to Kähler classes of $X$, 
the Hartogs type extension theorem holds on $X\setminus Y$ by virtue of \cite[Theorem 0.2]{O20122} (also refer to \cite[Theorem 3.1]{F2025}). 
Note that this fact can also be shown by applying our argument concerning Demailly--Nemirovski--Fu--Shaw type functions and a more fundamental Hartogs type result due to Ohsawa \cite{O2012} (see also \cite[\S 5.2.2]{Obook}) as follows: 
As $[D]$ is semi-positive and $D$ is not numerically trivial along $Y$ with respect to Kähler classes of $X$, we have 
\begin{equation}\label{eq:prothm14positive}
(D^2, \{\omega\}^{n-2})=\sum_\nu m_\nu \int_{Y_\nu} c_1(D)\wedge \{\omega\}^{n-2} > 0, 
\end{equation}
where $m_\nu$ is the coefficient of $Y_\nu$ in the irreducible decomposition of $D$. 
Let $\psi$ be the function as in \S\ref{subsection:standing_seting} which which is constructed from an Hermitian metric with semi-positive Chern curvature. 
Without loss of generality, we may assume that $\psi$ satisfies {\bf Property $(\ast)$} (see Remark \ref{rmk:wlogwmaV0conn}). 
Then, by applying Corollary \ref{cor:key} $(iii)$ with $k=1$, from (\ref{eq:prothm14positive}) we have that the Levi form of $\{\psi = s\}$ has at least $1$ positive eigenvalue at some point of $\{\psi = s\}$ for $s\gg 1$. Thus the Hartogs type extension theorem on $X\setminus Y$ follows from \cite[Theorem 0.1]{O2012}. 

Next let us consider the case where $D$ is numerically trivial along $Y$  with respect to Kähler classes of $X$. 
Then it follows from Lemma \ref{lem:num1_flatrestr} and 
Theorem \ref{thm:main_gen} that $[D]|_V$ is unitary flat for some neighborhood $V$ of $Y$. 
Let $\rho_{D|_Y}$ be the representation as in \S \ref{subsection:semilocal_nbhd_Y_Dflat}. 
When its image is a finite subgroup of ${\rm U}(1)$, from Proposition \ref{prop:_maeno_main_tor} it follows that assertion $(i)$ holds, since $[mD]|_Y$ is holomorphically trivial for some positive integer $m$ in this case. 
When the image of $\rho_{D|_Y}$ is infinite, the Hartogs type extension theorem on $X\setminus Y$ clearly holds, because $\mathcal{O}_X(V\setminus Y)=\mathbb{C}$ holds for a sufficiently small open connected neighborhood $V$ of $Y$ by Lemma \ref{lem:scv_monodromy_onV} $(ii)$ in this case. 

Finally, let us show that conditions $(i)$ and $(ii)$ cannot be satisfied simultaneously. 
Assume that there exists a fibration $\pi\colon X \to R$ and a point $p\in R$ as in $(i)$. 
On a coordinate open disc $\Delta \subset R$ centered at $0$, take a holomorphic function $f \in \mathcal{O}_R(\Delta)$ that cannot be analytically continued across any point of the boundary of $\Delta$. 
Then $f\circ \pi$ cannot be an element of the image of the restriction map $\mathcal{O}_X(X\setminus Y) \to \mathcal{O}_X(\pi^{-1}(\Delta)\setminus Y)$, from which the assertion follows. 
\qed


\section{Examples and discussions}

\subsection{Examples}
In this subsection, we discuss several examples concerning the semi-positivity of the line bundle $[D]$ associated with an effective divisor $D$ on a projective complex manifold $X$.

Note first that if $[D]$ is semi-positive, then it is nef, but the converse does not hold in general. 
The first example of a nef but non--semi-positive line bundle seems to have been discovered by Yau \cite[p. 228]{Yau} (also refer to the explanation around \cite[Theorem 2.1]{T}). 
The same counterexample was later rediscovered by Demailly, Peternell, and Schneider \cite[Example 1.7]{DPS94}, who in addition proved that in this example $c_1(L)$ contains only one closed positive current. 
This phenomenon was subsequently interpreted within the framework of Ueda theory and generalized in \cite{Ko2015} (see also \cite[\S7.1]{Ko2024}). 
Note that their counterexample $(X, Y, D)$ satisfies that $D=Y$, $[Y]|_Y$ is holomorphically trivial, and that $X\setminus Y$ is a Stein manifold. Therefore, by applying Theorem \ref{thm:main_tor}, 
we can give another proof of the non-semi-positivity of $[D]$ in this example as follows: 
Condition $(ii)$ of Theorem \ref{thm:main_tor} does not hold for this example, since otherwise one can construct a compact complex curve in $X\setminus Y$ by considering a fiber of the fibration, which contradicts the Steinness of $X\setminus Y$. 
Therefore from this theorem it follows that $[D]$ is not semi-positive. 

Even when $[D]$ is semi-positive, it does not necessarily admit a real-analytic Hermitian metric $h_D$ with $\sqrt{-1}\Theta_{h_D} \ge 0$.
Indeed, a counterexample was given by Brunella \cite{B} by blowing up $\mathbb{P}^2$ at nine points in very general position and by considering its anti-canonical bundle. 
Since such rational surface examples form the fundamental motivation for the present study, we recall their setting in some detail. 
Let $C \subset \mathbb{P}^2$ be a smooth cubic curve, and $p_1, p_2, \dots, p_9 \in C$ be nine regular points of $C$.
Let $X$ be the blow-up of $\mathbb{P}^2$ at these nine points, and denote by $Y$ the strict transform of $C$. 
We set $D := Y$. 
Note that one has $K_X \cong [-D]$. 

In this example, when $Y$ is non-singular, the divisor $D$ is nef. 
Brunella first obtained a characterization of the semi-positivity of $[D]$ in this situation in \cite{B}, whose generalization is given in \cite[Corollary 1.5]{Ko2024}, which is further generalized as Theorems \ref{thm:main_gen} and \ref{thm:main_vgeneral} in the present paper. 
In this example with non-singular $Y$, a sufficient condition for the validity of $(iii)$ in Theorem \ref{thm:main_gen} can be described in terms of the dynamical and irrational-theoretical behavior of the sequence $\{[mY]|_Y\}_{m\in\mathbb{Z}}$ in the Picard group of the elliptic curve $Y$ \cite{A, U} (see also \cite{Ko2020, GS1, GS2, Og} for the generalizations). 
In \cite{Ko2023}, this sufficient condition was further relaxed, though it still remains open whether $[D]$ is necessarily semi-positive in this example, see also \cite{De2015}. 
Note that the higher-dimensional analogue of the above phenomenon can be observed in the blow-up of a del Pezzo manifold of degree $1$ at one point, as shown in \cite{Ko2020}. 

When the cubic curve $C$ is singular in this example, the nefness and semi-positivity of $[D]$ have been completely characterized in \cite{Ko2017} and \cite{Ko2023} (Note that, in this singular case, examples of nef but non–semi-positive $[D]$ are also known to exist). 

\subsection{Torsion-type assumption in Theorem \ref{thm:main_tor} and connection with the abundance conjecture}\label{section:obs_abundance}

Let $X$ be an $n$-dimensional compact Kähler manifold. 
A holomorphic line bundle $L$ on $X$ is said to be \emph{$\mathbb{Q}$-effective} 
if there exists a positive integer $\ell$ such that 
$\Gamma(X, \mathcal{O}_X(L^{\ell})) \neq 0$. 
With this terminology, 
Theorems \ref{thm:main_gen} and \ref{thm:main_tor} can be regarded as results concerning 
$\mathbb{Q}$-effective nef line bundles $L$ on compact Kähler manifolds $X$ 
with ${\rm nd}(L)=1$. 
In particular, Theorem \ref{thm:main_tor} may be regarded as a result giving a sufficient condition, formulated in terms of a torsion-type property of the associated divisor, for the equivalence between the semi-positivity and semi-ampleness of such a line bundle on $X$. 

In this section, we discuss how the sufficient condition mentioned above arises naturally from the viewpoint of the abundance conjecture for the case where $L=K_X$ is $\mathbb{Q}$-effective and ${\rm nd}(K_X)=1$, under some technical assumptions for simplicity. 

\begin{remark}\label{rmk:qeffopen}
It is worth recalling the relation between the \emph{non-vanishing conjecture} and the abundance conjecture when ${\rm nd}(K_X)=1$.
Roughly speaking, the present study of the abundance conjecture proceeds in two steps: proving non-vanishing and then establishing semi-ampleness under that assumption. 
For non-vanishing, the case of numerical dimension one was studied in detail in \cite[\S 6]{LP}, and the results there have been generalized to Kähler varieties in \cite{HLL}. For the other part, 
we refer to \cite[Theorem 1.4]{Bi}, \cite{GM}, \cite{HLL}, and \cite{LX}. 
While various effective approaches and results have been obtained in this direction as above, 
it seems that most of the known results have been obtained by induction on the dimension. 
The inductive arguments concern the validity of the conjectures for 
\emph{all} varieties in a given dimension, rather than for individual cases. 
Therefore, even if a given manifold $X$ satisfies ${\rm nd}(K_X)=1$ and $K_X$ is $\mathbb{Q}$-effective, 
it still seems to remain unclear whether one can conclude the semi-ampleness of $K_X$, except for the cases settled in low dimensions \cite{LX}, particularly in higher dimensions and in the Kähler setting. 
\end{remark}

Let $X$ be a connected projective complex manifold such that $K_X$ is nef, $\mathbb{Q}$-effective, and ${\rm nd}(K_X)=1$. 
Take a positive integer $\ell$ and an effective divisor $D$ such that $K_X^{\ell}=[D]$. 

First let us consider the case where the conclusion of the abundance conjecture holds: i.e., when $K_X$ is semi-ample. 
In this case, as ${\rm nd}(K_X)=1$, one obtains a fibration $\pi \colon X \to R$ onto a compact Riemann surface $R$ and an ample divisor $D_R$ of $R$ such that $\pi^*D_R = m'D$ holds for some positive integer $m'$, as in Theorem \ref{thm:main_tor} (ii), since the Kodaira dimension is one (see \cite[Chapter V \S 2.a]{Nakayama04}). 
Then, as $Y:={\rm Supp}\,D$ is the union of some fibers of $\pi$, we have that $[m'D]|_Y$ is holomorphically trivial. 

Next, let us consider the situation without assuming that $K_X$ is semi-ample. 
In what follows, for simplicity, assume that the support $Y$ of $D$ is connected and non-singular, so that the \emph{adjunction formula} $(K_X\otimes [Y])|_Y \cong K_Y$ holds. 
Note that $D=bY$ holds for a positive integer $b$, and that $c_1^{\mathbb{R}}([Y]|_Y)=0$ holds in this case (see Remark \ref{rmk_torsion_normalbundle}). As
\[
K_Y^\ell \cong (K_X^\ell\otimes [\ell Y])|_Y = [D+\ell Y]|_Y=[(b+\ell)Y]|_Y, 
\]
we have 
\[
c_1^{\mathbb{R}}(K_Y) = \frac{b+\ell}{\ell}c_1^{\mathbb{R}}([Y]|_Y) = 0, 
\]
namely ${\rm nd}(K_Y) = 0$. In this case, it is known that $K_Y$ is $\mathbb{Q}$-effective (the solution of the abundance conjecture in the case of numerical dimension zero \cite[Chapter V]{Nakayama04}). 
Therefore one can take a positive integer $\lambda$ such that $K_Y^\lambda$ is holomorphically trivial. 
Thus, by letting $m:=\lambda (b+\ell)$, we have that 
\[
[mD]|_Y = [\lambda b(b+\ell)Y]|_Y \cong K_Y^{\lambda b\ell}
\]
is also holomorphically trivial. 

Although the above discussion was restricted to the case where $Y$ is connected and non-singular, 
we believe that it suggests that the torsion-type assumption in Theorem \ref{thm:main_tor} arises quite naturally from the viewpoint of the abundance conjecture. 

\subsection{Application}

In view of the study of the abundance conjecture as mentioned in Remark \ref{rmk:qeffopen}, 
it seems to be natural to expect that one can obtain applications of our main results by combining them with non-vanishing type results. In this subsection, let us consider such an application by combining them with the following theorem due to Höring, Lazić, and Lehn: 
\begin{theorem}[= a part of {\cite[Theorem C]{HLL}}, when $X$ is non-singular]\label{thm:HLL}
Let $X$ be a compact Kähler manifold with $c_1(X)=0$, 
and $L$ be a nef line bundle on $X$ with ${\rm nd}(L)=1$. 
If $\chi(X, \mathcal{O}_X)\not=0$, then there exists an effective divisor $D$ on $X$ such that $c_1^{\mathbb{R}}(D) = c_1^{\mathbb{R}}(L^m)$ holds for some positive integer $m$. 
\end{theorem}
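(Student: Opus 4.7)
My plan is to reduce the conclusion to a non-vanishing statement: it suffices to show that $h^0(X, L^m) > 0$ for some positive integer $m$, since any nonzero section of $L^m$ defines an effective divisor $D$ with $[D] \cong L^m$, and hence $c_1^{\mathbb{R}}(D) = c_1^{\mathbb{R}}(L^m)$.

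The first step would be to apply the Hirzebruch--Riemann--Roch formula. Because ${\rm nd}(L) = 1$, all classes $c_1^{\mathbb{R}}(L)^{\wedge k}$ vanish in cohomology for $k \geq 2$, so the expansion of $\exp(mc_1(L)) \cdot {\rm td}(X)$ collapses to
\[
\chi(X, L^m) = \chi(X, \mathcal{O}_X) + m \int_X c_1(L) \wedge {\rm td}_{n-1}(X),
\]
which is a polynomial in $m$ of degree at most one whose constant term $\chi(X, \mathcal{O}_X)$ is nonzero by hypothesis. Hence $\chi(X, L^m) \neq 0$ for all but at most one positive integer $m$.

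Next I would dispose of the top cohomology using Serre duality. Since $c_1(X) = 0$ forces $K_X$ to be numerically trivial, we have $h^n(X, L^m) = h^0(X, K_X \otimes L^{-m})$; the class $-mc_1^{\mathbb{R}}(L)$ is the negative of a nonzero nef class, so for $m > 0$ it lies outside the pseudo-effective cone (by standard duality between the nef and pseudo-effective cones on compact Kähler manifolds), and hence $h^n(X, L^m) = 0$.

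The main obstacle is to bound the intermediate cohomology groups $h^q(X, L^m)$ for $0 < q < n$ sublinearly in $m$, so that the Euler characteristic computation above forces $h^0(X, L^m) > 0$ for $m \gg 1$. My approach here would be to pass to a finite étale cover via the Beauville--Bogomolov--Yau decomposition, after which $X$ splits as a product of a complex torus, strict Calabi--Yau manifolds, and irreducible holomorphic symplectic manifolds. The assumption $\chi(X, \mathcal{O}_X) \neq 0$ eliminates any positive-dimensional torus factor, since $\chi$ is multiplicative under products and vanishes on tori, so the problem splits among the remaining factors. On strict Calabi--Yau factors the Hodge-theoretic rigidity of $\mathcal{O}_X$ should permit an induction on dimension using a fibration extracted from $L$; on irreducible holomorphic symplectic factors the Beauville--Bogomolov quadratic form, combined with the fact that the nef class $c_1(L)$ is forced to be isotropic when ${\rm nd}(L) = 1$, should produce a Lagrangian fibration via a Matsushita-type theorem, again reducing to a lower-dimensional instance of the same problem. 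Implementing this inductive scheme, and in particular handling the irreducible holomorphic symplectic case carefully, is where the real difficulty lies.
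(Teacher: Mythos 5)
First, a point of reference: the paper does not prove Theorem \ref{thm:HLL} at all --- it is imported verbatim from \cite[Theorem C]{HLL} (restricted to non-singular $X$), the only added observation being that the non-uniruledness hypothesis of \cite{HLL} can be dropped via \cite[Theorem 1.1]{Ou} because $c_1(X)=0$. So you are attempting to reprove a quoted black-box result; what the paper itself supplies at this point is only the citation together with that remark.

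As a proof attempt, your proposal has a genuine gap, which you partly acknowledge: the intermediate cohomology. The first steps are fine (Hirzebruch--Riemann--Roch collapses to $\chi(X,L^m)=\chi(X,\mathcal{O}_X)+m\int_X c_1(L)\wedge {\rm td}_{n-1}(X)$ since ${\rm nd}(L)=1$, and $h^n(X,L^m)=h^0(X,K_X\otimes L^{-m})=0$ because $-m\,c_1^{\mathbb{R}}(L)$ cannot be pseudoeffective for a nonzero nef class --- though the clean justification is pairing with $\omega^{n-1}$ rather than cone duality). But the mechanism you propose for the crux cannot work as stated. After passing to the Beauville--Bogomolov cover and excluding torus factors via $\chi\neq 0$, the class concentrates on a single simply connected factor (no $H^1$, and two nonzero nef components on distinct factors would give ${\rm nd}\geq 2$). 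On an irreducible holomorphic symplectic factor of dimension $2k$, a nonzero nef isotropic class has numerical dimension $k$ by the Fujiki relations and Verbitsky's theorem, so ${\rm nd}=1$ forces $k=1$, i.e.\ a K3 surface, where nonvanishing is classical; in particular no Matsushita/SYZ-type statement (which is an open conjecture in general) is needed or available. The genuinely hard remaining case is an even-dimensional strict Calabi--Yau factor, and there your strategy breaks: Serre duality with trivial canonical bundle gives $\chi(X,L^{-m})=(-1)^n\chi(X,L^m)$, which for even $n$ forces the linear term $\int_X c_1(L)\wedge{\rm td}_{n-1}(X)$ to vanish, so $\chi(X,L^m)\equiv\chi(X,\mathcal{O}_X)$ is constant in $m$. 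A sublinear (indeed any) bound on $h^q(X,L^m)$, $0<q<n$, then cannot force $h^0(X,L^m)>0$, since a constant nonzero Euler characteristic is perfectly compatible with $h^0=0$ and nonvanishing intermediate cohomology; likewise ``extracting a fibration from $L$'' on such a factor is essentially the statement to be proved. Closing this case is exactly the content of \cite{HLL}, and your proposal supplies a plausible reduction but not the core argument.
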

Note that this theorem is stated more generally for $\mathbb{Q}$-factorial compact Kähler klt varieties in \cite{HLL}. 
Note also that the assumption that $X$ is not uniruled, which is originally assumed in \cite[Theorem C]{HLL}, is dropped here by virtue of \cite[Theorem 1.1]{Ou}, as we restrict ourselves to the case of non-singular $X$ here. For example, an even-dimensional Calabi--Yau manifold $X$ in the strict sense satisfies the conditions in this theorem, since $K_X$ is holomorphically trivial and $\chi(X, \mathcal{O}_X)=2$ holds for such $X$. 

By applying Theorem \ref{thm:main_gen} with Theorem \ref{thm:HLL}, we have the following: 
\begin{corollary}\label{cor:CYleviflat}
Let $X$ be a compact Kähler manifold with $c_1(X)=0$ such that $\chi(X, \mathcal{O}_X)\not=0$. 
Assume that there exists a semi-positive holomorphic line bundle $L$ with ${\rm nd}(L)=1$. 
Then $X$ admits a real-analytic one-parameter family of Levi-flat hypersurfaces. 
\end{corollary}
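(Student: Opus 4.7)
The plan is to reduce Corollary \ref{cor:CYleviflat} to Theorem \ref{thm:main_gen} by manufacturing, out of the given line bundle $L$, an effective divisor $D$ whose associated line bundle $[D]$ is semi-positive with $\mathrm{nd}([D])=1$, and then to read off the desired family from condition $(iii)$ of that theorem together with the remark immediately following it.

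First I would apply Theorem \ref{thm:HLL} to $L$ (which is nef, as it is semi-positive, and satisfies $\mathrm{nd}(L)=1$) to obtain a positive integer $m$ and an effective divisor $D$ satisfying $c_1^{\mathbb{R}}([D])=c_1^{\mathbb{R}}(L^m)$. The next step is to promote this numerical equality to a genuine curvature statement: the line bundle $N:=[D]\otimes L^{-m}$ has vanishing real first Chern class, so by the $\ddbar$-lemma on the compact K\"ahler manifold $X$ it admits a $C^\infty$ Hermitian metric with identically zero Chern curvature. By Lemma \ref{lem:flatlb_funds} $(iii)$ this exhibits $N$ as unitary flat; tensoring such a flat metric on $N$ with the $m$-th tensor power of a semi-positive Hermitian metric on $L$ then produces a semi-positive Hermitian metric on $[D]=L^m\otimes N$. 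Since $\mathrm{nd}([D])=m\cdot\mathrm{nd}(L)=1$ and $[D]$ is nef and effective, Theorem \ref{thm:main_gen} applies.

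Condition $(iii)$ of Theorem \ref{thm:main_gen} then yields a covering $\{V_j\}$ of $Y:=\mathrm{Supp}\,D$ by open subsets of $X$ and holomorphic functions $s_j\colon V_j\to \mathbb{C}$ vanishing precisely on $V_j\cap Y$, with $|s_j/s_k|\equiv 1$ on each overlap. For every sufficiently small $\ve>0$ the loci $\{x\in V_j \mid |s_j(x)|=\ve\}$ glue consistently into a compact real hypersurface $\Sigma_\ve\subset X\setminus Y$. Each $\Sigma_\ve$ is Levi-flat, because on each $V_j$ it is foliated by the complex hypersurfaces $\{s_j=c\}$ with $|c|=\ve$, and its local defining function $|s_j|^2-\ve^2$ is jointly real-analytic in the space variable and in the parameter; hence the family $\{\Sigma_\ve\}_{\ve\in(0,\ve_0)}$ is a real-analytic one-parameter family of pairwise disjoint Levi-flat hypersurfaces in $X$, as required.

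The principal obstacle is the curvature upgrade in the second step: Theorem \ref{thm:HLL} only guarantees a numerical equivalence between $[D]$ and $L^m$, and it is not a priori clear that one can turn this into semi-positivity of $[D]$, which is exactly what Theorem \ref{thm:main_gen} requires as input. This is precisely where the Hodge-theoretic $\ddbar$-lemma enters, furnishing a flat metric on $N$ that bridges the two theorems; once this link is established, the remainder of the argument is essentially formal and consists only of unwinding the geometric content of condition $(iii)$.
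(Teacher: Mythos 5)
Your proposal is correct and follows essentially the same route as the paper: Theorem \ref{thm:HLL} produces the effective divisor, the twist by the numerically trivial bundle $N=[D]\otimes L^{-m}$ (your $\ddbar$-lemma argument is exactly the Kashiwara-type flatness step the paper invokes) transfers semi-positivity from $L^m$ to $[D]$, and then Theorem \ref{thm:main_gen}~$(iii)$ with the level sets $\bigcup_j\{|s_j|=\ve\}$ gives the real-analytic family of Levi-flat hypersurfaces. Only a trivial slip: ${\rm nd}([D])={\rm nd}(L^m)={\rm nd}(L)=1$ (numerical dimension is unchanged under positive tensor powers), not $m\cdot{\rm nd}(L)$, but this does not affect the argument.
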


\begin{proof}
By Theorem \ref{thm:HLL} and Kashiwara's theorem, 
we have a unitary flat line bundle $F$ and an effective divisor $D$ on $X$ such that $L^m$ is holomorphically isomorphic to $F\otimes [D]$ for some positive integer $m$. 
As $L^m$ is semi-positive, $[D]\cong L^m\otimes F^{-1}$ is also semi-positive. 
Therefore, by Theorem \ref{thm:main_gen}, one can take a covering $\{V_j\}$ of $Y:={\rm Supp}\,D$ by open subsets of $X$ and holomorphic functions $s_j\colon V_j \to \mathbb{C}$ whose zero sets coincide with $V_j\cap Y$, such that $|s_j/s_k|\equiv 1$ on each intersection $V_j\cap V_k$. 
The assertion then follows by considering the family of the hypersurfaces defined by $\textstyle\bigcup_j\{x\in V_j \mid |s_j(x)| = \varepsilon\}$ for $0<\ve\ll 1$. 
\end{proof}

Note that, for a Calabi--Yau manifold $X$, 
a widely expected conjecture states that any nef line bundle $L$ on $X$ is
numerically semi-ample; that is, there exists a semi-ample line bundle $L'$ on
$X$ such that $c_1^{\mathbb{R}}(L) = c_1^{\mathbb{R}}(L')$ 
(see, for example, \cite[Conjecture 1.1]{LOP} and \cite[Conjecture 3.8]{T}; see also \cite{FT1, FT2} and the explanation around \cite[Theorem~3.9]{T}, where it is shown that this conjecture fails if the line bundle is replaced with a $(1,1)$-class).  
If this conjecture holds, then Corollary \ref{cor:CYleviflat} would follow immediately.



\end{document}